\newtheorem{teo}{Theorem}[section]
\newtheorem{lem}[teo]{Lemma}
\newtheorem{cor}[teo]{Corollary}
\newtheorem{prop}[teo]{Proposition}
\newtheorem{ques}[teo]{Question}
\newtheorem{remark}[teo]{Remark}
\newcommand{\Aa}{{\mathcal A}}
\newcommand{\Dd}{{\mathcal D}}
\newcommand{\Ff}{{\mathcal F}}
\newcommand{\Pp}{{\mathcal P}}
\newcommand{\Qq}{{\mathcal Q}}
\newcommand{\Z}{{\mathbb Z}}
\newcommand{\X}{{\mathbb X}}
\newcommand{\D}{{\mathbb D}}
\newcommand{\N}{{\mathbb N}}
\newcommand{\F}{{\mathbb F}}
\DeclareMathOperator{\br}{br}
\def\Dim{\noindent\emph{Proof. }}
\def\cvd{\hfill$\Box$}
\newcommand{\compl}{\mathsf{C}}
\DeclareMathOperator{\lk}{lk}
\theoremstyle{definition}
\newtheorem{quest}[teo]{Question}
\begin{document}

\title[]{Alexander quandle lower bounds for link genera}

\author{R. Benedetti, R. Frigerio}

\email{benedett@dm.unipi.it, frigerio@dm.unipi.it}

\address{Dipartimento di Matematica \\
Universit\`a di Pisa \\
Largo B.~Pontecorvo 5 \\
56127 Pisa, Italy}

\subjclass[2000]{57M25}

\keywords{Alexander quandle, quandle colorings, Alexander ideals, genus, tunnel number}


\begin{abstract} 
Every finite field $\F_q$, $q=p^n$, carries several {\it Alexander
    quandle} structures $\X=(\F_q,*)$. We denote by $\Qq_\Ff$
the family of these quandles, where $p$ and $n$ vary respectively among the odd primes and 
the positive integers.

For every $k$--components oriented link $L$, every
  partition $\Pp$ of $L$ into $h:= |\Pp|$ sublinks, and every
  labelling $\overline{z}\in\N^h$ of such a partition, the number of
  $\X$--colorings of any diagram of $(L,\overline{z})$ is a
  well--defined invariant of $(L,\Pp)$, of the form
  $q^{a_\X(L,\Pp,\overline{z})+1}$ for some natural number
  $a_\X(L,\Pp,\overline{z})$.  
Letting $\X$ and $\overline{z}$ vary
  respectively in $\Qq_\Ff$ and among the labellings of $\Pp$, we
  define the derived invariant $\Aa_\Qq(L,\Pp) := \sup
  \{a_\X(L,\Pp,\overline{z}) \}$.  

If $\Pp_M$ is such that
  $|\Pp_M|=k$, we show that $\Aa_\Qq(L,\Pp_M)\leq t(L)$, where $t(L)$ is
  the tunnel number of $L$, generalizing a result by Ishii. If $\Pp$
  is a {\it ``boundary partition''} of $L$ and $g(L,\Pp)$ denotes the
  infimum among the sums of the genera of a system of disjoint Seifert
  surfaces for the $L_j$'s, then we show that $\Aa_\Qq(L,\Pp)\leq
  2g(L,\Pp)+2k - |\Pp|-1 $. 
We point out further properties of
  $\Aa_\Qq(L,\Pp)$, mostly in the case of
  $\Aa_\Qq(L):=\Aa_\Qq(L,\Pp_m)$, $|\Pp_m|=1$. By elaborating on a
  suitable version of a result by Inoue, we show that when $L=K$ is a
  knot then $\Aa_\Qq(K)\leq \Aa(K)$, where $\Aa(K)$ is the breadth of
  the Alexander polynomial of $K$. However, for every $g\geq 1$ we
  exhibit examples of genus--$g$ knots having the same Alexander
  polynomial but different quandle invariants $\Aa_\Qq$. Moreover, in such examples $\Aa_\Qq$
  provides sharp lower bounds for the 
  genera of the knots. On the other hand, we show that $\Aa_\Qq(L)$ can give better
  lower bounds on the genus than $\Aa(L)$, when $L$ has $k\geq 2$ components.

We show that in order to compute $\Aa_\Qq(L)$ it is enough to
  consider only colorings with respect to the constant labelling $\overline{z}=1$.
In the case when $L=K$ is a knot, if either $\Aa_\Qq(K)=\Aa(K)$ or
  $\Aa_\Qq(K)$ provides a sharp lower bound for the knot genus, or
  if $\Aa_\Qq(K)=1$, then $\Aa_\Qq(K)$ can be realized by means of the
  proper subfamily of quandles $\{ \X=(\F_p,*)\}$, where $p$ varies among the odd primes.
\end{abstract}

\maketitle


\section{Introduction}\label{intro} 

A {\it quandle} $\X=(X,*)$ is a non-empty set $X$ with a
binary operation $\ast$ satisfying the following axioms: 
\smallskip
\begin{enumerate}
\item[(Q1)]
$a * a=a$ for every $a\in X$;
\item[(Q2)]
$(a * b) * c= (a * c) * (b * c)$ for every $a,b,c\in X$;
\item[(Q3)]
for every $b\in X$, 
the map $S_b\colon X\to X$ defined by $S_b(x)= x * b$ is a bijection.
\end{enumerate}

\smallskip

Every set $X$ admits the {\it trivial quandle} structure $\X(0)$ with
the operation defined by $a *^0 b=a$ for every $a,b\in X$.  Given a quandle
$\X=(X,*):= (X,*^1):=\X(1)$, for every integer $n>1$ one can define
another quandle $\X(n):=(X,*^n)$, where for every $a,b \in X$ one sets
$a*^nb=(a*^{n-1}b)*b$.
Every finite quandle $\X$ has a well defined
{\it type} $t_\X\geq 1$, such that $\X(n)=\X(m)$ if and only if $m=n$
mod $(t_\X)$.

\subsection{Quandle colorings}
Let $K\subset S^3$ be an oriented (smooth or PL) knot.  The
\emph{fundamental quandle} of $K$ was defined independently by
Joyce~\cite{joyce} and Matveev~\cite{matveev}. They also showed that
the fundamental quandle is a classifying invariant of knots.  If $\X$
is a {\it finite} quandle, then for every natural number $z\geq 0$ one
can define the invariant $c_{\X}(K,z)\in \N$ which counts the
representations of the fundamental quandle of $K$ in $\X(z)$.  It
turns out that $c_{\X}(K,z)$ can be computed as the number of suitably
defined $\X(z)$-{\it colorings} of any diagram $D$ of $K$. In order to
simplify the notation, we denote by $(K,z)$ a knot labelled by a
natural number $z$. Any label of $K$ obviously defines a label on
every diagram of $K$, and if $(D,z)$ is any diagram of $(K,z)$, then
we define a $\X$-coloring of $(D,z)$ to be a $\X(z)$-coloring of $D$.
Of course, if $\X$ has type $t_\X\geq 1$, then we may (and we will)
actually consider $\Z_{t_\X}$-valued (rather than $\N$-valued) labels,
where we understand that, for every $j\geq 2$, we identify
$\Z_j=\Z/j\Z$ with the set of canonical representatives
$\{0,\ldots,j-1\}$.
The definition of $c_\X(K,z)$ easily extends to the case of oriented
labelled links. In fact, let $L=K_1\cup\ldots\cup K_k$ be an oriented
link with $k$ components, and let $\Pp=(L_1,\ldots, L_h)$ be a
partition of $L$, where the $L_i$'s are disjoint sublinks of $L$ such
that $L=L_1\cup\ldots\cup L_h$. We denote by $| \Pp |=h$ the number of
links in the partition $\Pp$. A special r\^ole is played by the
maximal (resp.~minimal) partition $\Pp_M$ (resp.~$\Pp_m$) of $L$,
which can be characterized as the unique partition such that $| \Pp
|=k$ (resp.~$| \Pp |=1$), so that $L_i=K_i$ for $i=1,\ldots, k$
(resp.~$L_1=L$).  A ($\mathbb{N}$--valued) $\Pp$--cycle for $L$ is a
map $z\colon \{1,\ldots,h\} \to \Z$ that assigns the non--negative
integer $z_i=z(i)$ to every component of the sublink $L_i$ of $L$. In
what follows, we often denote such a cycle $(z_1,\ldots,z_h)$
simply by $\overline{z}$, and we denote by $\overline{0}$
(resp.~by $\overline{1}$) the cycle that assigns the integer $0$
(resp.~1) to every component of $L$.

If $D$ is a diagram of $L$, then any $\Pp$--cycle $\overline{z}$ for
$L$ descends to a $\Pp$--cycle $(D,\overline{z})$ for $D$. In
Section~\ref{quandle} we recall the definition of $\X$--coloring
of $(D, \overline{z})$. The total number of such colorings is denoted
by $c_\X (D, \Pp, \overline{z})$, and turns out to be independent of
the chosen diagram, thus defining an invariant
$c_\X(L,\Pp,\overline{z})$ of the partitioned and labelled link
$(L,\Pp,\overline{z})$.

\subsection{Alexander quandles}
In this paper we deal with a concrete family $\Qq_\Ff$ of finite
quandles, that we are now going to introduce.  Let us fix some
notation we will extensively use from now on.  For every odd prime
$p\geq 3$, we denote by $\Lambda$ (resp.~$\Lambda_m$) the ring
$\Z[t,t^{-1}]$ (resp.~$\Z_m[t,t^{-1}]$). Moreover,
$\pi_m\colon\Lambda\to\Lambda_m$ is the ring homomorphism induced by
the projection $\Z\to\Z_m$. For every $p(t)\in\Lambda$
(resp.~$p(t)\in\Lambda_m$) we define the \emph{breadth} $\br p(t)$ of
$p(t)$ as the difference between the highest and the lowest exponent
of the non--null monomials of $p(t)$. In particular, the breadth of
any constant polynomial (including the null polynomial) is equal to
$0$ (the reason why we set $\br 0=0$ will be clear soon).  If
$p(t),q(t)$ are elements of $\Lambda$ (resp.~of $\Lambda_m$), we write
$p(t)\doteq q(t)$ if $p(t)$ and $q(t)$ generate the same ideal of
$\Lambda$ (resp.~$\Lambda_m$), \emph{i.e.}~if and only if $p(t)=\pm
t^k q(t)$, $k\in\Z$ (resp.~$p(t)=at^k q(t)$, $a\in\Z_m^\ast$,
$k\in\Z$).

Recall that a finite {\it Alexander
quandle} is a pair $(M,*)$, where $M$ is
a finite $\Lambda_m$-module and the quandle operation is defined
(in terms of the module operations) by
$$a*b :=ta + (1-t)b \ . $$
We now define the family $\Qq_\Ff$ of finite Alexander quandles we are
interested in. Fix an odd prime $p$, let $h(t)$ be an
irreducible element of $\Lambda_p$ with positive breadth 
$\br h(t)=n\geq 1$, and let us define $\F(p,h(t))$ as the quotient
ring
$$
\F(p,h(t))=\Lambda_p/(h(t))\ .
$$
If $\hat{h}(t)\in\Z_p[t]\subseteq \Lambda_p$ is such that
$\hat{h}(t)\doteq h(t)$ and $h(0)\neq 0$, then it is readily seen the
the inclusion $\Z_p[t]\hookrightarrow \Lambda_p$ induces an
isomorphism $\Z_p[t]/(\hat{h}(t))\to \F(p,h(t))$. Since $\deg
\hat{h}(t)=\br h(t)=n$, it follows that $\F(p,h(t))$ is a finite field
of cardinality $q=p^n$.


We may therefore define the Alexander quandle $\X :=(\F(p,h(t)),*)$ by setting


$$ a*b:= \overline{t} a+(1-\overline{t})b\qquad {\rm for\ every}\ a,b\in 
\F(p,h(t))\ , $$
where $\overline{t}$
is the class of $t$ in $\F(p,h(t))$.
Once $q=p^n$ is fixed, there exists only one finite field $\F_q$ up to
field isomorphism.  However, even in the case
when $h_1(t)\in\Lambda_p$ and $h_2(t)\in\Lambda_p$
have the same breadth, it may happen that the quandles $(\F(p,h_1(t)),*)$ and
$(\F(p,h_2(t)),*)$ are not isomorphic
(see Remark~\ref{many-quandle}).

We now set 
$$\Qq_\Ff(m)= \{(\F(p,h(t)),*)\, |\,  
\ 1\leq \br h(t)\leq m \}  $$
and
$$\Qq_\Ff=\bigcup_{m\geq 1} \Qq_\Ff(m)\ .$$

\subsection{The invariant $\Aa_\Qq (L,\Pp)$}
Let us fix a quandle $\X=(F(p,h(t)),*)$.  
Let $D$ be a diagram of a link $L$, 
let $\Pp$ be a partition of $L$ and $\overline z$ be a $\Pp$--cycle
for $L$.
If $\X=(\F(p,h(t)),*)$, then
it turns out that the space
of the $\X$-colorings of $(D,\overline{z})$ is a
$\F(p,h(t))$-vector space of dimension $d_{\X}
(L,\Pp,\overline{z})\geq 1$.  Hence $c_{\X}(L,\Pp,
\overline{z})=q^{d_{\X}(L,\Pp,\overline{z})}$, so 
the whole information about $c_{\X}(L,\Pp,
\overline{z})$ is encoded by the integer
$ a_{\X}(L,\Pp,\overline{z}):=
d_{\X}(L,\Pp,\overline{z})-1 \geq 0$.
For instance, if $L=K$ is a knot,
then the $\Aa_\Qq$-{\it marked spectrum} of $K$, that is the set 
$\{a_{\X}(K,n)\, |\, \X\in \Qq_\Ff,\ n\in \Z_{t_\X}\}$, 
considered as a map defined on 
a subset of $\Qq_\Ff \times \N$, 
carries the whole
information provided by these quandle coloring invariants. In this paper we 
concentrate our attention on the derived invariant defined by
$$ \Aa_\Qq(L,\Pp):= \sup \{ a_\X(L,\Pp,\overline{z}) \}\ ,$$
where $\X$ varies in $\Qq_\Ff$ and $\overline{z}$ varies among the
$\Pp$--cycles of $L$. We  show 
in Lemma~\ref{unoriented}
that $\Aa_\Qq(L,\Pp_M)$ is an
invariant of the \emph{unoriented} link $L$.  On the contrary, for a
generic partition $\Pp$ the invariant $\Aa_\Qq(L,\Pp)$ can depend on
the orientations of the components of $L$.  For every partition $\Pp$
we have of course
$$
\Aa_\Qq(L,\Pp_m)\leq \Aa_\Qq (L,\Pp)\leq \Aa_\Qq(L,\Pp_M) \ .
$$

When $L=K$ is a knot, of course there is only one partition ($\Pp_m=\Pp_M$)
and we  simply write $\Aa_\Qq(K)$. Moreover, henceforth the 
invariant $\Aa_\Qq(L,\Pp_m)$ will be denoted simply by $\Aa_\Qq(L)$.

\subsection{A lower bound on the tunnel number of links}
Recall that the {\it tunnel number} $\ t(L)$ of a link $L\subset S^3$ is
the minimum number of properly embedded arcs in $S^3\setminus L$ to be
attached to $L$ in such a way that the regular neighbourhood of the
resulting connected spatial graph is an {\it unknotted} handlebody
(\emph{i.e.}~it is the regular neighbourhood also of a graph lying
on a $2$--dimensional sphere $S^2\subseteq S^3$). Of course, the tunnel
number is an invariant of \emph{unoriented} links.
\smallskip

The argument of
Proposition 6 in \cite{ishii1} (originally given for quandles of type
2) easily extends to our situation (see Proposition \ref{ishii})
and allows us to prove (in Subsection~\ref{tunnel:sub}) the following:

\begin{prop}\label{tunnel} For every link $L$ we have
$$ \Aa_\Qq(L,\Pp_M) \leq t(L) \ . $$
\end{prop}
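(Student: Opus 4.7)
The plan is to derive the bound from Proposition~\ref{ishii}, the Alexander-quandle adaptation of Ishii's handlebody-knot technique. Fix an Alexander quandle $\X=(\F(p,h(t)),*)\in\Qq_\Ff$ and a $\Pp_M$-cycle $\overline{z}$ for $L$; it suffices to prove that $a_\X(L,\Pp_M,\overline{z})\leq t(L)$, since the bound on $\Aa_\Qq(L,\Pp_M)$ then follows by taking the supremum in $\X$ and $\overline{z}$. Set $t:=t(L)$ and choose tunnels $\tau_1,\dots,\tau_t\subset S^3\setminus L$ realizing the tunnel number, so that the regular neighbourhood $H:=N(L\cup\tau_1\cup\dots\cup\tau_t)$ is an unknotted handlebody of genus $t+1$; denote by $G:=L\cup\tau_1\cup\dots\cup\tau_t$ the associated spatial graph.

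The first step is to apply Proposition~\ref{ishii} to the pair $(G,L)$, endowing $G$ with the labelling that extends $\overline{z}$ by the trivial label $1$ on each tunnel arc. Its content is an inclusion of $\F(p,h(t))$-vector spaces of the $\X$-coloring space of $(L,\Pp_M,\overline{z})$ into the $\X$-coloring space of the spatial graph $G$: every $\X$-coloring of $L$ admits an extension to a coloring of $G$ in which the tunnel arc colors are forced by the vertex relations at the trivalent vertices. The second step is the upper bound $\dim_{\F(p,h(t))}\text{Col}_\X(G)\leq t+1$: since $H$ is unknotted, its complement in $S^3$ is again a handlebody of genus $t+1$ with free fundamental group of rank $t+1$, and the associated fundamental quandle of the handlebody-knot is the free quandle on $t+1$ generators, so $\X$-colorings of $G$ correspond bijectively to $(t+1)$-tuples of elements of $\F(p,h(t))$.

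Combining these two steps gives $d_\X(L,\Pp_M,\overline{z})\leq t+1$, whence $a_\X(L,\Pp_M,\overline{z})=d_\X(L,\Pp_M,\overline{z})-1\leq t=t(L)$; passing to the supremum over $\X\in\Qq_\Ff$ and over $\Pp_M$-cycles $\overline{z}$ concludes the proof. The main obstacle is the extension assertion encapsulated in Proposition~\ref{ishii}: one must verify, for an arbitrary quandle in $\Qq_\Ff$, that the crossing and vertex relations imposed by $G$ on an $L$-coloring are always consistent, so that the coloring really does extend across the tunnel arcs. This is the point where Ishii's argument for type-$2$ quandles needs adaptation, and it crucially exploits the $\F(p,h(t))$-linearity of the quandle operation $a*b=\overline{t}\,a+(1-\overline{t})\,b$.
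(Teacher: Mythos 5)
Your overall architecture (pass to a spatial graph $G=L\cup\tau_1\cup\dots\cup\tau_t$ whose regular neighbourhood is an unknotted handlebody, and compare coloring spaces) is the paper's, but both key steps are stated backwards, and as written the argument does not go through. The extension claim in your first step is false: when a tunnel arc $\tau_j$ is adjoined, its color is propagated along $\tau_j$ by the crossing relations and is pinned at both endpoints by the trivalent--vertex condition, so a coloring of $L$ extends over $\tau_j$ if and only if one linear equation over $\F(p,h(t))$ is satisfied. Hence there is no inclusion of the coloring space of $(L,\Pp_M,\overline z)$ into that of $G$; what is true is the opposite: restriction gives an injection of the coloring space of $G$ onto a subspace of the coloring space of $L$ of codimension at most $t$ (one linear condition per tunnel), and it is precisely this codimension bound that brings $t(L)$ into the estimate. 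Indeed, if your inclusion held, then combined with the fact (which your second step implicitly uses) that the number of graph colorings depends only on the isotopy class of the regular neighbourhood, it would force $a_\X(L,\Pp_M,\overline z)=0$ for every link, since an unknotted handlebody has a crossingless spine all of whose Ishii colorings are constant.

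Two further problems. First, the tunnel arcs must be labelled $0$, not $1$: with label $0$ the operation $*^0$ is trivial, so arcs of $L$ passing under a tunnel do not change color and colorings of $G$ genuinely restrict to colorings of $(L,\Pp_M,\overline z)$; with label $1$ this restriction map is destroyed. Second, your bound $\dim\mathrm{Col}_\X(G)\le t+1$ via ``free quandle on $t+1$ generators'' conflates quandle homomorphisms from a free quandle with Ishii's graph colorings, which carry the vertex condition; for the latter the correct value for a spine of an unknotted handlebody is dimension exactly $1$, i.e.\ $a_\X(G)=0$, and it is this equality, combined with the codimension--$\le t$ statement, that yields $d_\X(L,\Pp_M,\overline z)\le t+1$. (Invoking Proposition~\ref{ishii} is also circular: that proposition is the very inequality being proved; the content lies in its proof, which attaches the tunnels one at a time and shows $\deg\Phi_\X(G_j)(t)\ge\deg\Phi_\X(G_{j-1})(t)-1$.) The repair is to reverse the direction of your first step, label the tunnels by $0$, and replace the free--quandle count by the equality $a_\X(G_m)=0$.
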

In particular, $\Aa_\Qq(L,\Pp)$ is always finite.

\subsection{Lower bounds on genera of links}
We say that $\Pp=(L_1,\ldots,L_h)$ is a {\it boundary partition} of
$L=K_1\cup\ldots\cup K_k$ if there exists a system $(\Sigma_1,\ldots, \Sigma_h)$
of \emph{disjoint} connected oriented surfaces such that $\Sigma_i$ is a
Seifert surface of $L_i$ (i.e. $\partial \Sigma_i = L_i$ as oriented $1$--manifolds, where $\partial \Sigma_i$ inherits the orientation induced by $\Sigma_i$), 
for every
$i=1,\ldots,h$.  If $\Pp$ is a boundary partition of $L$, then we
define the \emph{genus} of $(L,\Pp)$ by
$$g(L,\Pp):= \min \left\{ \sum_{i=1}^h g(\Sigma_i)\right\}\ ,$$
where $(\Sigma_1,\ldots,\Sigma_h)$ varies among
such
systems of Seifert surfaces. If $\Pp$ is not a boundary partition, we set
$$g(L,\Pp)=+\infty \ . $$
 
Every link admits a connected Seifert surface, so $\Pp_m$ is
always a boundary partition, and the number $g(L):= g(L,\Pp_m)$ is
usually known as the \emph{genus} of $L$. On the other hand, $\Pp_M$
is a boundary partition if and only if $L$ is a boundary link. It is
immediate that $g(L,\Pp_M)$ is an invariant of the \emph{unoriented}
link $L$.
 
\medskip

The following result provides the fundamental estimate on link genera 
provided by quandle invariants, and
is proved in
Section~\ref{main-dim} (note that the statement below is non--trivial only when
$\Pp$ is a boundary partition):

\begin{teo}\label{generalbound} 
Let $(L,\Pp)$ be a $k$--component partitioned link, 
and let
$\overline{z}_1,\overline{z}_2$ be $\Pp$--cycles for $L$. Then we have:
$$ |a_\X(L,\Pp,\overline{z}_1)-a_\X(L,\Pp, \overline{z}_2)|\leq 2g(L,\Pp) + 
k - | \Pp | \ . $$
\end{teo}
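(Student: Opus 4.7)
The strategy is to exploit a system of disjoint Seifert surfaces $\Sigma_1,\dots,\Sigma_h$ realizing $g(L,\Pp)$ to present the reduced coloring module by a square matrix whose size equals $r := 2g(L,\Pp) + k - |\Pp|$. The bound is geometric in nature: since each $\Sigma_i$ has $k_i$ boundary components and genus $g(\Sigma_i)$, a direct Euler-characteristic count gives
\[
r \,=\, \sum_{i=1}^{h}\bigl(2g(\Sigma_i) + k_i - 1\bigr) \,=\, \operatorname{rank} H_1(\Sigma;\Z), \qquad \Sigma := \bigsqcup_{i=1}^h \Sigma_i,
\]
so the claimed estimate matches the first Betti number of the Seifert system. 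If $\Pp$ is not a boundary partition then $g(L,\Pp) = +\infty$ and the inequality is vacuous, so I would immediately restrict to the boundary case.

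The first step would be to reduce to controlling the \emph{reduced} coloring space, obtained by fixing a reference color on a single chosen arc of a diagram $D$ of $L$: this subspace is the kernel of a homogeneous linear system over $\F_q = \F(p,h(t))$ and has dimension precisely $a_\X(L,\Pp,\overline{z})$. The core step is then to show that, using $\Sigma_1,\dots,\Sigma_h$ and a diagram $D$ adapted to them (for instance, obtained by Seifert's algorithm applied separately to each $\Sigma_i$), the reduced coloring space admits a presentation by an $r \times r$ matrix $M(\overline{z})$ whose entries are polynomial expressions in $\overline{t}^{z_1},\dots,\overline{t}^{z_h}$. In the classical knot case ($k=h=1$), $M(z)$ is essentially $\overline{t}^z V - V^T$ for a Seifert matrix $V$ of size $2g$; the plan is to generalize this to a multi-variable Alexander--Seifert-type matrix $V_\Pp$ of size $r$ associated with the partitioned boundary link, in which each label $z_i$ governs the substitution on the block coming from $\Sigma_i$ while the off-block entries encode the linkings between distinct sublinks.

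Once such a presentation is established, the conclusion is immediate: the nullity of an $r \times r$ matrix always lies in $[0,r]$, so
\[
\bigl|\,a_\X(L,\Pp,\overline{z}_1) - a_\X(L,\Pp,\overline{z}_2)\,\bigr| \,=\, \bigl|\,\dim_{\F_q}\!\ker M(\overline{z}_1) - \dim_{\F_q}\!\ker M(\overline{z}_2)\,\bigr| \,\leq\, r,
\]
which is the claimed estimate.

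The main obstacle is the construction of the matrix $M(\overline{z})$ itself --- more precisely, the verification that the size $r$ (rather than a larger integer) suffices. When the sublinks are non-trivially linked with each other, the crossing relations involving several labels $z_i$ interact through the ambient topology of $S^3$, and one must check that these interactions are already captured by $H_1(\Sigma;\Z)$ rather than contributing additional independent relations. A careful Wirtinger-type analysis of the diagram produced by Seifert's algorithm on each $\Sigma_i$, tracking how each twisted band contributes exactly one generator and one relation involving the appropriate variable $\overline{t}^{z_i}$, and how the $k_i - 1$ Seifert disks beyond the first glue meridians of components of $L_i$ together, should yield the required matrix of exact size $r$.
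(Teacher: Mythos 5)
Your overall strategy---build a diagram adapted to a system of disjoint Seifert surfaces realizing $g(L,\Pp)$ and observe that only a piece of size $r=2g(L,\Pp)+k-|\Pp|$ of the resulting linear system depends on the labels---is the same as the paper's, and your Euler--characteristic identification of $r$ with ${\rm rank}\, H_1(\Sigma;\Z)$ is correct. But the step you yourself flag as the ``main obstacle'' is not only left unproved: it is false as stated once $|\Pp|>1$. The reduced coloring space is in general \emph{not} the kernel (or cokernel) of an $r\times r$ matrix. Take $L$ to be the $k$--component unlink with its maximal partition, each component bounding a disjoint disk: then $r=0$, yet $a_\X(L,\Pp_M,\overline{z})=k-1$ for \emph{every} cycle (a crossingless diagram imposes no conditions), so the reduced space has dimension $k-1>0$ and cannot be presented by a $0\times 0$ matrix. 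What is actually true---and what the explicit elimination carried out in Section~\ref{diag:det} for $\Pp_m$ yields, and would yield in general---is that the coloring space splits as $\F_q^{|\Pp|}\oplus\ker N(\overline{z})$ with $N(\overline{z})$ of size $r\times r$: one free affine constant appears for \emph{each} of the $|\Pp|$ disjoint closing tangles, not just one overall. Your final inequality survives only because the extra summand is independent of $\overline{z}$ and cancels in the difference, but the justification you give (``the nullity of an $r\times r$ matrix lies in $[0,r]$'') does not apply to the space you actually described, so as written the argument has a hole exactly at its load-bearing point.

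The paper's proof sidesteps the need for any square presentation. Writing the colorings of the adapted special diagram as the solutions of the linear system \eqref{system1}--\eqref{system3} in the variables $a_i,d_i,b_i,f_i$, it observes that replacing $\overline{z}_1$ by $\overline{z}_2$ changes only the $r$ equations $b_i=-\overline{t}^{-z_i}H_{z_i}(\overline{t})d_i$, and two linear systems sharing all but $r$ equations have solution spaces whose dimensions differ by at most $r$ (both are intersections of a fixed subspace $V$ with a subspace of codimension at most $r$, hence have dimension in $[\dim V-r,\dim V]$). To repair your route you should either carry out the elimination and keep explicit track of the $|\Pp|-1$ label--independent dimensions before comparing nullities, or simply replace the last step by this direct comparison of the two systems, which requires no matrix of exact size $r$ at all.
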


Recall that $\overline{0}$ is the cycle that assigns the
integer $0$ to every component of $L$. In the
hypotheses of the previous Theorem, for every partition $\Pp$ we
have $a_\X (L,\Pp,\overline{0})=k-1$. Hence Theorem~\ref{generalbound}
immediately implies the following Corollary:

\begin{cor}\label{lowerbound}
If $(L,\Pp)$ is a $k$--component partitioned link, then:
$$ \Aa_\Qq(L,\Pp)\leq 2g(L,\Pp) + 2k - | \Pp | - 1\ . $$
In particular:
\begin{itemize}
\item 
If $\Pp_M$ is the maximal partition of $L$, then
$$
\Aa_\Qq (L,\Pp_M)\leq 2g(L,\Pp_M) +k-1\ .
$$
\item 
If $\Pp_m$ is the minimal partition of $L$, then
$$
\Aa_\Qq(L)=\Aa_\Qq (L,\Pp_m)\leq 2g(L) +2k-2\ .
$$
\item 
 For every knot $K$ we have
$$
\Aa_\Qq(K)\leq 2g(K)\ .
$$
\end{itemize}
\end{cor}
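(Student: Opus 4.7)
The plan is to derive the corollary as a direct specialization of Theorem~\ref{generalbound}, using the zero labelling $\overline{0}$ as a pivot. For any $\X \in \Qq_\Ff$ and any $\Pp$--cycle $\overline{z}$ of $L$, I would apply the theorem with $\overline{z}_1 := \overline{z}$ and $\overline{z}_2 := \overline{0}$ to obtain
$$a_\X(L,\Pp,\overline{z}) \;\leq\; a_\X(L,\Pp,\overline{0}) + 2g(L,\Pp) + k - |\Pp|.$$

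The key ingredient is then the identity $a_\X(L,\Pp,\overline{0}) = k-1$, which the authors record just before stating the corollary. The reason is essentially tautological: labelling every component by $0$ replaces the quandle operation at every crossing by the trivial operation $a *^0 b = a$, so the coloring equations $c = a *^0 b$ at undercrossings just say $c = a$. Consequently the $\X$--colorings of $(D,\overline{0})$ are exactly the assignments of elements of $\F(p,h(t))$ that are constant on each connected component of $L$; since $L$ has $k$ components, these form an $\F(p,h(t))$--vector space of dimension $k$, hence $a_\X(L,\Pp,\overline{0}) = k-1$. This is the only non-formal point of the proof, and it is handled in the preamble to the corollary.

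Plugging the value $k-1$ into the previous inequality yields
$$a_\X(L,\Pp,\overline{z}) \;\leq\; 2g(L,\Pp) + 2k - |\Pp| - 1,$$
and since the right-hand side depends neither on $\X$ nor on $\overline{z}$, taking the supremum over $\X \in \Qq_\Ff$ and over $\Pp$--cycles on the left gives the main inequality $\Aa_\Qq(L,\Pp)\leq 2g(L,\Pp)+2k-|\Pp|-1$. The three bulleted special cases then follow by direct substitution: taking $|\Pp_M| = k$ gives $2g(L,\Pp_M)+k-1$; taking $|\Pp_m|=1$ gives $2g(L)+2k-2$; and taking both $k=1$ and $|\Pp|=1$ gives $\Aa_\Qq(K)\leq 2g(K)$.

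Since Theorem~\ref{generalbound} is assumed, there is no genuine obstacle to the derivation; the whole argument amounts to one triangle-type inequality together with the trivial-labelling count. The only place where care is needed is checking that $\overline{0}$ is an admissible choice for every partition $\Pp$ (which it is, as the zero map is a $\Pp$--cycle for every $\Pp$) and that the dimension count $d_\X(L,\Pp,\overline{0}) = k$ is genuinely independent of $\Pp$ (which it is, since the $\Pp$--structure does not enter when $\overline{z} = \overline{0}$).
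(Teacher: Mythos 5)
Your argument is correct and is precisely the paper's own proof: the authors note just before the corollary that $a_\X(L,\Pp,\overline{0})=k-1$ and then invoke Theorem~\ref{generalbound} with $\overline{z}_2=\overline{0}$, exactly as you do. Your justification of the identity $a_\X(L,\Pp,\overline{0})=k-1$ (trivial operation at label $0$, hence colorings constant on each of the $k$ components) is the intended one, which the paper leaves implicit.
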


\begin{remark}\label{effectivebound}{\rm Let $L=K$ be a knot.  Clearly
    if $\Aa_\Qq(K)=2h$ is even, then $g(K)\geq h$; if
    $\Aa_\Qq(K)=2h-1$ is odd, then again $g(K)\geq h$. In particular,
    if $g(K)=g$, the bound on the genus provided by $\Aa_\Qq (K)$ is sharp if and only if $\Aa_\Qq(K)\geq
    2g-1$. The very same remark also applies in the general case of
    partitioned links.}
\end{remark}

\subsection{Alexander ideals and quandle coloring invariants of links}
\label{performance}
Once Theorem~\ref{generalbound} and Corollary~\ref{lowerbound} are established,
we will discuss a bit the performances of the $\Aa_\Qq(L,\Pp)$'s as
link invariants as well as lower bounds for the link genera. We will
mostly concentrate on the case $\Pp=\Pp_m$. 

\smallskip

The statement of Theorem~\ref{lowerbound} reminds the classical lower
bound (see \emph{e.g.}~\cite[Theorem 7.2.1]{crom})
$$ \Aa(L) \leq 2g(L)+k -1\ ,$$
where $\Aa(L)$ is the breadth of the Alexander polynomial
$$\Delta(L)(t):= {\rm det}\ \left(S(L) - t S(L)^T\right)\ ,$$
$S(L)$ being any Seifert matrix of $L$ 
(of course, the above estimate holds only if we agree that
the breadth of the null polynomial is equal to $0$).

Let us introduce some notations that will prove useful in describing
the relations between Alexander polynomial invariants and quandle
coloring invariants of links. We refer to~\cite{hill,hill2} for the
definitions and some basic results about Alexander ideals of links and
modules.  As usual, we denote by $p$ an odd prime number.  
If $K,K'$ are disjoint oriented knots in $S^3$, we denote by
$\lk (K,K')$ the usual \emph{linking number} of $K$ and $K'$. 
For every
oriented link $L=K_1\cup\ldots\cup K_k$, let $\widetilde{X}(L)$ the
\emph{total linking number} covering of the complement $\compl (L)$ of
$L$ in $S^3$, \emph{i.e.}~the covering associated to the kernel of the
homomorphism $\alpha\colon \pi_1 (\compl(L))\to\Z$, $\alpha
(\gamma)=\sum_{i=1}^k {\rm lk} (\gamma,K_i)$.  The covering
$\widetilde{X}(L)\to \compl(L)$ is infinite cyclic, so the homology
group $A(L)=H_1(\widetilde{X}(L);\Z)$
(resp.~$A^{(p)}(L)=H_1(\widetilde{X}(L);\Z_p)$) admits a natural
structure of $\Lambda$--module (resp.~$\Lambda_p$--module) such that
$t\in\Lambda$ (resp.~$t\in\Lambda_p$) acts on $A(L)$ (resp.~$A^{(p)}(L)$) 
as the map induced
by the covering translation corresponding to a loop $\gamma\in\pi_1(\compl(L))$
such that $\alpha(\gamma)=1$.
Let $E_i
(L)\subseteq \Lambda$ (resp.~$E_i^{(p)} (L)\subseteq \Lambda_p$) be the
$i$--th elementary ideal of $A(L)$ (resp.~$A^{(p)}(L)$). Since
$\Lambda$ is a U.F.D. (resp.~$\Lambda_p$ is a P.I.D.), for every
$i\geq 1$ it makes sense to define $\Delta_i (L)\in\Lambda$
(resp.~$\Delta_i^{(p)} (L)\in\Lambda$) as the generator of the
smallest principal ideal containing $E_{i-1}(L)$ (resp.~of the ideal
$E^{(p)}_{i-1}(L)$).  Then, $\Delta_i(L) (t)$
(resp.~$\Delta_i^{(p)}(L)(t)$) is well--defined only up to invertibles
in $\Lambda$ (resp.~$\Lambda_p$), \emph{i.e.}~up to multiplication by
$\pm t^k$, $k\in\Z$ (resp.~$at^k$, $a\in\Z_p^\ast$, $k\in\Z$).
Since 
$A(L)$ admits the square presentation matrix
$S(L)-tS(L)^T$ we have
$$
\Delta_1(L)(t)=\det \left(S(L)-tS(L)^T\right)=\Delta(L)(t)\ ,
$$
so $\Delta_1 (L)(t)$ coincides with the Alexander polynomial of $L$.
Some relations between $\Delta_i(L)(t)$ and $\Delta_i^{(p)}(L)(t)$ are
described in Corollary~\ref{princ:cor} (but see also
Remarks~\ref{inoue:rem} and~\ref{controes}).

Recall that $E^{(p)}_{i}(L)\subseteq E^{(p)}_{i+1}(L)$ for every
$i\in\N$, so either $\Delta_i^{(p)}(L)(t)=\Delta^{(p)}_{i+1}(L)(t)=0$,
or $\Delta^{(p)}_{i+1}(L)(t)$ divides $\Delta^{(p)}_i(L)(t)$ in
$\Lambda_p$.  Therefore, it makes sense to define the polynomial
$e^{(p)}_i(L) (t)\in\Lambda_p$ as follows:
$$
\begin{array}{lllc}
e^{(p)}_i(L)(t)&=&0\qquad & {\rm if}\ \Delta^{(p)}_i(L)(t)=
\Delta^{(p)}_{i+1}(L)(t)=0,\\ 
e^{(p)}_i(L)(t)&=&\frac{\Delta^{(p)}_i(L)(t)}{\Delta^{(p)}_{i+1}(L)(t)}\qquad 
& {\rm otherwise} \ .
\end{array}
$$
Also recall (see Lemma~\ref{reduce:lemma}) that there exists a minimum
$i_0\in\N$ such that $\Delta^{(p)}_i(L)(t)=\pm 1$ for $i\geq i_0$, whence
$e^{(p)}_i(L)(t)=\pm 1$ for every $i\geq i_0$.

In the very same way we can define the family of polynomials with integer
coefficients $\{e_i(L)(t)\}$ in $\Lambda$.

\medskip


In Section~\ref{AvsA} we prove the following result, which is strongly
related with the main result of~\cite{Inoue}, although there is a
subtlety in the statement that we will point out below.

\begin{teo}\label{inoue:teo}
Suppose $L$ is a link, and take a quandle
$\X=(\F(p,h(t)),\ast)\in\Qq_\Ff$. Then the space of $\X$--colorings
of $(L,\Pp_m,\overline{z})$ is in bijection with the module
$$
\F(p,h(t))\oplus \left(\bigoplus_{i=1}^\infty \Lambda_p 
\Big/\left({e}^{(p)}_i(L)(t^z), h(t)\right)\right) \ ,
$$
where $z=\overline{z}(\Pp_m)$ is the value assigned by $\overline{z}$
to every component of $L$, and
$(e_i^{(p)}(L)(t), h(t))\subseteq \Lambda_p$ is the ideal generated by 
${e}^{(p)}_i (t)$ and $h(t)$.
\end{teo}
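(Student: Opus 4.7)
The plan is to translate the counting of $\X$-colorings into a $\Lambda_p$-module $\Hom$ calculation and then apply the structure theory of the PID $\Lambda_p = \Z_p[t,t^{-1}]$. Since all components of $L$ receive the same label $z$ under the minimal partition, an $\X$-coloring of $(D,\overline{z})$ is by definition an $\X(z)$-coloring of $D$, and the operation of $\X(z)$ is $a *^z b = t^z a + (1-t^z) b$. So an $\X$-coloring is an assignment of an element of $\F(p,h(t))$ to each arc of $D$ satisfying, at every crossing, the linear relation $\gamma = t^z \alpha + (1-t^z) \beta$, where $\gamma$ is the exiting understrand, $\alpha$ the entering understrand, and $\beta$ the overstrand.

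Let $\Mm(D,z)$ denote the $\Lambda_p$-module generated by the arcs of $D$ modulo exactly these crossing relations. Then $\X$-colorings are in canonical bijection with $\Hom_{\Lambda_p}(\Mm(D,z),\F(p,h(t)))$. The key structural input, which is the ``suitable version of Inoue's result'' mentioned in the introduction, is the identification
$$
\Mm(D,z) \;\cong\; \Lambda_p \;\oplus\; \varphi_z^{*}A^{(p)}(L),
$$
where $\varphi_z\colon \Lambda_p\to\Lambda_p$ is the ring endomorphism $t\mapsto t^z$ and $\varphi_z^{*}$ denotes change of scalars. The free summand $\Lambda_p$ comes from constant colorings: assigning the same value to every arc trivially satisfies the crossing relations, yielding a split injection $\Lambda_p\hookrightarrow \Mm(D,z)$. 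The complementary quotient is computed by Fox calculus on the Wirtinger presentation, restricted to the kernel of the map $\alpha\colon\pi_1(\compl(L))\to\Z$ sending every meridian to $1$; that kernel defines precisely the total linking number cover, whence the quotient is exactly $A^{(p)}(L)$ with $t$ acting through $t^z$.

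Because $\Lambda_p$ is a PID, the structure theorem produces an invariant factor decomposition
$$
A^{(p)}(L) \;\cong\; \bigoplus_{i=1}^{\infty} \Lambda_p/(e^{(p)}_i(L)(t)),
$$
using that the invariant factors of $A^{(p)}(L)$ are, up to units, the successive quotients $\Delta^{(p)}_i(L)(t)/\Delta^{(p)}_{i+1}(L)(t)=e^{(p)}_i(L)(t)$, with only finitely many non-units by Lemma~\ref{reduce:lemma}. Pulling back by $\varphi_z$ replaces $t$ by $t^z$ in each cyclic factor. Combining this with the elementary PID identities $\Hom_{\Lambda_p}(\Lambda_p,\Lambda_p/(h))\cong\Lambda_p/(h)=\F(p,h(t))$ and $\Hom_{\Lambda_p}(\Lambda_p/(d),\Lambda_p/(h))\cong\Lambda_p/(d,h)$ yields the module asserted in the statement.

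The main obstacle is verifying the isomorphism $\Mm(D,z)\cong\Lambda_p\oplus \varphi_z^{*}A^{(p)}(L)$. Inoue's theorem treats the case of a knot with $z=1$, so one must extend the argument in two directions: to links, where the correct infinite cyclic cover is the \emph{total linking number} cover rather than the universal abelian cover, and to arbitrary $z\geq 0$, where one has to check that the $z$-fold iterated operation $*^z$ translates cleanly into the base change $\varphi_z^{*}$. The subtlety flagged just before the theorem presumably concerns precisely how Inoue's statement must be reinterpreted in this link-plus-label context.
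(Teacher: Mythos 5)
Your proposal is correct in outline but takes a genuinely different route from the paper's. You reduce the coloring count to $\Hom_{\Lambda_p}(\Mm(D,z),\F(p,h(t)))$ for the $\Lambda_p$--module $\Mm(D,z)$ presented by the arcs and crossing relations of an \emph{arbitrary} diagram, and you rest everything on the identification of $\Mm(D,1)$ with $\Lambda_p\oplus A^{(p)}(L)$, i.e.\ with the relative homology of the total linking number cover computed by Fox calculus on the Wirtinger presentation; this is exactly the generalization of Inoue's argument that the authors explicitly decline to follow (their proof ``does not make use of Fox differential calculus''). The paper instead fixes a special diagram adapted to a connected Seifert surface, so that the coloring space becomes $\F(p,h(t))\oplus\ker N(z,p,h(t))$ for the explicit $\alpha\times\alpha$ matrix $N(z)=t^zS(L)-S(L)^T$ (Lemma~\ref{Qlemma} and Corollary~\ref{seifertcor}); Lemma~\ref{reduce:lemma} identifies $N(1,p)$ as a presentation matrix of $A^{(p)}(L)$, and Smith normal form over the P.I.D.\ $\Lambda_p$ followed by the substitution $t\mapsto t^z$ (which commutes with the diagonalizing matrices) gives the theorem. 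The Seifert--matrix route yields the bound $a_\X\leq 2g+k-1$ for free and the matrix $N(z,p,h(t))$ is reused heavily in Sections~\ref{g=1} and~\ref{g-arbitrario}; your route is more intrinsic and makes the role of $A^{(p)}(L)$ transparent, but it front--loads all the work into the identification $\Mm(D,1)\cong\Lambda_p\oplus A^{(p)}(L)$, which you flag but do not carry out. Since that identification is the standard mod~$p$ analogue of the splitting $A'(L)\cong A(L)\oplus\Lambda$ that the paper itself quotes, this is not a fatal gap, but two points deserve care: the operation you call ``pulling back by $\varphi_z$'' must be \emph{extension} of scalars $-\otimes_{\Lambda_p,\varphi_z}\Lambda_p$, which replaces a presentation matrix $P(t)$ by $P(t^z)$ and sends $\Lambda_p/(d(t))$ to $\Lambda_p/(d(t^z))$ --- restriction of scalars along $\varphi_z$ would produce a module of the wrong size; and the free summand arises from the split \emph{surjection} $\Mm(D,z)\to\Lambda_p$ sending every arc generator to $1$, whose dual accounts for the constant colorings.
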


Let us compare our result with Inoue's Theorem~\cite[Theorem 1]{Inoue}. 
We first observe that
in~\cite[Theorem 1]{Inoue} only the case
when $L=K$ is a knot and $\overline z = \overline 1$ 
is considered. Moreover, our proof of Theorem~\ref{inoue:teo}
does not make use of Fox differential calculus, and is therefore
quite different from Inoue's argument.
However, maybe the most interesting feature of the statement of
Theorem~\ref{inoue:teo} is that
%

\smallskip
\noindent
{\it The polynomial
  $e_i^{(p)}(L)(t)\in\Lambda_p$ is {\it not} the reduction mod {(p)}, say
  $\pi_p(e_i(L)(t))$, of $e_i(L)(t)$} 
\smallskip

\noindent as it could be suggested by the original statement of
~\cite[Theorem1]{Inoue}.  In fact, in Remark~\ref{inoue:rem} we 
show that the statement of Theorem~\ref{inoue:teo} does not hold
if the $e_i^{(p)}(L)(t)$'s are replaced by the
$\pi_p(e_i(L)(t))$'s.
In other words, in the following statement from the abstract of~\cite{Inoue}:

\smallskip
\noindent
{\it `` The number of all quandle homomorphisms of a knot quandle of a
  knot to an Alexander quandle is completely determined by Alexander
  polynomials of the knot''} 

\smallskip

\noindent the mentioned Alexander polynomials are not just the ones
relative to the usual Alexander $\Lambda$--module $A(L)$, but one has to
consider the polynomials associated to the whole family of
$\Lambda_p$--modules $A^{(p)}(L)$.

\smallskip

In the case of knots, building on Theorem~\ref{inoue:teo} we 
deduce (in Section~\ref{AvsA}) the following:

\begin{teo}\label{K=K} For every knot $K$ we have
$$
\Aa_\Qq(K)\leq \Aa (K) \ .
$$
Moreover
$\Aa_\Qq(K)= 0 $ if and only if $\Aa (K)=0$.
\end{teo}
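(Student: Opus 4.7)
The strategy is to feed Theorem~\ref{inoue:teo} into the structure of the invariants $e_i^{(p)}(K)(t)$ and conclude via multiplicativity of the breadth. Fix $\X=(\F(p,h(t)),*)\in\Qq_\Ff$ and a $\Pp_m$-cycle $\overline z$ with value $z\in\N$. By Theorem~\ref{inoue:teo} the space of $\X$-colorings of $(K,\Pp_m,\overline z)$ is identified with $\F(p,h(t))\oplus\bigoplus_{i\ge 1}\Lambda_p/(e_i^{(p)}(K)(t^z),h(t))$. Since $h(t)$ is irreducible in the PID $\Lambda_p$, the ring $\F(p,h(t))=\Lambda_p/(h(t))$ is a field, so each summand $\Lambda_p/(e_i^{(p)}(K)(t^z),h(t))$ is a quotient of this field and therefore equals $\F(p,h(t))$ when $h(t)\mid e_i^{(p)}(K)(t^z)$ in $\Lambda_p$, and is trivial otherwise. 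Reading off the $\F(p,h(t))$-dimension yields
$$a_\X(K,\Pp_m,\overline z)=\#\{\,i\ge 1:h(t)\mid e_i^{(p)}(K)(t^z)\text{ in }\Lambda_p\,\}.$$

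\textbf{Bounding the count by $\Aa(K)$.} For a knot one has the classical relation $\Delta(K)(1)=\pm 1$, and $\Delta^{(p)}(K)(t)\doteq\pi_p\bigl(\Delta(K)(t)\bigr)$ for knots (using $A^{(p)}(K)\cong A(K)\otimes\Z_p$, cf.~Corollary~\ref{princ:cor}), so $\Delta^{(p)}(K)(1)\ne 0$. The telescoping identity $\Delta^{(p)}(K)(t)=\prod_i e_i^{(p)}(K)(t)$ then forces every $e_i^{(p)}(K)(t)$ to be a nonzero element of the integral domain $\Lambda_p$. If $z=0$, the divisibility $h(t)\mid e_i^{(p)}(K)(1)\in\Z_p\setminus\{0\}$ is impossible since $\br h(t)\ge 1$, so $a_\X(K,\Pp_m,\overline 0)=0$. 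If $z\ge 1$, then $e_i^{(p)}(K)(t^z)$ is a nonzero multiple of $h(t)$, so $\br e_i^{(p)}(K)(t^z)\ge \br h(t)\ge 1$, which forces $e_i^{(p)}(K)(t)$ itself to be non-constant and thus $\br e_i^{(p)}(K)(t)\ge 1$. By additivity of breadth under multiplication in the domain $\Lambda_p$,
$$a_\X(K,\Pp_m,\overline z)\le \#\{\,i:\br e_i^{(p)}(K)(t)\ge 1\,\}\le \sum_i \br e_i^{(p)}(K)(t)=\br\Delta^{(p)}(K)(t)\le \br\Delta(K)(t)=\Aa(K),$$
the last step because reduction modulo $p$ can only delete monomials. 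Taking the supremum over $\X\in\Qq_\Ff$ and over $\overline z$ produces $\Aa_\Qq(K)\le \Aa(K)$.

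\textbf{Equivalence and main obstacle.} The implication $\Aa(K)=0\Rightarrow\Aa_\Qq(K)=0$ is then immediate from the inequality just proved. Conversely, assume $\Aa(K)\ge 1$: choose an odd prime $p$ not dividing the leading or trailing coefficient of $\Delta(K)(t)$, so that $\br\Delta^{(p)}(K)(t)=\br\Delta(K)(t)\ge 1$; by the product formula some $e_j^{(p)}(K)(t)$ has positive breadth and, being a non-unit in the PID $\Lambda_p$, admits an irreducible factor $h(t)$ with $\br h(t)\ge 1$ (after rescaling by $t^k$ we may also assume $h(0)\ne 0$). The quandle $\X=(\F(p,h(t)),*)$ then lies in $\Qq_\Ff$, and the constant cycle $\overline 1$ gives $h(t)\mid e_j^{(p)}(K)(t)$, hence $a_\X(K,\Pp_m,\overline 1)\ge 1$ and $\Aa_\Qq(K)\ge 1$. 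The delicate point of the whole argument is the opening dictionary between the summands in Theorem~\ref{inoue:teo} and the irreducible divisors of the $e_i^{(p)}(K)(t^z)$, together with the identification $\Delta^{(p)}(K)(t)\doteq\pi_p(\Delta(K)(t))$ specific to the knot case; once these two facts are in place the estimate is the straightforward multiplicative breadth computation above.
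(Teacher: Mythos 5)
Your proof is correct and follows essentially the same route as the paper: Theorem~\ref{inoue:teo} identifies $a_\X(K,\Pp_m,\overline z)$ with the number of indices $i$ for which $h(t)$ divides $e_i^{(p)}(K)(t^z)$, and that count is bounded by $\br \Delta_1^{(p)}(K)(t)\leq \Aa(K)$ via additivity of the breadth, with the converse obtained exactly as in Lemma~\ref{stimaopposta}. The only (harmless) deviations are local shortcuts: you handle an arbitrary cycle $z$ directly through $\br e_i^{(p)}(K)(t^z)=z\,\br e_i^{(p)}(K)(t)$ instead of invoking the reduction to $\overline z=\overline 1$ of Lemma~\ref{z=1}, and you derive the nonvanishing of $\Delta_1^{(p)}(K)(t)$ from $\Delta(K)(1)=\pm 1$ rather than from the augmentation argument of Lemma~\ref{k-unitary} (note only that the identity $\Delta_1^{(p)}=\pi_p(\Delta)$ of Corollary~\ref{princ:cor} holds for all links, not just knots; what is special to knots is $\Delta(K)(1)=\pm1$).
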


In particular, as a bound on the genus of knots, the invariant $\Aa_\Qq(K)$ 
is dominated by $\Aa (K)$.
Moreover,
the following example shows that, when $L=K$ is a knot, the difference
between $\Aa (K)$ and $\Aa_\Qq (K)$ may become arbitrarily large.

For any pair $p,q$ of coprime integers, the torus knot $T_{p,q}$ has
tunnel number $t(T_{p,q})=1$ (and its unknotting tunnels have
been classified in \cite{BRZ}). Denoting by $\Delta_{p,q}(t)$ the
Alexander polynomial of $T_{p,q}$, it is well--known (see
\emph{e.g}~\cite[page 128]{burde}) that:
\begin{equation*}
\Delta_{p,q} (t)=\frac{(t^{pq} -1)(t-1)}{(t^p-1)(t^q-1)},\qquad
g(T_{p,q})=\frac{(p-1)(q-1)}{2}\ .
\end{equation*}
In particular, the bound on the genus of $T_{p,q}$ provided by the Alexander
polynomial is sharp, \emph{i.e.}~we have $\Aa (T_{p,q})=2g(T_{p,q})$.
As a consequence, 
we get the following:
\begin{prop}\label{via_crom} 
For every $n_0\in\mathbb{N}$ there exist an integer
$n\geq n_0$ and a knot $K$ such that
$$\Aa_\Qq(K)\leq  t(K)= 1 < n = 2g(K)=\Aa(K)\ . $$
\end{prop}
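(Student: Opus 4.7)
The plan is to take $K = T_{p,q}$ a torus knot with $p,q$ coprime and sufficiently large, and verify the four quantities in the chain directly. All the ingredients are stated in the paper just above the proposition.

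First, by Proposition~\ref{tunnel} applied to the knot case (where $\Pp_m = \Pp_M$), one has
$$\Aa_\Qq(K) \leq \Aa_\Qq(K,\Pp_M) \leq t(K)$$
for every knot. Since $t(T_{p,q})=1$ by the cited result of~\cite{BRZ}, this immediately gives $\Aa_\Qq(T_{p,q}) \leq 1$ and in fact $t(T_{p,q}) = 1$.

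Second, from the explicit formulas recalled in the excerpt, one has $g(T_{p,q}) = (p-1)(q-1)/2$ and $\Aa(T_{p,q}) = 2g(T_{p,q}) = (p-1)(q-1)$ (the latter being a direct consequence of the statement that the Alexander polynomial is sharp for torus knots, which is what the excerpt asserts). To make the relevant integer $n = 2g(T_{p,q}) = \Aa(T_{p,q})$ at least $n_0$, it suffices to choose coprime $p,q$ with $(p-1)(q-1) \geq \max(n_0, 2)$; for instance $p=2$ and $q$ any odd integer with $q - 1 \geq n_0$ works, giving $n = q-1 \geq n_0$.

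Putting these together yields the chain
$$\Aa_\Qq(T_{p,q}) \leq t(T_{p,q}) = 1 < n = (p-1)(q-1) = 2g(T_{p,q}) = \Aa(T_{p,q}),$$
which is exactly the conclusion of Proposition~\ref{via_crom}. There is no real obstacle here: the proposition is essentially a direct juxtaposition of the upper bound $\Aa_\Qq(K) \leq t(K)$ coming from Proposition~\ref{tunnel} with the classical sharpness of the Alexander bound on torus knots and the fact that torus knots have tunnel number one; the only task is to observe that the gap $\Aa(T_{p,q}) - \Aa_\Qq(T_{p,q}) \geq (p-1)(q-1) - 1$ is unbounded as $p,q$ grow.
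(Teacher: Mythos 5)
Your proof is correct and follows exactly the route the paper intends: Proposition~\ref{via_crom} is stated there as an immediate consequence of Proposition~\ref{tunnel} combined with the facts $t(T_{p,q})=1$ and $\Aa(T_{p,q})=2g(T_{p,q})=(p-1)(q-1)$, which is precisely the juxtaposition you carry out. Your explicit choice of $T_{2,q}$ with $q$ odd and $q-1\geq\max(n_0,2)$ is a fine instantiation.
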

\smallskip

While being dominated by $\Aa (\cdot)$ in the case of knots, the
quandle invariant $\Aa_\Qq(\cdot)$ may provide a better lower bound on
the genus of $k$--component links, $k\geq 2$. Moreover,
$\Aa_\Qq(\cdot)$ can provide a sharp lower bound of the knot genus,
and can distinguish knots sharing both the genus and the Alexander
polynomial. 
More precisely, in Section~\ref{g-arbitrario} we prove the following 
Propositions:

\begin{prop}\label{nobounds}
For every $n\in\N$ there exists
a link $L$ such that $\Aa_\Qq(L)\geq n$ and $\Aa(L)=0$.
\end{prop}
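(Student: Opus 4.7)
The plan is to exhibit the simplest possible candidate, namely the unlink $L_n$ with $n+1$ components, and check both inequalities directly.

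For the first assertion, $L_n$ bounds a Seifert system consisting of $n+1$ pairwise disjoint embedded planar disks, whose first homology is trivial. Any Seifert matrix of $L_n$ is therefore the empty matrix, and with the convention $\det(\emptyset)=1$ one obtains $\Delta(L_n)(t)=\det\bigl(S(L_n)-tS(L_n)^T\bigr)=1$, which has breadth $0$. Hence $\Aa(L_n)=0$.

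For the second assertion, fix any quandle $\Xx=(\F(p,h(t)),\ast)\in\Qq_\Ff$ and consider the standard crossingless planar diagram $D$ of $L_n$. Every $\Pp_m$--cycle $\overline{z}$ for $L_n$ induces a labelling of $D$, but since $D$ has no crossings no coloring relation is imposed, so each of the $n+1$ arcs of $D$ (one per component) may be colored independently by an arbitrary element of $\Xx$. The space of $\Xx$--colorings of $(D,\overline{z})$ is thus canonically $\Xx^{n+1}$, an $(n+1)$--dimensional vector space over $\F(p,h(t))$. This yields $a_\Xx(L_n,\Pp_m,\overline{z})=n$, whence $\Aa_\Qq(L_n)\geq n$. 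In fact, combined with the bound $\Aa_\Qq(L_n)\leq\Aa_\Qq(L_n,\Pp_M)\leq 2g(L_n,\Pp_M)+n=n$ supplied by Corollary~\ref{lowerbound}, this forces the equality $\Aa_\Qq(L_n)=n$.

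There is no substantial technical obstacle to this argument; the content of the proposition is simply the observation that adding split unknotted components keeps $\Delta$ trivial (the empty Seifert matrix contributes a factor of $1$) while linearly enlarging the $\Xx$--coloring space, so arbitrarily many components yield arbitrarily large gaps between $\Aa_\Qq$ and $\Aa$.
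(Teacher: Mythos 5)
Your route is genuinely different from the paper's and, as a strategy, it works: the paper does not use the unlink, but instead takes the split link $L_h$ whose $h$ components are all copies of a genus--$1$ knot $K_0$ with $\Aa_\Qq(K_0)=2$, and computes $\Aa_\Qq(L_h)=3h-1$ via the additivity formula for colorings of split links (Lemma~\ref{link:add}), while $\Aa(L_h)=0$ because $\Delta(L_h)=0$. Your choice of the $(n+1)$--component unlink is even more economical: since a crossingless diagram imposes no coloring relations, $a_\X(L_n,\Pp_m,\overline{z})=n$ for every $\X$ and every cycle, and in fact this is just the general identity $a_\X(L,\Pp,\overline{0})=k-1$ noted in the Introduction, valid for \emph{any} $(n+1)$--component link. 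What the paper's more elaborate example buys is a value of $\Aa_\Qq$ ($=3h-1$) strictly exceeding this trivial floor of $k-1$, which is what makes the subsequent remark about genus bounds for $L_h$ meaningful; for the bare statement of Proposition~\ref{nobounds}, your example suffices.

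There is, however, one factual error you should fix. The Alexander polynomial $\Delta(L)(t)=\det\left(S(L)-tS(L)^T\right)$ is computed from a \emph{connected} Seifert surface of $L$ (this is the surface relevant to $g(L)=g(L,\Pp_m)$), whose first homology has rank $2g+k-1$; for the $(n+1)$--component unlink this rank is $n>0$, so the Seifert matrix is the $n\times n$ zero matrix, not the empty matrix, and $\Delta(L_n)(t)=0$, not $1$. Your "empty Seifert matrix" argument conflates the system of $n+1$ disjoint disks (which is a Seifert system for the maximal partition $\Pp_M$) with a connected Seifert surface for $L_n$. The conclusion $\Aa(L_n)=0$ nevertheless survives, because the paper explicitly adopts the convention $\br 0=0$ (and indeed any split link with at least two components has vanishing Alexander polynomial, which is exactly how the paper's own example gets $\Aa=0$). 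So the proof is salvageable by replacing your computation of $\Delta(L_n)$ with the observation that $\Delta(L_n)=0$ and invoking the stated convention on the breadth of the null polynomial.
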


\begin{prop}\label{betterinvariant}
Let us fix $g\geq 1$. Then, for every $r_1,r_2$ such that
$1\leq r_1\leq r_2 \leq 2r_1 \leq 2g$, there exist
  knots $K_1$ and $K_2$ such that the following conditions hold: 
$$g(K_1)=g(K_2)=g, \qquad  \Delta (K_1)=\Delta (K_2)
\ {\rm (whence}\  \Aa(K_1)=\Aa(K_2){\rm )}\ ,
$$
while $$\Aa_\Qq(K_1)=r_1, \qquad \Aa_\Qq(K_2)= r_2\ .$$ 
Moreover, we
  can require that both $\Aa_\Qq(K_1)$ and $\Aa_\Qq(K_2)$ are realized by
  means of some dihedral quandle with cycle $\overline{z}=1$. 
\end{prop}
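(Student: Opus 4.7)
Plan: The strategy is to construct, for each integer partition $\lambda=(a_1,\dots,a_r)$ of $g$ with $a_j\ge 1$, a genus-$g$ knot $K_\lambda$ whose Alexander module is $A(K_\lambda)\cong\bigoplus_{j=1}^{r}\Lambda/(q^{a_j})$, where $q(t)=t^2-t+1$ is the Alexander polynomial of the trefoil. Concretely, fix for each $a\ge 1$ a knot $J_a$ of genus exactly $a$ with \emph{cyclic} Alexander module $\Lambda/(q^a)$ (realised for instance by a Seifert matrix of size $2a$ whose Smith normal form over $\Lambda$ is $\mathrm{diag}(1,\dots,1,q^a)$, or via a suitable plumbing/satellite construction over the trefoil), and set $K_\lambda:=J_{a_1}\#\cdots\# J_{a_r}$. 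By additivity under connected sum, $g(K_\lambda)=g$, $\Delta(K_\lambda)=q^g$, and the invariant-factor decomposition gives $e_j(K_\lambda)=q^{a_j}$ for $1\le j\le r$, $e_i(K_\lambda)=1$ for $i>r$. The principal technical input, and the main obstacle, is the existence of the building blocks $J_a$ with minimal genus; once these are in hand the rest is a routine computation.

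To evaluate $\Aa_\Qq(K_\lambda)$, I would apply Theorem~\ref{inoue:teo} to the dihedral quandle $\X=(\F_3,\ast)$, i.e.~$h(t)=t+1$, with the constant cycle $\overline z=\overline 1$. Since $q(t)\equiv(t+1)^2\pmod 3$, each $e_j^{(3)}(K_\lambda)(t)\equiv(t+1)^{2a_j}$ vanishes at $t=-1$, so every one of the $r$ summands $\Lambda_3/(e_j^{(3)}(t),t+1)$ is isomorphic to $\F_3$ and contributes $1$ to the total $\F_3$-dimension; hence $a_\X(K_\lambda,\overline 1)=r$ and $\Aa_\Qq(K_\lambda)\ge r$. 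For the matching upper bound, observe that for an arbitrary $\X'=(\F(p,h(t)),\ast)\in\Qq_\Ff$ and any cycle $\overline z$, the irreducibility of $h(t)$ in $\Lambda_p$ makes the quotient $\Lambda_p/(e_i^{(p)}(t^z),h(t))$ either $0$ or isomorphic to the field $\F(p,h(t))$, contributing at most $1$ to the $\F(p,h(t))$-dimension. Since $A^{(p)}(K_\lambda)\cong\bigoplus_j\Lambda_p/(q^{a_j}\bmod p)$ has exactly $r$ non-trivial invariant factors for every odd prime $p$ (the summands with indices $j\ne k$ cannot merge via the Chinese Remainder Theorem, as $\gcd(q^{a_j}\bmod p,q^{a_k}\bmod p)\ne 1$), Theorem~\ref{inoue:teo} yields $a_{\X'}(K_\lambda,\overline z)\le r$ for every admissible $(\X',\overline z)$. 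Therefore $\Aa_\Qq(K_\lambda)=r$, and this value is attained by the dihedral quandle at $p=3$ with $\overline z=\overline 1$, as required by the ``moreover'' clause.

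To conclude, given $(r_1,r_2)$ with $1\le r_1\le r_2\le 2r_1\le 2g$, note that $\Aa_\Qq(K)$ is always bounded above by the number of non-trivial invariant factors of $A(K)$, and the latter is at most $g$ (each non-trivial invariant factor is palindromic up to units and satisfies $e_i(1)=\pm 1$, whence its breadth is at least $2$, so the total breadth $\le 2g$ forces at most $g$ factors). In particular $r_1,r_2\le g$, and one may pick partitions $\lambda_1,\lambda_2$ of $g$ with $|\lambda_i|=r_i$ parts (any partition into $r_i$ positive parts summing to $g$ works, e.g.~$(g-r_i+1,1,\dots,1)$); setting $K_i:=K_{\lambda_i}$ yields two knots with $g(K_i)=g$, $\Delta(K_1)=\Delta(K_2)=q^g$, $\Aa_\Qq(K_i)=r_i$, both realised by the dihedral quandle $(\F_3,\ast)$ with $\overline z=\overline 1$. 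The only delicate point in the whole argument is Step~1, namely the explicit realisation of each $J_a$ with minimal genus $a$ and cyclic module $\Lambda/(q^a)$; everything else is direct bookkeeping with Theorem~\ref{inoue:teo} and the invariant-factor formalism.
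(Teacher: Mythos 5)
Your construction breaks down on the upper half of the allowed range of $r_2$. The hypothesis is $1\le r_1\le r_2\le 2r_1\le 2g$, so $r_2$ may be as large as $2g$; already for $g=1$ the proposition asks for a pair with $\Aa_\Qq(K_1)=1$ and $\Aa_\Qq(K_2)=2$. Your knots $K_\lambda$ have $\Aa_\Qq(K_\lambda)=r$ equal to the number of parts of a partition of $g$ into positive parts, hence $\Aa_\Qq(K_\lambda)\le g$, and no partition of $g$ into $r_2>g$ positive parts exists. You try to dismiss this by arguing that $\Aa_\Qq(K)$ is bounded by the number of non-trivial invariant factors of $A(K)$, which you bound by $g$ because each such factor allegedly has breadth at least $2$. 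Both halves of this argument fail: the invariant factors relevant to $\Aa_\Qq$ are those of the $\Lambda_p$--modules $A^{(p)}(K)$, not of $A(K)$ (this is exactly the subtlety stressed in Remark~\ref{inoue:rem}), and over $\Lambda_p$ a non-trivial invariant factor can have breadth $1$ --- for instance $t+1$, which is palindromic up to units and does not vanish at $t=1$ (your breadth-$\ge 2$ argument uses $2a=\pm1$ having no solution, which is true over $\Z$ but false over $\Z_p$). Concretely, the paper's genus--$1$ knots $K_p$ of Section~\ref{g=1} satisfy $\Aa_\Qq(K_p)=2>1=g(K_p)$, so $A^{(p)}(K_p)$ has two invariant factors divisible by $t+1$; the bound $\Aa_\Qq(K)\le g$ is simply false, and your construction cannot reach the values $g<r_2\le 2g$ that the statement requires.

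There is a second, smaller gap: the building blocks $J_a$ (genus exactly $a$ with cyclic Alexander module $\Lambda/(q^a)$) are asserted rather than constructed. This part is repairable --- Levine's realization theorem \cite{levine} yields a knot with cyclic module $\Lambda/(\Delta)$ for any symmetric $\Delta$ with $\Delta(1)=\pm1$, and the standard Seifert--matrix realization has genus $\br\Delta/2$ --- but as written it is a hole, and you flag it yourself. For the sub-range $r_2\le g$ your computation of $\Aa_\Qq(K_\lambda)=r$ via Theorem~\ref{inoue:teo} is correct (the reduction-mod-$p$ issue is harmless here because all invariant factors are powers of the single polynomial $q$, so the invariant-factor structure survives reduction). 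The paper avoids both problems by a different choice of blocks: it takes the explicit genus--$1$ knots $K,K'$ of Corollary~\ref{diversi:cor}, which share the Alexander polynomial but have $\Aa_\Qq(K)=2$ and $\Aa_\Qq(K')=1$, both realized by one dihedral quandle with $z=1$, together with a genus--$1$ knot $K''$ with trivial Alexander polynomial, and then uses additivity of $a_\X$ under connected sum (Lemma~\ref{additive}) with multiplicities $2r_1-r_2$, $r_2-r_1$, $g-r_1$. Having a genus--$1$ block that contributes $2$ to $a_\X$ is exactly what lets $\Aa_\Qq$ climb all the way to $2r_1$.
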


\subsection{Further properties of the invariant $\Aa_\Qq$}\label{performance2}
Let $L=K$ be a knot, and let us look for proper subfamilies of
$\Qq_\Ff$ that carry the relevant information for computing
$\Aa_\Qq(K)$. 
In Lemma~\ref{z=1} we show that 
$\Aa_\Qq(K)$ is completely determined by the number of colorings
relative to the cycle $\overline{z}=1$: more precisely, we show that
for every knot $K$ there exists $\X\in\Qq_\Ff$ such that $\Aa_\Qq
(K)=a_\X (K,1)$.  In particular we can set
$$
\delta(K) := \inf \{n\in \N^*\  | \ \Aa_\Qq(K)= \sup \{a_\X(K,1)\ | \ \X \in
\Qq_\Ff(n)\}\} \ \in\ \N^*\ .
$$
$$
\theta(K):= \inf \{t_\X\ | \ \Aa_\Qq(K)= a_\X(K,1),\ 
\X \in \Qq_\Ff(\delta(K)) \} \ \in \N^*\ .
$$

\smallskip

In Subsection~\ref{subfamily} we prove the following: 


\begin{prop}\label{min}
Let $K$ be a knot.
\begin{enumerate}
\item If $\theta (K)>1$, then 
$\theta(K)\geq \delta(K)+1$.
\item If  $\Aa_\Qq(K)=1$, then $ \delta(K)= 1$.
\item If $\Aa(K)>0$, then $$\delta(K)\leq \frac{\Aa(K)}{\max
    \{2,\Aa_\Qq(K)\} }\ .$$
\item
Suppose that $\Aa_\Qq(K)=\Aa(K)$ or $\Aa_\Qq(K)=\Aa(K)-1$. Then $\delta(K)=1$.
Moreover, there
  exist an odd prime $p$ and an element
$a\in\Z_p^\ast$ such that $(t-a)^{\Aa_\Qq(K)}$ divides $\Delta_1^{(p)}(K)(t)$ in
  $\Lambda_p$.
\item 
If $\Aa_\Qq(K)=\Aa(K)$, then $\delta(K)=1$ and
there exist an odd prime $p$ and an element $a\in\Z_p^\ast$ such that
$\Delta_1^{(p)}(K)(t)\doteq (t-a)^{\Aa(K)}$ in $\Lambda_p$.
\end{enumerate}
\end{prop}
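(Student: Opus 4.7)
The common algebraic input is Theorem~\ref{inoue:teo} applied to a knot $K$ with $\overline{z}=1$: since $\Lambda_p$ is a P.I.D.\ and $h(t)$ is irreducible, every summand $\Lambda_p/(e_i^{(p)}(K)(t),h(t))$ is either trivial or equal to $\F(p,h(t))$, according to whether $h(t)$ fails to divide or divides $e_i^{(p)}(K)(t)$ in $\Lambda_p$. Hence the key identity I will use throughout is
\[
a_{\X}(K,1)\;=\;\#\{\,i\geq 1\,:\,h(t)\mid e_i^{(p)}(K)(t)\text{ in }\Lambda_p\,\}.
\]
Two supporting facts: for a knot $K$, a Seifert matrix presents $A(K)$ by a square matrix, so $E_0(K)$ is principal and $\Delta_1^{(p)}(K)\doteq\pi_p(\Delta(K))$ in $\Lambda_p$; and the telescoping identity $\Delta_1^{(p)}(K)\doteq\prod_{i\geq 1} e_i^{(p)}(K)$ holds in $\Lambda_p$ up to units. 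For part~(1), I identify $t_{\X}$ with the multiplicative order $r$ of the image $\bar t$ of $t$ in $\F(p,h(t))^{\ast}$ via the explicit formula $a\ast^n b=\bar t^n a+(1-\bar t^n)b$. Since $h(t)$ is the minimal polynomial of $\bar t$ over $\F_p$, the degree $\delta(K)$ of $\F(p,h(t))/\F_p$ equals the multiplicative order of $p$ modulo $r$; when $r>1$ this order is at most $\varphi(r)\leq r-1$, giving $\theta(K)=r\geq \delta(K)+1$.

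For part~(2), Theorem~\ref{K=K} turns $\Aa_{\Qq}(K)=1$ into $\Aa(K)\geq 1$, so $\Delta(K)$ is a nonconstant integer polynomial. I will invoke the classical fact that any nonconstant $P\in\Z[t]$ admits a root modulo infinitely many primes (proved elementarily by inspecting the prime divisors of $P(P(0)\cdot Q\cdot n)$, where $Q$ is the product of any putative finite set of primes-with-a-root), applied to $\Delta(K)$, to produce an odd prime $p$ and $a\in\Z_p^{\ast}$ with $\Delta(K)(a)\equiv 0\pmod{p}$. The finitely many primes dividing $\Delta(K)(0)$ are excluded to ensure $a\neq 0$, and $a\neq 1$ is automatic since $\Delta(K)(1)=\pm 1$. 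Then $(t-a)$ divides $\Delta_1^{(p)}(K)\doteq\prod_i e_i^{(p)}(K)$ in $\Lambda_p$, hence at least one $e_i^{(p)}(K)$; at most one, by the upper bound $\Aa_{\Qq}(K)=1$. So the breadth-$1$ quandle $(\F(p,t-a),\ast)$ realizes the sup and $\delta(K)=1$.

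Parts~(3), (4), and (5) all rest on a single breadth bound. For any realizing $\X$ of breadth $\delta(K)$ there are exactly $\Aa_{\Qq}(K)$ indices $i$ with $h\mid e_i^{(p)}(K)$, so additivity of $\br$ in $\Lambda_p$ yields
\[
\Aa(K)\;\geq\;\br\bigl(\Delta_1^{(p)}(K)\bigr)\;=\;\sum_{i}\br\bigl(e_i^{(p)}(K)\bigr)\;\geq\;\Aa_{\Qq}(K)\cdot\delta(K).
\]
This proves (3) directly when $\Aa_{\Qq}(K)\geq 2$; in the subcase $\Aa_{\Qq}(K)=1$, part~(2) gives $\delta(K)=1$, and $\Aa(K)\geq 2$ for every knot (reciprocity combined with $\Delta(K)(1)=\pm 1$ rules out breadth one, since $a_0+a_1 t$ with $a_0=\pm a_1$ and $a_0+a_1=\pm 1$ has no integer solution), so $\delta(K)=1\leq\Aa(K)/2$. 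For (4), the same breadth bound (combined with (2) in the subcase $\Aa(K)=2$, $\Aa_{\Qq}(K)=1$, and using $\Aa(K)/(\Aa(K)-1)<2$ for $\Aa(K)\geq 3$) forces $\delta(K)=1$ whenever $\Aa_{\Qq}(K)\in\{\Aa(K),\Aa(K)-1\}$; a realizing $h$ is then $t-a$, and the divisibility of $\Aa_{\Qq}(K)$ of the $e_i^{(p)}(K)$'s by $(t-a)$ translates, after multiplication, to $(t-a)^{\Aa_{\Qq}(K)}\mid\Delta_1^{(p)}(K)$ in $\Lambda_p$. For (5), in the chain
\[
\br\bigl((t-a)^{\Aa(K)}\bigr)\;=\;\Aa(K)\;\leq\;\br\bigl(\Delta_1^{(p)}(K)\bigr)\;\leq\;\Aa(K),
\]
equality throughout forces $\Delta_1^{(p)}(K)\doteq(t-a)^{\Aa(K)}$.

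The principal obstacle is confined to part~(2): no linear factor of any $e_i^{(p)}(K)$ is supplied by the assumed higher-breadth realizing quandle, so one must be produced externally. The classical roots-modulo-primes lemma does exactly this, provided the values $a=0$ and $a=1$ are excluded; both exclusions are handled by the standard normalization of the Alexander polynomial of a knot ($\Delta(K)(0)$ a fixed nonzero integer, $\Delta(K)(1)=\pm 1$).
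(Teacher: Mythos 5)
Your proposal is correct and follows essentially the same route as the paper: all five parts rest on the identity $a_\X(K,1)=\#\{i : h(t)\mid e_i^{(p)}(K)(t)\}$ coming from Theorem~\ref{inoue:teo}, the breadth count $\Aa(K)\geq \Aa_\Qq(K)\cdot\delta(K)$ (the paper's Proposition~\ref{Alink}/Corollary~\ref{knot:ino}), and the production of a linear factor of $\pi_p(\Delta(K))$ for a suitable odd prime $p$. The only (harmless) variations are local: for (1) you bound $t_\X$ via the multiplicative order of $p$ modulo $r$ where the paper uses the divisibility $h(t)\mid p_{t_\X}(t)$ of Lemma~\ref{type}, and for (2) you invoke the classical roots-modulo-infinitely-many-primes fact where the paper derives the same conclusion from its Lemma~\ref{log}.
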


In Section \ref{g=1}, Corollary \ref{g=1completo}, we check
directly that if $g(K)=1$ then either $\Aa_\Qq(K)=0$ or $\Aa_\Qq(K)\in
\{ 1,2 \}$, and in the last case 
we have
$$(\delta(K),\theta(K))=(1,2) \ . $$

\smallskip

\begin{quest}\label{delta:quest}
{\rm
Let $n\in\N$ be fixed. Does a knot $K$ exist such that 
$\delta (K)\geq n$? 
(See Remark~\ref{delta:unbounded} for a brief discussion about this
issue).
} 
\end{quest}

\begin{ques}{\rm 
Is $\theta(K)$ bounded from above by an explicit function of $g(K)$ (or
$\Aa(K)$, or $\delta(K)$)?
}
\end{ques}


\section{Quandle invariants}\label{quandle} 
We briefly recall a few details about the definition of quandle
invariants of links and about our favourite family $\Qq_\Ff$ of finite
Alexander quandles.

\medskip


Let $\X=(X,*)$ be any finite quandle, $|X|=m$. For every $b\in X$, the
permutation of $X$ defined by $S_b\colon a\mapsto a*b$ has order
$o(b)$ that divides $m! \, $. If we denote by $t_\X$ the l.c.m.~of
these orders, then for every $a,b\in X$ we have $S_b^{t_\X}(a)=
a*^{t_\X}b=a$, that is $*^{t_\X}=*^0$, and it is readily seen that
$t_\X$ is in fact the type of $\X$, as defined in
the Introduction.

\subsection{Basic properties of finite Alexander quandles}
Let us now turn to our favourite Alexander quandles
$\X=(\F(p,h(t)),*)$ where $h(t)$ is an irreducible polynomial of
breadth $n\geq 1$ in $\Lambda_p$.
Hence $\F(p,h(t))$ is a finite field with $q=p^n$ elements.


For every $m\geq 0$ set $p_{m+1}(t)=\sum_{j=0}^{m}t^j\in \Z[t]$ and
$H_m (t)=1-t^m\in\Z[t]$, in such a way that $H_m (t)= (1-t)p_m(t)$ for
every $m\geq 1$ (when this does not arise ambiguities, we 
consider $p_m (t)$ (resp.~$H_m (t)$) also as elements of $\Z_p[t]$,
$\Lambda$ and $\Lambda_p$).  Also recall that $\overline{t}$ denotes
the class of $t$ in $\F(p,h(t))$.  An easy inductive argument shows
that for every $a,b \in \F(p,h(t))$ and every $m\geq 0$ we have
$$ a*^mb = \overline{t}^ma + H_m(\overline{t})b \ .$$


\begin{lem}\label{type} Let $\X=(\F(p,h(t)),*)$ be a
finite Alexander quandle as above, 
let $n=\br h(t)$ and set $q=p^n$. 
Then:
\begin{enumerate}
\item
$\X$ is trivial if and only if $h(t)\doteq t-1$.
\item If $\X$ is non--trivial, then $*^m=*^0$ (\emph{i.e.} $m$ is a
  multiple of $t_\X$) if and only if $h(t)$ divides $p_m(t)$ in
  $\Lambda_p$.
\item
$H_m (\overline{t})=0$ in $(\F_p,h(t))$
if and only if $m$ is a multiple of $t_\X$.
\item
Suppose that $\X$ is non--trivial. Then
$t_\X\geq n+1$.
Moreover,
$t_\X=n+1$ if and only $p_{n+1} (t)$ is irreducible in $\Z_p[t]$ and
$h(t)\doteq p_{n+1}(t)$ in $\Lambda_p$.  If this is the case, then $p_{n+1}(t)$ is
irreducible in $\Z[t]$, and $n+1$ is prime.
\end{enumerate}
\end{lem}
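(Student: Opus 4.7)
The plan is to base everything on the explicit formula
$a *^m b = \bar t^{\,m} a + H_m(\bar t)\, b$ (recalled just before the statement), which reduces each claim to polynomial divisibility in $\Lambda_p$.

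\smallskip

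For part (1), triviality $a*b=a$ for all $a,b$ unwinds to $(\bar t -1)a=(\bar t -1)b$ in $\F(p,h(t))$, hence (working in a field) to $\bar t=1$, i.e.\ $h(t)\mid (t-1)$ in $\Lambda_p$. Since $h(t)$ is irreducible of positive breadth and $t-1$ is irreducible of breadth $1$, this forces $h(t)\doteq t-1$. For part (2), the formula shows that $*^m=*^0$ amounts to $\bar t^{\,m}=1$ and $H_m(\bar t)=0$, and these two conditions are equivalent since $H_m(t)=1-t^m$. In turn, $H_m(\bar t)=0$ means $h(t)\mid (1-t)p_m(t)$ in $\Lambda_p$; in the non-trivial case part (1) gives $h(t)\not\doteq t-1$, so by irreducibility $h$ and $1-t$ are coprime in the U.F.D.\ $\Lambda_p$, and the divisibility passes to $h(t)\mid p_m(t)$. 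Part (3) is then immediate: $H_m(\bar t)=1-\bar t^{\,m}$, so its vanishing is exactly $*^m=*^0$, hence by definition of $t_\X$ it is equivalent to $t_\X\mid m$ (the trivial case $t_\X=1$ being vacuous since then $\bar t=1$).

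\smallskip

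For part (4), the starting observation is that $p_m(t)=1+t+\cdots+t^{m-1}$ has breadth $m-1$, so any multiple of $h(t)$ in $\Lambda_p$ with breadth equal to $m-1$ forces $m-1\ge n$. Combined with (2) this yields $t_\X\ge n+1$. In the extremal case $t_\X=n+1$, the breadths of $h(t)$ and $p_{n+1}(t)$ both equal $n$, so $h(t)\mid p_{n+1}(t)$ with equal breadths forces $h(t)\doteq p_{n+1}(t)$ in $\Lambda_p$; hence $p_{n+1}(t)$ is irreducible in $\Lambda_p$, and since $p_{n+1}(0)=1\ne 0$ (so no power of $t$ can appear as a factor) this is equivalent to irreducibility in $\Z_p[t]$. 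The converse, that $h(t)\doteq p_{n+1}(t)$ with $p_{n+1}$ irreducible in $\Z_p[t]$ gives $t_\X=n+1$, comes directly from (2).

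\smallskip

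It remains to deduce from irreducibility of $p_{n+1}(t)$ in $\Z_p[t]$ the corresponding irreducibility in $\Z[t]$ and the primality of $n+1$. For the first, a standard reduction mod $p$ argument works: $p_{n+1}$ is monic, so in any factorization $p_{n+1}=fg$ in $\Z[t]$ the mod-$p$ reductions $\bar f,\bar g$ keep their degrees, contradicting irreducibility in $\Z_p[t]$ unless one of $f,g$ is constant. For the second, I would use the cyclotomic-style identity $p_{n+1}(t)(t-1)=t^{n+1}-1$: if $n+1=ab$ with $2\le a<n+1$, then $t^a-1$ divides $t^{n+1}-1$, hence $p_a(t)$ divides $p_{n+1}(t)$ in $\Z[t]$, with $1\le \deg p_a=a-1<n$, contradicting the already established irreducibility of $p_{n+1}$ in $\Z[t]$. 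The only mildly delicate point is the Laurent vs.\ polynomial passage for irreducibility in the equality case, which is resolved by the observation $p_{n+1}(0)\ne 0$.
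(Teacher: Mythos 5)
Your proof is correct and follows essentially the same route as the paper's: the explicit formula $a*^mb=\overline{t}^{\,m}a+H_m(\overline{t})b$, reduction to divisibility of $p_m(t)$ by $h(t)$ after stripping the coprime factor $1-t$, and the breadth comparison $\br h(t)\leq \br p_{t_\X}(t)=t_\X-1$ for part (4). The only differences are that you spell out details the paper leaves implicit (the $\Lambda_p$ versus $\Z_p[t]$ irreducibility passage via $p_{n+1}(0)\neq 0$, and the factorization $p_{n+1}(t)=p_a(t)(1+t^a+\cdots+t^{(b-1)a})$ proving primality of $n+1$), and these details are accurate.
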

\begin{proof}
  (1) If $h(t)\doteq t-1$, then $\overline{t}=1$ in $\F(p,h(t))$, so
  $\overline{t} a+(1-\overline{t})b=a$ for every $a,b\in \F(p,h(t))$.
  On the other hand, if $\X$ is trivial, then
  $(1-\overline{t})(b-a)=0$ for every $a,b\in \F(p,h(t))$, so
  $1-\overline{t}=0$. This implies that $t-1$ divides $h(t)$ in
  $\Lambda_p$, so $h(t)\doteq t-1$ by irreducibility of $h(t)$.

(2) We have $ a*^mb= \overline{t}^ma +
(1-\overline{t})p_m(\overline{t})b= a$ if and only if
$(\overline{t}-1)p_m(\overline{t})(a-b)=0$. By point~(1)
this equality holds for every
$a,b$ if and only if $p_m(\overline{t})=0$ in $(\F_p,h(t))$,
\emph{i.e.}~if and only if $h(t)$ divides $p_m(t)$ in $\Lambda_p$.

(3) By point~(1), $\X$ has type $1$ (\emph{i.e.}~it is trivial) if and
only if $H_1(\overline{t})=0$, so we may suppose that $\X$ is
non--trivial. In this case, since $H_m
(\overline{t})=(1-\overline{t})p_m(\overline{t})$ and
$1-\overline{t}\neq 0$ in $(\F_p,h(t))$, point~(3) is an immediate
consequence of~(2).

(4) 
By point~(2) the polynomial $h(t)$
divides $p_{t_\X} (t)$ in $\Lambda_p$, so $n=\br h(t)\leq \br
p_{t_\X}= t_\X -1$, whence the first statement.
Moreover, $t_\X=n+1$ if and only if $h(t)$ divides $p_{n+1}(t)$ in $\Lambda_p$.  
Since $\br h(t)=\br p_{n+1}(t)$, this condition holds if
and only if $p_{n+1}(t)\doteq h(t)$, and this implies that
$p_{n+1}(t)$ is irreducible in $\Lambda_p$, whence in $\Z_p[t]$.
Being monic, if $p_{n+1}(t)$ is irreducible in $\Z_p[t]$, then it is
irreducible also in $\Z[t]$, and this implies in turn that $n+1$ is
prime.
\end{proof}

\medskip

The simplest non--trivial quandles in our family $\Qq_\Ff$ are the
{\it dihedral quandles} $\Dd_p=(\F(p,1+t),\ast)$. In this
case the quandle operation takes the form $a*b=2b-a$, in terms of the field
operations of $\F(p,h(t))=\Z_p$. Dihedral
quandles are \emph{involutory}, \emph{i.e.}~their type is equal to 2.


\begin{remark}\label{many-quandle}{\rm If $q=p^n$, the finite
field $\F_q$, which is unique up to isomorphism, supports in general 
non--isomorphic quandle structures.
This phenomenon shows up already when $n=1$, \emph{i.e.}~when
considering Alexander quandles in $\Qq_\Ff(1)$. For
    every odd prime $p$ and every $a\in\Z_p^*$, 
    let $h_a(t)= a+t$, and let $\X_{p,a}=(\F(p,h_a(t)),\ast)$ 
be the corresponding 
Alexander quandle. We have seen in Lemma~\ref{type}--(1)
that $\X_{p,a}$ is trivial if and only if $a=p-1$. On the other hand, if $a=1$
then $\X_{p,a}$ is a dihedral quandle, and its type is equal to 2. 
By Lemma~\ref{type}--(4), if $a\notin \{1,p-1\}$ then $t_{\X_{p,a}}>2$, 
so the quandles
$\X_{p,a}$, $\X_{p,1}$ and $\X_{p,p-1}$ are pairwise non--isomorphic. For example,
Lemma~\ref{type}--(2) implies that $t_{\X_{p,a}}=3$ if and only if $a\neq 0,-1$ 
and
$-a$ is a root of
$t^2+t+1$, \emph{i.e.}~if and only $p\neq 3$ and the equation 
$a^2-a+1=0$ has a root
in $\Z_p$ (such a root is necessarily distinct from $0,-1$). 
The discriminant of this quadratic equation
is equal to $-3$, so
we can conclude that $t_{\X_{p,a}}=3$ if and only
if $p\neq 3$, the element $p-3$ admits a square root $c$ in $\Z_p$,
and  $a=(1\pm c)(k+1)$. 

Also observe that, if $p>n(n-1)+2$, then there exists 
$a\in\Z_p\setminus\{0,-1\}$
such that $-a\in\Z_p$ is not a root
of $p_i(t)\in\Z_p[t]$ for every $i=1,\ldots,n$.  
By Lemma~\ref{type}--(2), this implies that the type of $\X_{p,a}$
exceeds $n$, and this shows that $\Qq_\Ff (1)$ contains quandles
of arbitrarily large type.

Here is another construction of non--isomorphic quandles supported by
the same finite field $\F_q$. Assume for example that both $1+t^m$ and
$p_{m+1}(t)$ are irreducible in $\Z_p[t]$. By Lemma~\ref{type}--(4),
the type of $(\F(p,p_{m+1}(t)),*)$ is equal to $m+1$. On the other
hand, since $(1+t^m)p_{m+1}(t)=p_{2m}(t)$, points~(4) and (2) of
Lemma~\ref{type} imply respectively that the type of $(\F(p,1+t^m),*)$
is bigger than $m$ and divides $2m$, and is therefore equal to
$2m$. An example of this kind is
obtained by taking $m=2$ and $p=11$, so that we have two
non--isomorphic quandle structures (of type $3$ and $4$ respectively)
on $\F_q$, where $q=11^2$.}
\end{remark}

\subsection{Quandle colorings of links}
Let $L=L_1\cup\ldots\cup L_h=K_1\cup\ldots\cup K_k$ be an oriented
partitioned link with $k$ components, where $\Pp=(L_1,\ldots,L_h)$ is
a partition of $L$ into sublinks, and let $D$ be any diagram of $L$.  A ($\Z_{t_\X}$--valued) $\Pp$--cycle on $L$ is a map
$\overline{z}\colon\{1,\dots , h\}\to \Z_{t_\X}$, where $\overline{z}(i)$ labels every
component of the sublink $L_i$.  Such a cycle naturally descends to
$D$. An {\it arc} of $D$ is any embedded open interval in $D$ whose
endpoints are undercrossing.  An $\X$-coloring of
$(D,\Pp,\overline{z})$ assigns to each arc of $D$ a ``color''
belonging to $\X$ in such a way that at every crossing we see the
local configuration shown in Figure \ref{quandle2}.  Here $a,b\in\X$
are colors, and $z$ refers to the value assigned by $\overline{z}$ to
the sublink that contains the overcrossing arc.

\begin{figure}[htbp]
\begin{center}
 \includegraphics[height=4cm]{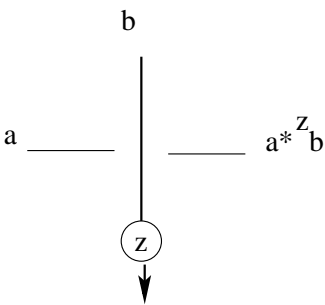}
\caption{\label{quandle2} The local configuration of a quandle coloring.}
\end{center}
\end{figure} 

\begin{remark}
{\rm
The case when $\X$ is a dihedral quandle is particularly simple to handle
because in this case orientations become immaterial from the very
beginning, in the sense that the rule of Figure \ref{quandle2} is 
well--defined even if one forgets  the orientation of the overcrossing arc.
}
\end{remark}

The following Proposition shows that 
$$c_\X(L,\Pp,\overline{z}):=c_\X(D,\Pp,\overline{z})$$ is a well defined
invariant of $(L,\Pp, \overline{z})$ (up to isotopy of oriented,
partitioned and labelled links), where $c_\X(D,\Pp,\overline{z})$ is
the number of $\X$-colorings of $(D,\Pp,\overline{z})$.

\begin{prop}\label{ishii:gen}
Let $(L,\Pp,\overline{z})$ be a partitioned link endowed with a fixed
$\Z_{t_\X}$--cycle, and let $D,D'$ be diagrams of $L$. Then we have
$$
c_\X(D,\Pp,\overline{z})=c_\X(D',\Pp,\overline{z})\ .
$$
\end{prop}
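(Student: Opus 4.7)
The standard strategy is to invoke the oriented version of Reidemeister's theorem: any two diagrams $D, D'$ of the same oriented link $L$ are related by a finite sequence of planar isotopies and oriented Reidemeister moves of types R1, R2, R3. Planar isotopies preserve the combinatorial structure of arcs, crossings and the induced partition labels, hence preserve the set of $\X$-colorings, so it suffices to exhibit, for each Reidemeister move, a bijection between the $\X$-colorings of $(D, \Pp, \overline{z})$ before and after the move. Such a bijection will be the identity on the arcs that meet the boundary of the small disk in which the move takes place, and uniquely determined inside the disk.

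The whole verification rests on three identities satisfied, for any fixed $z\in\Z_{t_\X}$ and any pair $z_1,z_2\in\Z_{t_\X}$, by the iterated quandle operations $*^{z}$ on $X$:
\begin{enumerate}
\item[(i)] $a *^z a = a$;
\item[(ii)] $S_b^{\,z} = (S_b)^z$ is a bijection of $X$;
\item[(iii)] $(a *^{z_1} b) *^{z_2} c = (a *^{z_2} c) *^{z_1} (b *^{z_2} c)$.
\end{enumerate}
Identity (i) is immediate by iterating (Q1); (ii) follows from (Q3), since $(S_b)^{t_\X}=\mathrm{id}$ and therefore each power of $S_b$ is a bijection; (iii) is obtained by iterating the conjugation relation $S_c S_b = S_{S_c(b)} S_c$, which is just a rewriting of (Q2).

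These three identities match the three moves. For R1, the new arc introduced inside the kink has label $z$ equal to that of the component the kink sits on and receives the color $a *^z a$, which by (i) equals $a$, the color of the arc from which it was detached. For R2, the two crossings produced (or destroyed) involve the same overcrossing strand, say with color $b$ and label $z$; since one of the two crossings is positive and the other negative, the underpassing arc is successively altered by $S_b^{\,z}$ and $S_b^{-z}$, so by (ii) the compound operation is the identity and the extension of a given boundary coloring to the interior is unique. For R3, three strands meet in a triangle: since exactly two of them play the role of overcrossing, only two distinct labels $z_1,z_2$ appear among the three crossings, and tracing the colors $a,b,c$ of the incoming arcs through the triangle on both sides of the move leads exactly to the mixed identity (iii).

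The main obstacle is R3: unlike R1 and R2, the local picture involves two different labels at different crossings, so the invariance cannot simply be reduced to the quandle axioms of $(X, *^z)$ for a single fixed $z$. Establishing (iii) as a consequence of iterating the conjugation relation coming from (Q2) is the conceptual heart of the argument; the remaining steps --- including the orientation bookkeeping in the various oriented versions of R1 and R2, and the choice of signs in the local rule of Figure \ref{quandle2} --- reduce to routine local checks.
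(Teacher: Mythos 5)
Your argument is correct, but it is genuinely different from the one in the paper. The paper settles this proposition essentially by citation: it invokes the invariance results of \cite{ishii1,ishii2}, where the count of colorings is shown to be unchanged for diagrams of trivalent spatial graphs under both Reidemeister and Whitehead moves, and observes that the link case is the vertex-free special case, the only point requiring comment being that a $\Z_{t_\X}$--cycle on $D$ transports canonically to $D'$. You instead give a self-contained move-by-move verification, and your identification of the three identities (i)--(iii) as the exact algebraic counterparts of R1, R2, R3 is sound; in particular the mixed identity $(a *^{z_1} b) *^{z_2} c=(a *^{z_2} c) *^{z_1}(b *^{z_2} c)$, obtained by iterating the conjugation form $S_cS_b=S_{S_c(b)}S_c$ of (Q2) first in the exponent $z_2$ and then in $z_1$, is exactly what the R3 triangle requires, since the three crossings there carry only the two labels of the two strands that act as over-strands. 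What your route buys is independence from the external reference and a transparent explanation of why labelled colorings remain consistent even when distinct labels meet at an R3 configuration; what it costs is the deferred bookkeeping, which is genuinely nonempty: one must run through a generating set of \emph{oriented} Reidemeister moves (so identity (iii) is also needed with negative exponents, which is harmless since $S_b^{-z}=S_b^{t_\X-z}$), and in the R1 variant where the over-strand is the newly created arc the coloring equation reads $b=a*^{\pm z}b$, whose unique solvability uses injectivity of $S_b^{\pm z}$ together with (i), not (i) alone. The paper's citation-based argument avoids these checks at the price of appealing to a statement proved in greater generality than needed.
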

\begin{proof} 
  Let us briefly describe how our statement can be deduced from the
  results proved in~\cite{ishii1, ishii2} (in~\cite{ishii1} only the
  case of involutory quandles is considered, but such a restriction is
  overcome in~\cite{ishii2}). In order to check that $c_\X
  (D,\Pp,\overline{z})$ is independent of $D$ it is sufficient to
  prove the statement in the case when $D$ and $D'$ are related to
  each other by a classical Reidemeister move on oriented link
  diagrams. In the cited papers the authors consider indeed a more
  general situation, where $D$ and $D'$ are trivalent spatial graphs,
  and $D'$ is obtained from $D$ either via a Reidemeister move, or via
  a Whitehead's move (by the way, this ensures that $D$ and $D'$ have
  ambient--isotopic regular neighbourhoods in $S^3$ -- see also the
  discussion in Subsection \ref{tunnel:sub} below). In our case we
  have to deal only with the usual Reidemeister moves. Moreover, every
  $\Z_{t_\X}$--cycle on $D$ canonically defines a $\Z_{t_\X}$--cycle on $D'$, so
  the arguments in~\cite{ishii1, ishii2} prove the claimed result.
  \end{proof}

Let $\X \in \Qq_\Ff$ be a quandle of type $k$ supported by the field
$\F_q$.  It is clear that the $\X$-colorings of a diagram
$(D,\Pp,\overline{z})$ as above correspond to the solutions of a
linear system over $\F_q$. Therefore, the space of such colorings
(which contains all the constant colorings) is a $\F_q$-vector space
of dimension $d_\X(D,\Pp,\overline{z})\geq 1$, so
the whole information about $c_\X(D,\Pp,\overline{z})$
is encoded by the natural number
$$a_\X(D,\Pp,\overline{z}):= d_\X(D,\Pp,\overline{z})-1\ .$$
By Proposition~\ref{ishii:gen}, this number is a well defined isotopy
invariant of oriented and $\Z_{t_\X}$-labelled partitioned links.
As a consequence, the following polynomial, that collects all such
``monomial'' invariants, is an invariant of oriented partitioned
links:
$$ \Phi_\X(L,\Pp)(t):= \sum_{\overline{z}} t^{a_\X(L,\Pp,\overline{z})} \in \N[t] \ . $$
Also observe that by the very definitions we have
$$
\deg \Phi_\X(L,\Pp)(t)=\sup_{\overline{z}} a_\X(L,\Pp,\overline{z})\ ,
$$
whence 
\begin{equation}\label{tunnel1}
\Aa_\Qq(L,\Pp)=\sup_{\X\in\Qq_\Ff} \deg \Phi_\X(L,\Pp)(t)\ .
\end{equation}

\begin{lem}\label{unoriented}
  Let $L$ be an oriented link, and let $\Pp_M$ be its maximal
  partition.  Then the polynomial $ \Phi_\X(L,\Pp_M)(t)$ is an
  invariant of $L$ as an \emph{unoriented} link. As a consequence, 
$\Aa_\Qq(L,\Pp_M)$ is an
  invariant of $L$ as an \emph{unoriented} link.
\end{lem}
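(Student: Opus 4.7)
The plan is to reduce the claim to reversing the orientation of a single component and to show that, at the level of $\X$--colorings, this operation corresponds to a relabeling of $\Pp_M$--cycles. Since any two orientations of the unoriented link $L$ differ by a finite sequence of single--component reversals, it suffices to fix $\X\in\Qq_\Ff$ and, letting $L'$ denote the link obtained from $L$ by reversing the orientation of a single component $K_i$, to establish the equality $\Phi_\X(L',\Pp_M)(t)=\Phi_\X(L,\Pp_M)(t)$.

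The technical heart is a local analysis at each crossing. In the Alexander quandle $\X=(\F(p,h(t)),*)$, the rule of Figure~\ref{quandle2} takes the form $c = a*^z b = t^z a + (1-t^z) b$, where $b$ is the over--color, $z$ is the label of the over--component, and $a,c$ are the under--colors on the two sides of the over--arc. Since $S_b\colon x\mapsto tx+(1-t)b$ is invertible (with inverse $x\mapsto t^{-1}x+(1-t^{-1})b$), the operations $*^z$ are well defined for every $z\in\Z$, and $*^{-z}$ is the $z$--fold iterate of $S_b^{-1}$. I would then verify from Figure~\ref{quandle2} that reversing the orientation of the over--arc swaps the roles of $a$ and $c$, turning the rule into $a = c*^z b$, i.e.\ $c = a*^{-z} b$; while reversing the orientation of the under--arc simultaneously swaps $a$ with $c$ and flips the sign of the crossing, the two effects canceling so that the rule is unchanged.

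Globally, reversing $K_i$ therefore replaces $*^{z_i}$ by $*^{-z_i}$ at every crossing (self--crossings included) at which $K_i$ is the over--arc, and leaves all other coloring equations untouched. Because $\Pp_M$ is maximal, $z_i$ is an independent label, and the resulting system of equations coincides with the one defining $\X$--colorings of $(L,\Pp_M,\sigma_i(\overline z))$, where $\sigma_i$ denotes the involution of $\Pp_M$--cycles that negates the $i$--th coordinate modulo $t_\X$. Thus one obtains a canonical bijection yielding $a_\X(L',\Pp_M,\overline z)=a_\X(L,\Pp_M,\sigma_i(\overline z))$. Since $\sigma_i$ is an involution of the set of $\Pp_M$--cycles, summing over $\overline z$ gives $\Phi_\X(L',\Pp_M)(t)=\Phi_\X(L,\Pp_M)(t)$, and the analogous conclusion for $\Aa_\Qq(L,\Pp_M)$ then follows at once from equation~\eqref{tunnel1}.

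The main obstacle I expect is the convention check in the second step: one must verify from Figure~\ref{quandle2} that only the over--arc orientation enters nontrivially into the coloring rule, and that its reversal converts $*^z$ into $*^{-z}$. The appeal to the maximal partition is also essential: for a coarser $\Pp$, reversing one component of a sublink $L_j$ would force the shared label $z_j$ to be negated at every other component of $L_j$ as well, which in general is not a symmetry of the equations. This is what confines the unoriented invariance to $\Pp_M$.
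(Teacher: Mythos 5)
Your proposal is correct and follows essentially the same route as the paper: the key identity $a_\X(\epsilon L,\Pp_M,\overline z)=a_\X(L,\Pp_M,\epsilon\overline z)$ (which the paper asserts for an arbitrary sign function $\epsilon$ and leaves unverified) is exactly your single--component statement $a_\X(L',\Pp_M,\overline z)=a_\X(L,\Pp_M,\sigma_i(\overline z))$, iterated, and your crossing--level analysis supplies the justification the paper omits. The only cosmetic difference is that you conclude by summing over all cycles using that $\sigma_i$ permutes them, whereas the paper groups cycles into $\epsilon$--equivalence classes first; both yield the same invariance of $\Phi_\X(L,\Pp_M)(t)$.
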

\begin{proof}
  Let $\Pp_M=(K_1,\ldots,K_h)$ be the maximal partition of $L$, and
  for every $\epsilon\colon \{1,\dots, h\} \to \{\pm 1 \}$ let us
  denote by $\epsilon L$ the link $\epsilon(1) K_1 \cup \dots \cup
  \epsilon(h)K_h$, where as usual the symbols $K$ and $-K$ denote
  knots having the same support and opposite orientations. We also
  define the cycle $\epsilon \overline{z}$ by setting $(\epsilon
  \overline{z}) (j) = \epsilon(j)\overline{z}(j)$. It is not hard to
  verify that for every cycle $\overline{z}$ and every $\epsilon$ we
  have
$$ a_\X(\epsilon L,\Pp_M,\overline{z})= 
a_\X(L,\Pp_M,\epsilon \overline{z}) \ . $$
\smallskip

We now say that two cycles $\overline{z}$ and $\overline{z}'$ are {\it
  equivalent} if and only if there exists $\epsilon$ such that
$\overline{z}'=\epsilon \overline{z}$, and we denote by
$[\overline{z}]$ the equivalence class of $\overline{z}$. The previous
discussion shows that the polynomials
$$\Phi_\X(L,\Pp,[\overline{z}])(t):= \sum_{\overline{z}'\in [\overline{z}]} 
t^{a_\X(L,\Pp,\overline{z}')}$$ 
do not depend on the orientation of the components of $L$.
The conclusion now follows from the obvious equality
$$  \Phi_\X(L,\Pp_M)(t):= \sum_{[\overline{z}]} 
\Phi_\X(L,\Pp_M,[\overline{z}])(t)\ . $$
\end{proof}

\subsection{Quandle invariants and tunnel number}\label{tunnel:sub}
For every finite quandle $\X$, the number $c_\X(L,\Pp_m,\overline{1})$
(that is the number of colorings associated to the cycle assigning the
value $1$ to every component of $L$) is in a sense the most widely
considered quandle coloring invariant of classical links.  The {\it
  multiset} of invariants obtained by varying the $\Z_{t_\X}$-cycles
(when $\X \in \Qq_\Ff$, such a multiset is encoded by the polynomial
$\Phi_\X(L,\Pp_M)(t)$) has been introduced in \cite{ishii1, ishii2} in
order to extend quandle coloring invariants to {\it spatial
  graphs} and even to {\it spatial handlebodies}.  It turns out that
this approach is useful also in the case of links. An interesting
application of these extended invariants is given in~\cite[Proposition
6]{ishii1}, where only quandles of type 2 are considered.  However,
Ishii's argument applies verbatim to our (more general) case, thus
giving the following:

\begin{prop}\label{ishii} 
For every $\X\in \Qq_\Ff$ and every (unoriented) link $L$ we have
$$t(L) \geq \Aa_\Qq (L,\Pp_M) \ . $$
\end{prop}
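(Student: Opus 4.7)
The plan is to adapt the proof of \cite[Proposition~6]{ishii1}, originally stated for involutory quandles, to the full family $\Qq_\Ff$. By the definition of the tunnel number I fix $n=t(L)$ disjoint properly embedded arcs $\tau_1,\ldots,\tau_n$ in $S^3\setminus L$ such that the spatial graph $G:=L\cup\tau_1\cup\cdots\cup\tau_n$ has regular neighborhood equal to an unknotted handlebody of $S^3$; by definition this means that $G$ is ambient isotopic to a spatial graph contained in a $2$--sphere $S^2\subseteq S^3$, and in particular $G$ admits a diagram $D_G$ with no crossings at all.

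The first step is to extend the notion of $\X$--coloring from link diagrams to diagrams of the trivalent spatial graph $G$, following \cite{ishii1,ishii2}. The crossing rule of Figure~\ref{quandle2} is supplemented by a trivalent--vertex rule at each endpoint of the tunnels $\tau_i$; since $\X\in\Qq_\Ff$ is an Alexander quandle, this rule is again $\F_q$--linear in the colors of the three incident arcs (with coefficients that depend on the label $\overline{z}$ of the overcrossing sublink). Regarding each tunnel as its own sublink and extending $\overline{z}$ to a cycle $\overline{z}'$ on $G$ by assigning an arbitrary label to every $\tau_i$, one verifies exactly as in \cite{ishii1,ishii2} that every $\X$--coloring of $(L,\Pp_M,\overline{z})$ is the restriction of some $\X$--coloring of $(G,\Pp',\overline{z}')$; hence
\[
d_\X(L,\Pp_M,\overline{z})\;\le\; d_\X(G,\Pp',\overline{z}').
\]

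The second step is to bound $d_\X(G,\Pp',\overline{z}')$ exploiting the unknottedness of $G$. Since $D_G$ has no crossings, the linear system defining an $\X$--coloring reduces to the trivalent--vertex relations alone; a direct combinatorial count along the lines of \cite[Proposition~6]{ishii1}---which is purely $\F_q$--linear, and therefore insensitive to the specific Alexander quandle used---shows that the dimension of the space of colorings is at most $n+1$, the genus of the unknotted handlebody neighborhood of $G$. Combining the two estimates,
\[
a_\X(L,\Pp_M,\overline{z})\;=\;d_\X(L,\Pp_M,\overline{z})-1\;\le\; n\;=\;t(L),
\]
and taking the supremum over $\X\in\Qq_\Ff$ and over all $\Pp_M$--cycles $\overline{z}$ of $L$ yields $\Aa_\Qq(L,\Pp_M)\le t(L)$.

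The only genuinely new point beyond Ishii's argument is checking that the trivalent--vertex rule used in \cite{ishii1,ishii2} remains an $\F_q$--linear relation for every $\X\in\Qq_\Ff$ and every value of the extended cycle $\overline{z}'$ on the overcrossing strand; this is the step I expect to be the main technical obstacle. Once it is established, the rest of the proof is formal, since the dimension estimate in the second step is already phrased in terms of a linear system over $\F_q$ whose matrix depends only on the combinatorics of $D_G$ and on the coefficients appearing in the crossing and vertex rules.
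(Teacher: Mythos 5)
The core of your argument is the claim, in your first step, that every $\X$--coloring of $(L,\Pp_M,\overline{z})$ is the restriction of some $\X$--coloring of $(G,\Pp',\overline{z}')$, which would give $d_\X(L,\Pp_M,\overline{z})\le d_\X(G,\Pp',\overline{z}')$. This inequality goes in the wrong direction and is false in general. The vertex condition of Figure~\ref{quandle3} forces the color of a tunnel arc to agree with the colors of the incident arcs of $L$ at \emph{both} of its endpoints, while that color is also propagated through every undercrossing the tunnel traverses; hence each tunnel imposes one extra linear ``matching'' condition on the colorings of $L$, which a given coloring need not satisfy. Concretely, take $L$ the trefoil, $\X=\Dd_3$, $\overline z=1$, $n=t(L)=1$: then $d_\X(L,\overline z)=2$ (nine Fox $3$--colorings), whereas the coloring count of $G$ is an invariant of its regular neighbourhood, an unknotted genus--$2$ handlebody, so it can be computed on a crossingless connected diagram, where the vertex condition forces all arcs to carry a single color and hence $d_\X(G,\overline z')=1<2$. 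This example also shows that your second step is off: for a crossingless connected trivalent diagram the coloring space has dimension exactly $1$, not $n+1$. The two errors happen to compensate numerically, but neither intermediate statement is true. A further imprecision: the tunnels must be labelled $0$, not arbitrarily, so that they act trivially as overcrossing strands and do not perturb the colors of the arcs of $L$.

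The correct statement --- the one the paper's proof is built on --- is that the restriction map from colorings of $G$ (with the cycle extended by $0$ on the tunnels) to colorings of $L$ is injective with image of codimension at most $n$; equivalently, attaching one arc can only decrease the top coloring dimension, and by at most $1$. The paper runs this as an induction $L=G_0\subset G_1\subset\dots\subset G_m$, $m=t(L)$, proving $\deg\Phi_\X(G_j)(t)\ge \deg\Phi_\X(G_{j-1})(t)-1$ at each step and concluding with $\deg\Phi_\X(G_m)(t)=0$ for the spine of the unknotted handlebody, which yields $a_\X(L,\Pp_M,\overline z)\le t(L)$. Your ``all tunnels at once'' formulation can be repaired along the same lines, but only after reversing the inequality of your first step (gaining the additive error $n$) and replacing the bound $n+1$ of your second step by the exact value $1$.
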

\Dim We sketch the proof for the sake of completeness. 
By equality~\eqref{tunnel1}, it is sufficient to show that
$t(L)\geq \deg \Phi_\X (L,\Pp_M)(t)$ for every quandle
$\X\in\Qq_\Ff$.
Set $m=t(L)$.
Then there is a sequence $L:=G_0 \subset G_1 \subset G_2 \subset \dots
\subset G_m$, where $G_j$ is a spatial graph with trivalent vertices
obtained by attaching an arc to $G_{j-1}$, and $G_m$ is a spine of an
unknotted handlebody. According to~\cite{ishii1}, for every quandle
$\X$ the $\X$-colorings of any diagram of a trivalent graph like $G_j$
verify (in addition to the rule already described in Figure
\ref{quandle2}) the further vertex condition described on the left of
Figure \ref{quandle3} (here $a$ refers to a color). With such a
definition of coloring, the number of $\X$--colorings of the diagram
of a spatial graph does depend only on the isotopy class of a regular
neighbourhood of the graph, which is a spatial handlebody (the proof
of Theorem~5 in~\cite{ishii1} does not really makes use of condition
(K2') stated there, that is equivalent to asking that the considered
quandle has type 2).


\begin{figure}[htbp]
\begin{center}
 \includegraphics[height=5cm]{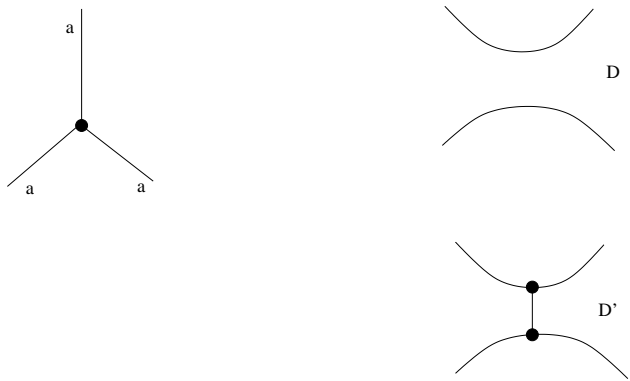}
\caption{\label{quandle3} Quandle colorings at vertices of trivalent graphs.}
\end{center}
\end{figure} 

We can assume that $G_{j-1}$ and $G_j$ admit respectively diagrams $D$
and $D'$ that differ from each other only by the local configurations
shown on the right of Figure \ref{quandle3}. Every cycle on $G_{j-1}$
extends to a cycle on $G_j$ that assigns the value $0$ to the added
arc. Then it is easy to show that
$$ \deg \Phi_\X(G_j)(t) \geq  \deg \Phi_\X(G_{j-1})(t) -1\ .$$ 
Moreover, since a regular neighbourhood of $G_m$ is an unknotted
handlebody, we have
$$ \deg \Phi_\X(G_m)(t)=0\ ,$$
hence 
$$0\geq  \deg \Phi_\X(L)(t) -t(L) \ . $$
\cvd

Proposition \ref{tunnel} is now an easy consequence of
Proposition~\ref{ishii}.  In \cite{RR} we have used 
Ishii's quandle coloring invariants of graphs
(only exploiting the dihedral case)
in order to detect different
{\it level of knottings} of spatial handlebodies.

\section{Ribbon tangles}\label{ribbon}
Let us now fix a quandle $\X \in \Qq_\Ff$ of type $t_\X$.
The following simple Lemma (it is a straightforward computation) 
plays a crucial r\^ole in the proof of our main results. Consider the local
configurations of Figure \ref{doppio1}. Here $a,b,c,b_1,b_2$ are
colors belonging to some $\X$-coloring, where we understand that $z\in
\{0,\dots, t_\X-1\}$ is the same value of the cycle on {\it both}
the overcrossing strands.

\begin{figure}[htbp]
\begin{center}
 \includegraphics[height=4cm]{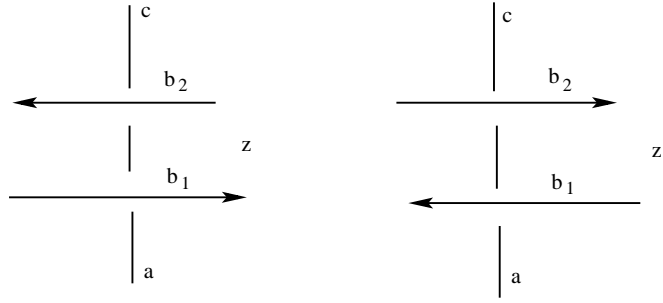}
\caption{\label{doppio1} 
The quandle colorings described in Lemma~\ref{const-diff}.}
\end{center}
\end{figure}  

\begin{lem}\label{const-diff} For the diagram on the left of Figure
  \ref{doppio1} we have:
$$c= a+ \overline{t}^{-z}H_z(\overline{t})(b_1-b_2) \ .$$
For the diagram on the right we have:
$$c= a+H_z(\overline{t})(b_2-b_1)\ . $$
\end{lem}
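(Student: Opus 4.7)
The plan is to verify both formulas by direct bookkeeping of quandle colors as the under-strand traverses the two crossings, using the closed form $a\ast^z b=\bar{t}^z a + H_z(\bar{t})b$ recalled earlier in the paper. The only subtle point is the rule at a negative crossing, so I would first isolate that as an auxiliary observation, and then just apply the rules twice to each diagram in Figure \ref{doppio1}.

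First, the sign computation. At a positive under-crossing the color changes by $S_b^z$, that is $c=a\ast^z b=\bar{t}^z a + H_z(\bar{t})b$. At a negative under-crossing the map applied is $S_b^{-z}$, i.e.\ $c=a\ast^{-z} b=\bar{t}^{-z} a + H_{-z}(\bar{t})b$. The key algebraic identity I will use repeatedly is
\[
H_{-z}(\bar{t})=1-\bar{t}^{-z}=-\bar{t}^{-z}(1-\bar{t}^{z})=-\bar{t}^{-z}H_z(\bar{t}),
\]
which ensures that the contribution of the overcrossing color at a negative crossing is the negative of the positive-crossing contribution, up to a $\bar{t}^{-z}$ twist.

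Now the two configurations. The left diagram of Figure \ref{doppio1} is the ``ribbon'' pattern in which the under-strand with entering color $a$ first passes positively under the strand colored $b_1$ and then negatively under the strand colored $b_2$ (both labelled $z$). The intermediate color is $c_1=\bar{t}^z a+H_z(\bar{t})b_1$, and applying the negative rule with the identity above gives
\[
c=\bar{t}^{-z}c_1+H_{-z}(\bar{t})b_2=a+\bar{t}^{-z}H_z(\bar{t})b_1-\bar{t}^{-z}H_z(\bar{t})b_2=a+\bar{t}^{-z}H_z(\bar{t})(b_1-b_2),
\]
which is the asserted formula. The right diagram of Figure \ref{doppio1} is the opposite pattern (negative crossing under $b_1$ first, positive crossing under $b_2$ second). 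The intermediate color is $c_1=\bar{t}^{-z}a-\bar{t}^{-z}H_z(\bar{t})b_1$, and then
\[
c=\bar{t}^{z}c_1+H_z(\bar{t})b_2=a-H_z(\bar{t})b_1+H_z(\bar{t})b_2=a+H_z(\bar{t})(b_2-b_1),
\]
as required.

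There is essentially no obstacle here beyond correctly reading the signs of the two crossings from the figure; the algebra is a two-line linear computation in $\F(p,h(t))$, and the cancellation of the $\bar{t}^{\pm z}$ factors on $a$ is exactly what makes the two-crossing ``ribbon'' local model behave as in the statement. This pattern is precisely what will later let us absorb pairs of ribbon crossings without changing the underlying color of the strand, which is why the lemma is singled out as the key local identity for the main results.
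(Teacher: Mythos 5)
Your computation is correct and is precisely the ``straightforward computation'' that the paper asserts but omits: the inverse rule $a\ast^{-z}b=\overline{t}^{-z}a+H_{-z}(\overline{t})b$ together with the identity $H_{-z}(\overline{t})=-\overline{t}^{-z}H_z(\overline{t})$ is exactly what is needed, and your two two-line compositions reproduce both stated formulas. Nothing further is required.
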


Let us consider a decorated tangle diagram $T$ as
suggested in Figure \ref{ribbontangle1}. 

It is understood that the circular box contains $h$ oriented strings,
each of which has an ``input'' and an ``output'' endpoint.
Moreover, the $j$--th string is decorated with a sign $s_j\in
\{\pm 1\}$, and its endpoints 
are endowed 
with an input color $r_j$ and
an output color $f_j$.

\begin{figure}[htbp]
\begin{center}
 \includegraphics[height=7cm]{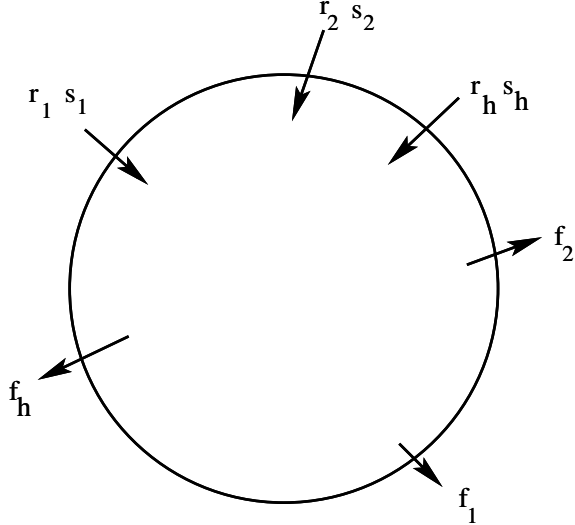}
\caption{\label{ribbontangle1} A string tangle $T$.}
\end{center}
\end{figure}  

We use such a string tangle to encode an associated {\it ribbon
  tangle} $R(T)$ with oriented {\it ribbon boundary tangle} $D(T)$, by
applying the doubling rules suggested in Figure \ref{ribbontangle2},
where the left (right) side refers to the string sign $s = 1$
($s=-1$). Every ribbon component has two oriented boundary components,
that are two copies of the corresponding string of $T$ with opposite
orientations.  These boundary components are also ordered by taking
first the component which shares the same orientation as the
corresponding string of $T$.

\begin{figure}[htbp]
\begin{center}
 \includegraphics[height=4cm]{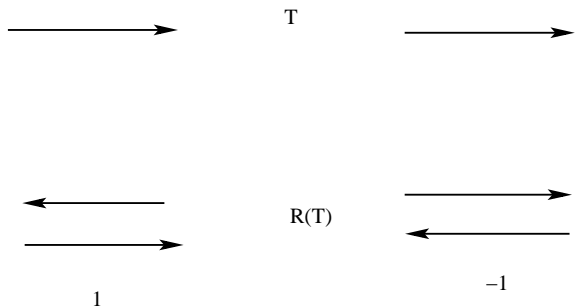}
\caption{\label{ribbontangle2} From a string tangle to a ribbon tangle.}
\end{center}
\end{figure}  

If $\overline{z}\colon \{1,\dots , h\}\to \Z_{t_\X}$ is any cycle defined
on the strings of $T$, we define the associated {\it ribbon boundary
  cycle} $\hat{z}$ on $D(T)$ by assigning the same value
$\overline{z}(j)$ to both boundary components of the ribbon associated
to the $j$--th string of $T$. In this way we have obtained a {\it
  $\Z_{t_\X}$-labelled ribbon boundary tangle} $(D(T),\hat{z})$. Arcs of
$T$ and of $D(T)$ are defined as usual, provided now that also the
endpoints of the strings of $T$ and $D(T)$ have to be considered as
endpoints of arcs of $T$ and $D(T)$.  \smallskip

The notion of $\X$-coloring extends obviously to any $\Z_{t_\X}$--labelled
ribbon boundary tangle $(D(T),\hat{z})$.  For every such a coloring,
along every arc of $T$ we see a couple of ordered arcs of $D(T)$
carrying an ordered couple of colors, say $(a,b)$.  The following
result is an immediate consequence of Lemma \ref{const-diff}.

\begin{lem}\label{const-diff2} For every $\X$-coloring of $(D(T),\hat{z})$,
  the {\rm color difference} $d= b-a$ is constant along every
  string of $T$.
\end{lem}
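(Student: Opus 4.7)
The plan is to show that the color difference $d=b-a$ is preserved each time the string under consideration passes through an undercrossing of $T$. Combined with the observation that the colors on the two parallel boundary arcs of the ribbon do not change between consecutive undercrossings -- since overcrossings in $T$ unfold into four crossings in $D(T)$ in which the ribbon of $s_j$ is the overcrossing one, hence its colors are not altered -- this reduces the problem to a single-crossing check.

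So fix an undercrossing in $T$ where the string $s_j$ passes under a string $s_k$ carrying cycle value $z$. Doubling this crossing produces four crossings in $D(T)$, in which the two parallel boundary arcs $\alpha_1,\alpha_2$ of $R_j$ each pass under both parallel boundary arcs $\beta_1,\beta_2$ of $R_k$. Let $b_1,b_2$ denote the colors on $\beta_1,\beta_2$, and let $(a,b)$, respectively $(a',b')$, be the ordered pair of incoming, respectively outgoing, colors on $\alpha_1,\alpha_2$. Applying Lemma~\ref{const-diff} to $\alpha_1$, one obtains
$$a' = a + \Phi,$$
where, by inspection of the two formulas in that lemma, $\Phi$ depends only on $z$ and on the difference $b_1-b_2$ (equivalently $b_2-b_1$), but \emph{not} on the incoming color of the undercrossing arc. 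The crucial observation is that $\alpha_2$ crosses under the very same pair $\beta_1,\beta_2$ in the very same local configuration as $\alpha_1$, since the two parallel arcs of $R_j$ are produced from the single undercrossing segment of $s_j$ by the doubling rules of Figure~\ref{ribbontangle2}. Hence Lemma~\ref{const-diff} applies to $\alpha_2$ with the identical formula, giving $b' = b + \Phi$ with the same $\Phi$.

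Subtracting the two equations yields $b'-a'=b-a$, which is precisely the desired invariance of $d$ across a single undercrossing of $T$. Iterating this along the string of $T$ completes the proof. The only mildly delicate point, which is not automatic from Lemma~\ref{const-diff} but has to be read off from Figure~\ref{ribbontangle2}, is the matching of local configurations for $\alpha_1$ and $\alpha_2$: one must check that both parallel undercrossing arcs, together with the overcrossing pair $\beta_1,\beta_2$, fall into the same diagram of Figure~\ref{doppio1} (and not into two different ones), so that the formula applies to both with the same choice of roles for $b_1$ and $b_2$. Once this is established, the lemma is just subtraction, in agreement with the claim that it is an immediate consequence of Lemma~\ref{const-diff}.
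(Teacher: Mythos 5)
Your proposal is correct and follows exactly the route the paper intends: the paper gives no written proof beyond declaring the lemma ``an immediate consequence of Lemma~\ref{const-diff}'', and your argument is precisely the unpacking of that remark --- both parallel boundary arcs of the under--ribbon receive the same additive shift $\Phi$, which by Lemma~\ref{const-diff} depends only on $z$ and $b_1-b_2$ and not on the incoming color, so the difference is preserved. Your extra care about the two arcs falling into the same local configuration of Figure~\ref{doppio1} is a legitimate (and correctly resolved) point that the paper leaves implicit.
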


Then every such $\X$-coloring can be described as follows.
At the input point of the $j$--th string of $T$ we have an ordered couple of
colors $(a_j,\ a_j+d_j)$. Along every arc $\alpha$ of $T$ belonging to 
the $j$--th string, we have a couple of colors of the form 
$(a_j+r_\alpha,\  a_j + d_j + r_\alpha)$.

\medskip

For obvious reasons, we say that the $r_\alpha$'s define an $\X_{\rm
  diff}$-{\it coloring} of the arcs of $T$, which vanishes at the
input points of the strings of $T$ (observe that the definition of
difference between colors relies on the fact that $X$ is a module, and
is not related to the quandle operation of $\X$).  We now deduce from
Lemma~\ref{const-diff} the rule governing these $\X_{\rm
  diff}$-colorings at crossings.  We refer to Figure
\ref{diff-rule3}. Here $r_I,r_F$ are $\X_{\rm diff}$-colors, $d$, $z$,
$s$ are respectively the constant $\X$-color difference, the value of
the cycle and the sign of the string that contains the overcrossing
strand, and $\epsilon=\pm 1$ is the usual sign of the crossing.

\begin{figure}[htbp]
\begin{center}
\includegraphics[height=3cm]{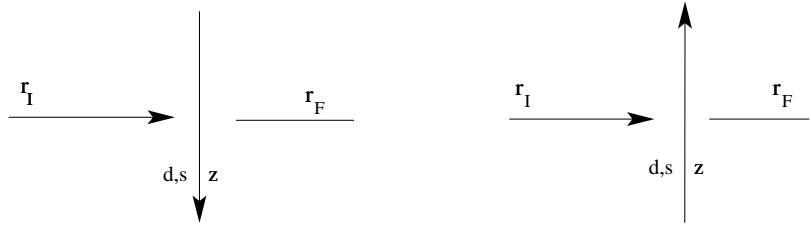}
 \caption{\label{diff-rule3} The behaviour of $\X_{\rm diff}$ at crossings. On the top,
   a positive crossing.  On the bottom, a negative crossing.}
\end{center}
\end{figure}  
\smallskip

With notation as in Figure~\ref{diff-rule3}, 
Lemma~\ref{const-diff} readily implies that
\begin{equation}\label{diff-eq}
r_F = r_I - \epsilon H_z(\overline{t})\overline{t}^{\sigma(s)z}d \ . 
\end{equation}

As a consequence, every $\X_{\rm diff}$-coloring of $(T,\overline{z})$
(in particular the corresponding set of output colors $\{f_j\}$) is
completely determined by the input data $\{d_j\}$, and every
$\X$-coloring of $(D(T),\hat{z})$ is completely determined by the
input data $\{(a_j,d_j)\}$.
In fact, every $\X_{\rm diff}$-coloring of $(T,\overline{z})$
can be constructed as follows: we run along every string of $T$ from
its input point to its output point, and at every undercrossing we add
to the local input value $r_I$ a suitable term according to
equation~\eqref{diff-eq}. 


Given an ordered couple $(i,j)$ of string indices, let $n_{i,j}^+$
(resp.~$n_{i,j}^-$) be the number of times the $i$--th string passes under
the $j$--th string at a positive (resp.~negative) crossing, and let us set
$$M_{i,j}= n_{i,j}^+ - n_{i,j}^- \ . $$
The following Proposition summarizes the discussion carried out in
this Section.

\begin{prop}\label{diff-out} 
  Let $T$ be a decorated tangle with associated ribbon boundary tangle
  $D(T)$.  Then, every $\X_{\rm diff}$-coloring of $(T,\overline{z})$
  (in particular the corresponding set of output colors $\{f_j\}$) is
  completely determined by the input data $\{d_j\}$, and every
  $\X$-coloring of $(D(T),\hat{z})$ is completely determined by the
  input data $\{(a_j,d_j)\}$. In particular, the $f_j$'s can be
  computed in terms of the $d_i$'s by means of the formula
$$f_i = 
-\sum_{j=1}^h
M_{i,j}\overline{t}^{\sigma(s(j))\overline{z}(j)}H_{\overline{z}(j)}(\overline{t})d_j
\ . $$
\end{prop}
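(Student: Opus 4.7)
The plan is to formalize the accumulation argument already anticipated in the paragraphs preceding the statement. For the determination claims, I would proceed as follows. By Lemma~\ref{const-diff2}, the color differences $d_j$ are constant along each string of $T$, so they are unambiguously attached to the strings. The $\X_{\rm diff}$-coloring is normalized to vanish at the input arc of every string, and by equation~\eqref{diff-eq} the value $r_F$ on the outgoing arc of any undercrossing is an explicit affine function of the incoming value $r_I$ and of the data $(s(j),\overline{z}(j),d_j)$ of the overcrossing string. Since each string of $T$ is an oriented interval traversed from input to output, a straightforward induction along the string produces a unique $r_\alpha$ on every arc, so the whole $\X_{\rm diff}$-coloring is determined by $\{d_j\}$. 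Prescribing additionally the input colors $a_j$ then pins down the associated $\X$-coloring of $(D(T),\hat z)$ arc by arc via the assignment $(a_j+r_\alpha,\, a_j+d_j+r_\alpha)$.

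To derive the closed formula, I would run along the $i$-th string from its input to its output and record the change in $r$ at each undercrossing. Starting from $r=0$, equation~\eqref{diff-eq} contributes
\[
-\,\epsilon\, H_{\overline z(j)}(\overline t)\,\overline t^{\,\sigma(s(j))\overline z(j)}\, d_j
\]
at every crossing where string $i$ passes under string $j$ with sign $\epsilon$. This is a telescoping sum, and summing all such contributions and grouping them by the index $j$ of the overcrossing string yields
\[
f_i \;=\; -\sum_{j=1}^{h}\Bigl(\sum \epsilon\Bigr)\, \overline t^{\,\sigma(s(j))\overline z(j)}\,H_{\overline z(j)}(\overline t)\, d_j,
\]
where the inner sum ranges over all crossings at which string $i$ undercrosses string $j$. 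By the very definition $M_{i,j}=n^+_{i,j}-n^-_{i,j}$, the inner sum equals $M_{i,j}$, producing exactly the displayed expression.

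No genuine obstacle is expected: the argument is essentially bookkeeping once equation~\eqref{diff-eq} is in force. The only delicate point is that the scalar weight $\overline t^{\,\sigma(s(j))\overline z(j)} H_{\overline z(j)}(\overline t)$ attached to an undercrossing must depend only on the overcrossing string and not on the specific crossing where it is applied; but this is precisely guaranteed by Lemma~\ref{const-diff2}, which asserts that $d_j$ together with the labels $(s(j),\overline z(j))$ are constants attached to the $j$-th string. Everything else reduces to the signed count of undercrossings of string $i$ below string $j$, which is exactly $M_{i,j}$.
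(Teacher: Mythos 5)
Your argument is correct and is essentially the paper's own: the Proposition is explicitly presented there as a summary of the preceding discussion, namely running along each string, accumulating the contributions of equation~\eqref{diff-eq} at the undercrossings, and grouping them by the index of the overcrossing string so that the signed count becomes $M_{i,j}$. The only cosmetic inaccuracy is attributing the constancy of $s(j)$ and $\overline z(j)$ to Lemma~\ref{const-diff2}, which only concerns the color difference $d_j$; the sign and the cycle value are constant on each string by definition.
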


\section{Seifert surfaces and special diagrams}\label{specialdiag}
Let us consider a compact oriented surface $\Sigma_{g,s}$ of genus $g$
having $s\geq 1$ boundary components. Clearly $g+s\geq 1$, and $g+s=1$
if and only if $g=0$ and $s=1$, \emph{i.e.}~if $\Sigma_{g,s}$ is a disk. 
Let us assume that $g+s>1$.  It is well
known that $\Sigma_{g,s}$ is homeomorphic to the model shown in
Figure~\ref{surface}, where the case $g=2$, $s=3$ is considered. The
picture stresses also the fact that $\Sigma_{g,s}$ is the regular
neighbourhood of a 1--dimensional trivalent graph $P_{g,s}$, which is
therefore a \emph{spine} of $\Sigma_{g,s}$.

\begin{figure}[htbp]
\begin{center}
 \includegraphics[height=5.5cm]{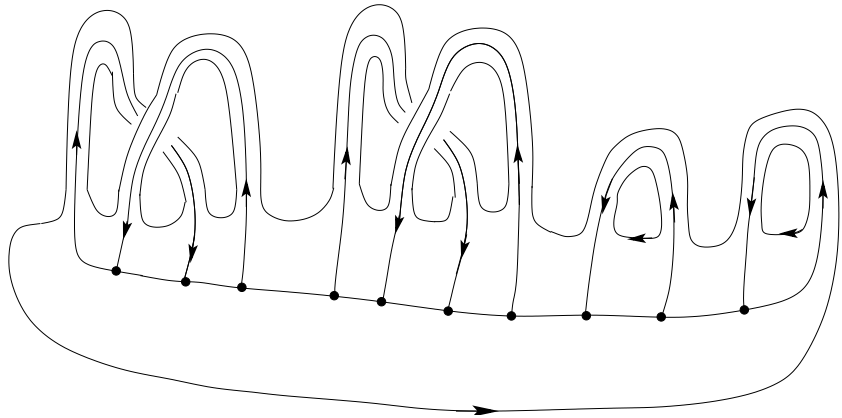}
\caption{\label{surface} The surface $\Sigma_{2,3}$ and the spine $P_{2,3}$.}
\end{center}
\end{figure}  

Let now $L$ be an oriented link endowed with a Seifert surface $\Sigma$ of
genus $g$, and let $s$ be the number of components of $L$. Assume
first that $g+s>1$ . Then the pair $(\Sigma,L)$ is the image of a suitable
embedding of the corresponding model $(\Sigma_{g,s},\partial \Sigma_{g,s})$ in
$S^3$.  As a consequence, $L$ admits a {\it special diagram} $\Dd(T)$
as described in Figure~\ref{surface4:fig}: on the top there is a
suitable decorated tangle $T$ with $2g+s-1$ strings (see
Section~\ref{ribbon}), where we understand that all the strings have
positive sign; on the bottom we see a standard {\it closing tangle}
$C$ which closes the ribbon boundary tangle $D(T)$ associated to $T$.
The strings of $T$ correspond to a generic projection of the image
(via the embedding $\Sigma_{g,s}\hookrightarrow S^3$) of some oriented
edges of the spine $P_{g,s}$ of $\Sigma_{g,s}$. We say that $T$ is the
\emph{primary tangle} of the special diagram $\Dd(T)$.  If $g+s=1$,
then $L$ is a trivial knot and $\Sigma$ is a spanning disk of $L$; in
this case we understand that the only special diagram of $L$ is given
by the trivial diagram $D$ of $L$, and we agree that the closing
tangle $C$ coincides with $D$, while the primary tangle $T$ is empty.

\begin{figure}[htbp]
\begin{center}
\input{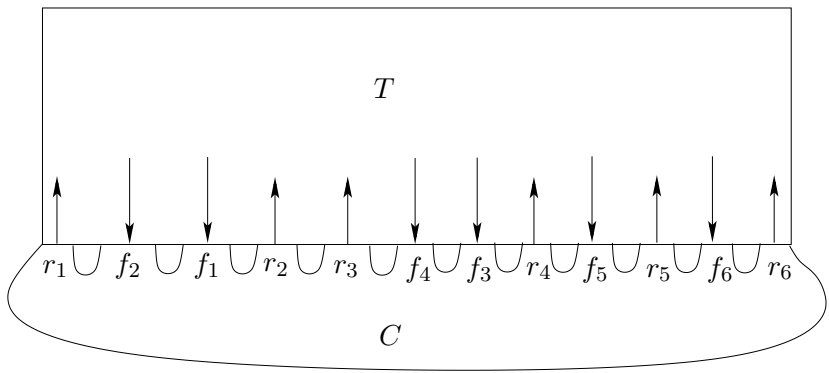} 
\caption{\label{surface4:fig} A special diagram of a $3$--component
  link endowed with a Seifert surface of genus $2$.}
\end{center}
\end{figure}  

\medskip

Let us now consider an oriented link $L$ endowed with a boundary
partition $\Pp=(L_1,\ldots,L_h)$, and let $\Sigma_1,\ldots,\Sigma_h$ be a system
of disjoint Seifert surfaces such that $\partial \Sigma_i=L_i$ 
(as \emph{oriented} $1$--manifolds). If $g_i$ and $s_i$ are the
genus and the number of boundary components of $\Sigma_i$, then the pair
$(\Sigma_1\cup\ldots\cup \Sigma_h,L)$ is the image of a suitable embedding in
$S^3$ of the disjoint union $\bigsqcup_{i=1}^h (\Sigma_{g_i,s_i},\partial
\Sigma_{g_i,s_i})$.  It readily follows that $L$ admits a special diagram
as described in Figure~\ref{surface5:fig}, where the closing tangle
$C$ decomposes into the union of $h$ closing tangles
$C_1,\ldots,C_h$. Of course, strings of $T$ corresponding to distinct
$\Sigma_i$'s may be linked to each other.

Such a special diagram is \emph{adapted} to $\Pp$, in the 
sense that every arc of the primary tangle $T$ gives rise to a pair
of arcs of $D(T)$ that belong to the same link of the partition $\Pp$.
Therefore, every $\Pp$--cycle on $L$ descends to a well--defined cycle
on $T$. 

\begin{figure}[htbp]
\begin{center}
\input{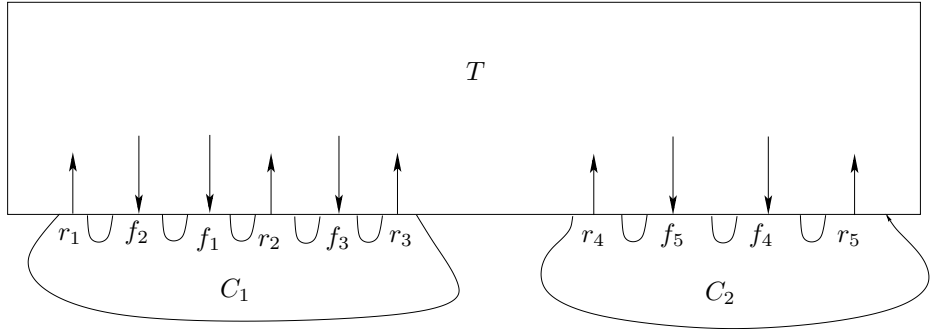}
\caption{\label{surface5:fig} A special diagram for the link
  $L=L_1\cup L_2$, where $L_1$ is a $2$--component link bounding a
  Seifert surface of genus 1, and $L_2$ is a knot bounding a Seifert
  surface of genus 1.}
\end{center}
\end{figure}  

\begin{remark}\label{diagram:rem}
  {\rm Suppose that $\Pp$ is a boundary partition of a $k$--component
    link $L$. The procedure described in this Section provides a
    special diagram of $L$ adapted to $\Pp$ whose primary tangle has
    exactly $2g(\Pp)+k-|\Pp|$ strings.  }
\end{remark}

\section{Lower bounds for link genera}\label{main-dim}
We are now ready to give a
\smallskip

{\it Proof of Theorem~\ref{generalbound}.}  Let $(L,\Pp)$ be a
$k$--component partitioned link, and let us set
$$
\alpha= 2g(\Pp)+k-|\Pp|\ .
$$
As pointed out in Remark~\ref{diagram:rem}, $L$ admits a special diagram 
$\Dd(T)$ adapted to $\Pp$ whose primary tangle $T$
has exactly $\alpha$ strings.

Let us take a quandle $\X \in \Qq_\Ff$ and 
a $\Pp$--cycle $\overline{z}\colon \Pp\to \Z_{t_\X}$.
Such a cycle descends to the diagram $\Dd(T)$, whence to the
boundary ribbon tangle $D(T)\subset \Dd(T)$. What is more, 
since $\Dd(T)$ is adapted to $\Pp$, 
the cycle $\overline{z}$ 
induces a cycle on $T$, which will also be denoted by $\overline{z}$. 
The $\X$-colorings of $(\Dd(T),\overline{z})$ are the
$\X$-colorings of $(D(T),\hat{z})$ that extend to the whole
$(\Dd(T),\overline{z})$. 

In order to study 
the space of $\X$-colorings of $(D(T),\overline{z})$ we exploit
the results obtained in Section~\ref{ribbon}. The space of $\X$-colorings
of $(\Dd(T),\overline{z})$ is then obtained by imposing 
the conditions corresponding to the fact that colors have to match along
the closing tangle $C$ of $\Dd(T)$.

Let us associate to every string of $T$ four variables
$(a_i,d_i,b_i,f_i)$, $i=1,\ldots,\alpha$. As usual, the pair
$(a_i,a_i+d_i)$ refers to the values of a $\X$-coloring
on the arcs of $D(T)$ originating at the input point of the 
$i$-th string of $T$, while $(a_i+f_i,a_i+d_i+f_i)$ refers to the values
of such a coloring on the arcs of $D(T)$ ending at the output point.
Finally, the auxiliary variable $b_i$ encodes
the change that
an arc of $D(T)$ undergoes whenever it undercrosses the band corresponding to the $i$--th string.
Therefore, the value of $b_i$ depends both on $d_i$ and 
on the value assigned by $\overline{z}$ to the $i$-th string of $T$.
Henceforth, we denote such a value by $z_i$ 
(so $z_i=\overline{z}(j(i))$
when the $i$-th string of $T$ corresponds to a band of $D(T)$ 
whose boundary lies on $L_{j(i)}$). 

\smallskip

Let us write down the system that computes the space of colorings we
are interested in.  Proposition~\ref{diff-out} implies that the space of
$\X$-colorings of $(D(T),\overline{z})$ is identified with the space
of the solutions of the linear system
\begin{equation}\label{system1}
b_i =  - \overline{t}^{-z_i}H_{z_i}(\overline{t})d_i,\qquad  i=1,\ldots, \alpha
\end{equation}
\begin{equation}\label{system2}
f_i = \sum_{j=1}^{\alpha} M_{i,j}b_j, \qquad   i=1,\ldots,\alpha\ .
\end{equation}
In order to obtain the space of $\X$-colorings of
$(\Dd(L),\overline{z})$, we have to add to these equations also the
conditions arising from the fact that colors must match along the
strings of the closing tangle $C$. These conditions can be translated
into a linear system
\begin{equation}\label{system3}
S(\{a_i\},\{d_i\},\{f_i\})=0 \ ,
\end{equation}
and we stress  that such a system does not involve the $b_i$'s
(this system 
is written down in Subsection~\ref{diag:det}, 
but this is not relevant to our purposes here).

Let now $\overline{z}'$ be another $\Pp$--cycle, and let us 
concentrate on the difference
$$ |a_\X(L,\Pp,\overline{z}) - a_\X(L,\Pp,\overline{z}')|\ .$$
We have just seen that the linear system that computes the space
$c_\X(L,\Pp,\overline{z})$ is given by the union of
equations~\eqref{system1}, \eqref{system2}, \eqref{system3}.  Now, the
argument above shows that the system computing the space
$c_\X(L,\Pp,\overline{z}')$ is given by the union of the
systems~\eqref{system2} and \eqref{system3} with the following
equations:
\begin{equation}\label{system4}
b_i =  
- \overline{t}^{-z'_i}H_{z'_i}(\overline{t})d_i,\qquad  i=1,\ldots, \alpha\ ,
\end{equation}
where $z'_i$ is the value assigned by $\overline{z}$ to the $i$-th
string of $T$.  Therefore, the system computing
$c_\X(L,\Pp,\overline{z}')$ is obtained from the system computing
$c_\X(L,\Pp,\overline{z}')$ just by replacing
equations~\eqref{system1} with equations~\eqref{system4}.  Since such
equations are in number of $\alpha=2g(\Pp)+k-|\Pp|$ we finally obtain
$$
|a_\X(L,\Pp,\overline{z}) - a_\X(L,\Pp,\overline{z}')|\leq 2g(\Pp)+k-|\Pp|\ .
$$ 
This concludes the proof of Theorem~\ref{generalbound}.

\cvd

Finally we note that the very same argument of the above proof gives
the following improvement of Theorem~\ref{generalbound} .

\begin{teo}\label{generalbound2} 
  Let $\Pp=(L_1,\ldots,L_h)$ be a boundary partition of $L$, where
  $L_i$ is a $k_i$--component link, let $\overline{z}$ and
  $\overline{z}'$ be two $\Pp$--cycles on $L$, and let
  $I=\{i\in\{1,\ldots,h\}\, |\, \overline{z}(i)\neq
  \overline{z}'(i)\}$.  Let also $(\Sigma_1,\ldots,\Sigma_h)$ be a system of
  disjoint Seifert surfaces for the $L_i$'s. Then the following
  inequality holds:
$$
|a_\X (L,\Pp,\overline{z})-a_\X (L,\Pp,\overline{z}')| \leq 
2\sum_{i\in I} g(\Sigma_i) +\sum_{i \in I} k_i - |I|\ .
$$
\end{teo}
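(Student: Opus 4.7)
The proof proposal follows the same strategy as that of Theorem~\ref{generalbound}, tracked carefully enough to see that only certain equations in the defining linear system depend on the cycle, and that changing the cycle only on the indices in $I$ affects only a prescribed number of them.

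First, I would invoke the construction of Section~\ref{specialdiag} applied to the disjoint system of Seifert surfaces $(\Sigma_1,\ldots,\Sigma_h)$ realizing the given genera. This produces a special diagram $\Dd(T)$ of $L$ adapted to $\Pp$ whose primary tangle $T$ decomposes, string by string, according to the partition: the strings of $T$ coming from (the spine of) $\Sigma_i$ are in number $2g(\Sigma_i)+k_i-1$, and no string belongs simultaneously to two different $\Sigma_i$'s. Denote this set of strings by $T_i$, so that $T=\bigsqcup_{i=1}^h T_i$ and $|T_i|=2g(\Sigma_i)+k_i-1$. Every $\Pp$--cycle descends to $T$ in such a way that the value on every string of $T_i$ equals $\overline{z}(i)$.

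Next, I would reproduce the linear system of the proof of Theorem~\ref{generalbound}. The space of $\X$--colorings of $(\Dd(T),\overline{z})$ is computed by the union of the systems~\eqref{system1}, \eqref{system2} and~\eqref{system3}, and the same is true for $(\Dd(T),\overline{z}')$ with~\eqref{system1} replaced by~\eqref{system4}. The crucial observation is that \eqref{system2} and~\eqref{system3} depend on neither $\overline{z}$ nor $\overline{z}'$, while the equations in~\eqref{system1} and~\eqref{system4} are indexed string by string. For every string in $T_i$ with $i\notin I$ one has $z_i=z'_i$ by the definition of $I$, so the corresponding equation is literally the same in the two systems. Hence the two systems differ only in the equations corresponding to strings in $\bigsqcup_{i\in I} T_i$, whose total number is
\[
\sum_{i\in I}(2g(\Sigma_i)+k_i-1)\;=\;2\sum_{i\in I} g(\Sigma_i)+\sum_{i\in I}k_i-|I|.
\]

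Finally, since passing from one linear system to another by modifying $N$ equations (over the same set of variables) can change the dimension of the solution space by at most $N$, the inequality $|d_\X(L,\Pp,\overline{z})-d_\X(L,\Pp,\overline{z}')|\leq 2\sum_{i\in I}g(\Sigma_i)+\sum_{i\in I}k_i-|I|$ follows, and subtracting $1$ from each side yields the stated bound on $|a_\X(L,\Pp,\overline{z})-a_\X(L,\Pp,\overline{z}')|$. The only non-routine point — and the step I would double-check — is the combinatorial claim that the special diagram of Section~\ref{specialdiag} can be arranged so that the strings split cleanly as $T=\bigsqcup_i T_i$ with $|T_i|=2g(\Sigma_i)+k_i-1$; this is essentially Remark~\ref{diagram:rem} applied to each $(L_i,\Sigma_i)$ separately, combined with the fact that disjoint Seifert surfaces give rise to primary tangles whose strings are naturally partitioned by $i$.
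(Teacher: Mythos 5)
Your proposal is correct and is precisely the argument the paper intends: the paper proves Theorem~\ref{generalbound2} by simply asserting that ``the very same argument'' as for Theorem~\ref{generalbound} applies, and you have correctly filled in the one refinement that makes this work, namely that the strings of the primary tangle split as $T=\bigsqcup_i T_i$ with $|T_i|=2g(\Sigma_i)+k_i-1$ and that only the equations of type~\eqref{system1} attached to strings in $\bigsqcup_{i\in I}T_i$ differ between the two systems. The linear-algebra step (modifying $N$ equations changes the solution dimension by at most $N$) and the bookkeeping are all as in the paper's proof of Theorem~\ref{generalbound}.
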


\subsection{An example}
The following example shows that Theorem~\ref{generalbound2} could prove more effective
than Theorem~\ref{generalbound} in providing bounds on the genus of links.

Let us consider the tangle $B$ showed in Figure~\ref{bande2:fig}.
Recall that $\Dd_p$ is the dihedral quandle of order $p$,
let $\overline{1}$ be the cycle that assigns the value 
$1\in\Z_{t_{\D_p}}=\Z_2$ to every arc of $B$, 
and let us denote by $C_p(a,b,c,d)$
the number of $\Dd_p$--colorings of $B$ (relative
to the cycle $\overline{1}$) which extend
the colors $a,b,c,d$ assigned on the ``corners'' of the diagram.

\begin{figure}[h]
\begin{center}
\input{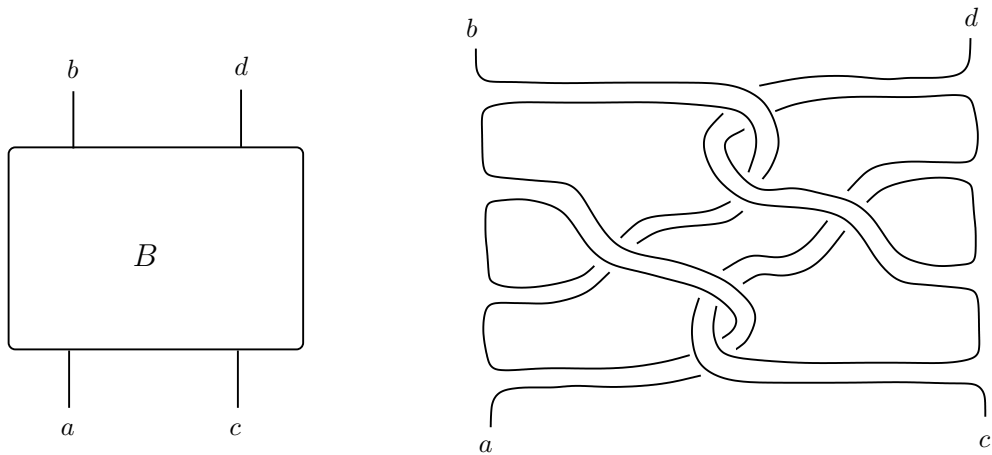}
\caption{\label{bande2:fig} 
The tangle $B$.}
\end{center}
\end{figure}

The following Lemma is proved in~\cite{RR}:

\begin{lem}\label{Blemma}
 We have 
$$
\left\{
\begin{array}{ll}
C_p(a,b,c,d)=p^2\quad &{\rm if}\ a=b,\ c=d\ {\rm and}\ p=3\\
C_p(a,b,c,d)=1\quad &{\rm if}\ a=b,\ c=d\ {\rm and}\ p\neq 3\\
C_p(a,b,c,d)=0\quad & {\rm otherwise}\, .
\end{array}\right.
$$
\end{lem}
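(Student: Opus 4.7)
The plan is to turn the question into a linear-algebra computation over $\Z_p$ and then isolate the mechanism that makes $p=3$ special. Since $\Dd_p$ has operation $x\ast y=2y-x$, every crossing of any diagram contributes the symmetric linear relation $x+y=2z$ (with $z$ the overarc), and in particular the orientation of the overstrand is irrelevant; so for the cycle $\overline{1}$ the space of $\Dd_p$--colorings of $B$ extending a prescribed assignment $a,b,c,d$ on the four corners is an affine subspace of $\Z_p^N$ (where $N$ is the number of internal arcs of $B$), cut out by one linear equation per crossing.

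First I would fix labels for all internal arcs of $B$ and write down the resulting crossing system $Ax=v(a,b,c,d)$ over $\Z$, where $x$ collects the internal arc colors and $v$ is an affine function of the corners. By Gaussian elimination over $\Z$ one reduces the problem to a small set of compatibility conditions involving only $a,b,c,d$, plus a back-substitution that recovers the internal arcs. The left--right symmetry of Figure~\ref{bande2:fig} (the pair $(a,b)$ plays the same role on the left as $(c,d)$ on the right) lets one expect, and then verify, that the compatibility conditions have the symmetric shape
\[
\alpha\,(a-b)\;=\;0,\qquad \alpha\,(c-d)\;=\;0
\]
in $\Z_p$, for a single integer $\alpha$ intrinsic to $B$. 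The content of the lemma is then precisely that $\alpha=\pm 3$ (up to a unit of $\Z$), so that $\alpha$ is invertible in $\Z_p$ exactly when $p\neq 3$ and vanishes when $p=3$.

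Granting that $\alpha=\pm 3$, the trichotomy falls out at once. If $a\neq b$ or $c\neq d$ and $p\neq 3$, then $\alpha(a-b)=0$ (or its partner) has no solution, so $C_p(a,b,c,d)=0$. If $a=b$ and $c=d$ and $p\neq 3$, the compatibility conditions are satisfied, the reduced system on the internal arcs has full rank, and back-substitution gives the unique extension, so $C_p(a,b,c,d)=1$. If $p=3$, the two compatibility equations collapse to $0=0$; the rank drop of exactly $2$ in the system on the internal arcs produces a $2$--dimensional kernel, so every valid choice of corner colors extends in $p^{2}=9$ ways, accounting for the first case of the formula.

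The main obstacle is establishing the value $\alpha=\pm 3$, which depends on the specific crossing pattern of $B$ and is not forced by abstract nonsense. The cleanest route is to use the ribbon-tangle language of Section~\ref{ribbon}: view $B$ as the boundary ribbon tangle $D(T)$ associated to a primary string tangle $T$, and apply Proposition~\ref{diff-out} to express the output colors as linear functions of the input differences $d_j$ with matrix entries $-M_{i,j}\overline{t}^{\sigma(s(j))\overline{z}(j)}H_{\overline{z}(j)}(\overline{t})$. Specializing to $\overline{t}=-1$ and $\overline{z}(j)=1$ gives concrete integer coefficients (built from the linking matrix $M_{i,j}$ and the factor $H_{1}(\overline{t})=2$), and a direct calculation of the $2\times 2$ "transfer" matrix of $B$, together with the matching conditions imposed by the closing arcs, yields a determinantal expression whose value over $\Z$ is $\pm 3$. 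Once this number is pinned down, the rest of the proof is formal.
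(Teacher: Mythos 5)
There is a genuine gap here, and in fact your structural ansatz contradicts the very statement you are trying to prove. You posit that after eliminating the internal arcs the solvability conditions on the corner colors take the form $\alpha(a-b)=0$ and $\alpha(c-d)=0$ for a single integer $\alpha$, and that the lemma amounts to $\alpha=\pm 3$. If that were the whole story, then for $p=3$ both conditions would collapse to $0=0$ for \emph{every} choice of $a,b,c,d$, and your own rank--drop argument would yield $C_3(a,b,c,d)=9$ for all corner assignments; but Lemma~\ref{Blemma} asserts $C_3(a,b,c,d)=0$ whenever $a\neq b$ or $c\neq d$. The correct mechanism must therefore separate two phenomena that your sketch conflates: the elimination has to produce the conditions $a=b$ and $c=d$ with \emph{unit} coefficients (so that they are obstructions for every odd $p$), while the special role of $p=3$ must come from the homogeneous part of the internal system, whose relevant determinant (or Smith normal form) is a power of $3$ up to sign, producing the rank drop of $2$ exactly when $p=3$.

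Beyond this structural error, the step you yourself flag as ``the main obstacle'' --- actually computing the relevant integer invariants from the specific crossing pattern of $B$ in Figure~\ref{bande2:fig} --- is never carried out; you only describe a method (a transfer matrix via Proposition~\ref{diff-out} specialized at $\overline{t}=-1$, $z=1$) without executing it. Since the entire content of the lemma is this concrete computation (nothing about it is forced by general principles, as you correctly observe), the proposal as written is a plan rather than a proof. Note also that the paper itself gives no argument to compare against: Lemma~\ref{Blemma} is quoted from~\cite{RR}, where the count is obtained by explicitly solving the coloring equations of $B$ over $\Z_p$. Your general setup (one linear relation $x+y=2z$ per crossing, orientation irrelevant for dihedral quandles, affine solution space) is fine; what is missing is the actual linear algebra, and what is present in its place is an incorrect guess at its outcome.
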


For every $q\geq 1$, let $L_q$ be the link described in Figure~\ref{bande4:fig}.

\begin{figure}[h]
\begin{center}
\input{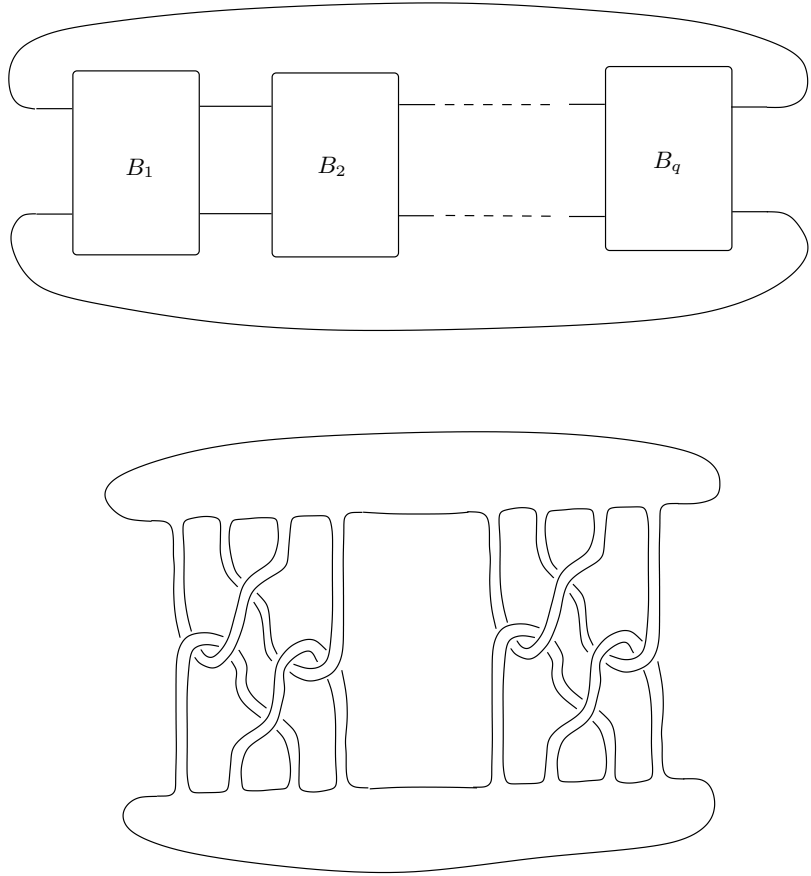}
\caption{\label{bande4:fig} 
On the top: the link $L_q$; every $B_i$ is a copy of the tangle $B$.
On the bottom: the case $q=2$.}
\end{center}
\end{figure}

It is obvious from the picture that $L_q$ is a boundary link such that
$g(L,\Pp_M)\leq 2q$. Let $K_q$ (resp.~$K'_q$) be the component
of $L_q$ on the top half (resp.~the bottom half) of the diagram
shown on the top of Figure~\ref{bande4:fig}. We denote every
$\Pp_M$--cycle $\overline{z}\colon\Pp_M\to\Z_2$ simply by the pair
$(\overline{z}(\{K_q\}),\overline{z}(\{K'_q\}))$, and the integers
$a_{\D_3}(L_q,\Pp_M,(z_1,z_2))$ simply by
$a_{\D_3}(L_q,(z_1,z_2))$.

\begin{prop}\label{H4}
For every $q\geq 1$ we have
$$
a_{\Dd_3}(L_q,(1,1))=2q+1,\quad a_{\Dd_3} (L_q,(1,0))=a_{\Dd_3}(L_q,(0,1))=
a_{\Dd_3}(L_q,(0,0))=1\ .
$$
\end{prop}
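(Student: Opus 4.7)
The plan is to count the $\Dd_3$--colorings of a fixed diagram of $L_q$ for each cycle $(z_1,z_2)\in\{0,1\}^2$, using the decomposition of $L_q$ into $q$ copies $B_1,\ldots,B_q$ of the tangle $B$, plus the closure arcs that assemble the two components $K_q$ (top) and $K'_q$ (bottom). I will label the four corners of $B_i$ by $(a_i,b_i,c_i,d_i)$ following Figure~\ref{bande2:fig}, and track how the chaining of the $B_i$'s and the closures at the two ends of the diagram identify these corners.

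The case $\overline{z}=(0,0)$ is immediate: when both cycle values vanish, every crossing relation degenerates to $a*^0 b=a$, so a coloring must be constant on each component. This gives $c_{\Dd_3}(L_q,(0,0))=|\Dd_3|^2=9$, hence $a_{\Dd_3}(L_q,(0,0))=1$. This also agrees with the general fact, recalled in the paper, that $a_\X(L,\Pp,\overline{0})=k-1$ for any $k$--component link.

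For $(1,1)$, Lemma~\ref{Blemma} applies to each $B_i$ with $p=3$ and yields the constraints $a_i=b_i$, $c_i=d_i$ together with exactly $p^2=9$ internal extensions per valid boundary. Combining this with the chain/closure identifications of $L_q$ forces all $a_i$'s (and hence the $K_q$--boundary values running along the top) to share a common $x\in\Z_3$, and all $c_i$'s (the $K'_q$--boundary values along the bottom) to share a common $y\in\Z_3$, independently of one another. Hence there are $9$ choices of $(x,y)$, each admitting $9^q$ independent internal extensions, giving $c_{\Dd_3}(L_q,(1,1))=9^{q+1}$ and so $a_{\Dd_3}(L_q,(1,1))=2q+1$.

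The remaining cases $(1,0)$ and $(0,1)$ are equivalent by the evident top/bottom symmetry of $L_q$, so it suffices to treat $(1,0)$. The key step is a mixed--cycle analogue of Lemma~\ref{Blemma} for the tangle $B$ with cycle $1$ on its top strand and $0$ on its bottom strand: one has to verify that valid colorings still force $a=b$ and $c=d$, but now admit a \emph{unique} internal extension, because the $p=3$ resonance producing the extra factor $p^2$ in Lemma~\ref{Blemma} disappears once one of the cycles is trivialized. Once this is established, propagating the constraints along the chain and closing up forces colorings to be constant on each component, giving $9$ colorings in total and $a_{\Dd_3}(L_q,(1,0))=1$. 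This mixed--cycle analysis of $B$---which can be carried out by essentially the same direct computation as for Lemma~\ref{Blemma} in~\cite{RR}, but with the crossings whose overcrossing strand has cycle $0$ replaced by transparent crossings---is the main obstacle of the proof: one has to check that removing precisely these crossings introduces no new coloring freedom, and that the $p=3$ phenomenon genuinely requires full cycle $\overline{1}$ on both strands of $B$.
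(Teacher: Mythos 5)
Your treatment of the cycles $(0,0)$ and $(1,1)$ is correct and essentially coincides with the paper's: constant colorings give $a_{\Dd_3}(L_q,(0,0))=1$, and chaining Lemma~\ref{Blemma} through the $q$ copies of $B$ yields $9\cdot 9^q=3^{2q+2}$ colorings for $(1,1)$, hence $a_{\Dd_3}(L_q,(1,1))=2q+1$.

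The gap is in the mixed cases. For $(1,0)$ you reduce everything to a ``mixed--cycle analogue of Lemma~\ref{Blemma}'' --- namely that a copy of $B$ with cycle $1$ on one strand and $0$ on the other still forces $a=b$ and $c=d$, now with a unique internal extension --- but you do not prove this claim; you yourself flag it as ``the main obstacle of the proof''. Since that claim is the entire content of the $(1,0)$ case, and since verifying it would mean redoing the computation of~\cite{RR} for a new labelling of a tangle that is only specified by a figure, the proof is incomplete as written. The paper sidesteps this computation with a short global argument that you should adopt (or else actually carry out your local computation): (i) because the cycle vanishes on $K'_q$, undercrossings of $K_q$ beneath arcs of $K'_q$ do not alter colors, so every $(1,0)$--coloring restricts to a $1$--coloring of the knot $K_q$ alone; as $K_q$ is trivial, this restriction is constant; (ii) $K_q$ is made of bands, and by Lemma~\ref{const-diff} (see also Lemma~\ref{const-diff2} and the discussion in Section~\ref{ribbon}) passing under a band whose two boundary arcs carry equal colors produces no net change, so the coloring of $K'_q$ is a valid $0$--coloring of $K'_q$ alone, hence constant. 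This gives exactly the $9$ colorings that are constant on each component, so $a_{\Dd_3}(L_q,(1,0))=1$, and $(0,1)$ follows by symmetry as you say.
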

\begin{proof}
As usual, the only $(0,0)$-colorings of $L_q$ are those which are constant
on every component of $L_q$, so $a_{\D_3}(L_q,(0,0))=1$.

Let us now concentrate on $(1,0)$-colorings of $L_q$.
It is immediate to observe that $K_q$ and $K'_q$ are both trivial.
Since the cycle $(1,0)$ vanishes
on $K'_q$,
it is immediate to realize that  any such coloring
restricts to a coloring of $K_q(1)$. Since $K_q$ is trivial, this implies
that every $(1,0)$--coloring of $L_q$ is constant on $K_q$. 
The discussion in Section~\ref{ribbon} now implies that  
that the colorings of $K'_q$ are not affected by the crossings between the bands of $K'_q$
and the bands of $K_q$. Then, every $(1,0)$-coloring
of $L_q$ restricts to a $0$-coloring (\emph{i.e.}~to a constant coloring)
of $K'_q$.
We have proved 
that
the only $(1,0)$-colorings are the ones which are constant on every component
of $L_q$, so $a_{\D_3}(L,(1,0))=1$.
The same is true (by the very same argument) also for $(0,1)$-colorings,
so $a_{\D_3}(L,(0,1))=1$.

Let us now fix two colors $a,b\in\D_3$. An easy application of
Lemma~\ref{Blemma} shows that the number of the colorings 
of $L_q$ which take the value $a$ (resp.~$b$) on the arc of $K_q$
(resp.~of $K'_q$) joining the tangles $B_1$ and $B_q$ is equal
to $3^{2q}$. Therefore, the number of $(1,1)$--colorings of $L_q$ is equal
to $3^{2q+2}$, whence 
the conclusion.
\end{proof}

\begin{cor}\label{esempioboundary}
For every $q\geq 1$ we have
$$
g(L_q,\Pp_M)=2q\ .
$$
\end{cor}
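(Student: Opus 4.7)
The plan is to combine the upper bound $g(L_q,\Pp_M)\leq 2q$, which is visible from the diagram in Figure~\ref{bande4:fig}, with a matching lower bound obtained by feeding the quandle computations of Proposition~\ref{H4} into the refined Theorem~\ref{generalbound2}. The naive application of Corollary~\ref{lowerbound} only gives $g(L_q,\Pp_M)\geq q$, which is half of what we want; the sharper inequality of Theorem~\ref{generalbound2}, which separates the contributions of the individual Seifert surfaces, is exactly what closes the gap.

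More precisely, fix any system $(\Sigma_1,\Sigma_2)$ of disjoint Seifert surfaces with $\partial\Sigma_1=K_q$ and $\partial\Sigma_2=K_q'$. First I compare the cycles $\overline{z}=(1,1)$ and $\overline{z}'=(1,0)$ on $\Pp_M$: the index set of disagreement is $I=\{2\}$, and since $K_q'$ has $k_2=1$ component, Theorem~\ref{generalbound2} (applied to $\X=\Dd_3$) gives
\[
\bigl|a_{\Dd_3}(L_q,(1,1))-a_{\Dd_3}(L_q,(1,0))\bigr|\leq 2g(\Sigma_2)+1-1=2g(\Sigma_2).
\]
By Proposition~\ref{H4} the left-hand side equals $2q$, so $g(\Sigma_2)\geq q$. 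Then I do the symmetric comparison between $(1,1)$ and $(0,1)$ to obtain $g(\Sigma_1)\geq q$. Summing, $g(\Sigma_1)+g(\Sigma_2)\geq 2q$, and taking the infimum over all admissible systems of disjoint Seifert surfaces yields $g(L_q,\Pp_M)\geq 2q$.

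Combined with the upper bound visible in the picture (each of the $q$ pairs of bands can be capped off by a surface of genus one on each side, summing to $2q$) this gives the equality. The only subtle point is to make sure that, in applying Theorem~\ref{generalbound2}, one really uses the refined estimate involving only the surfaces corresponding to the indices on which the two cycles differ — this is what prevents loss of a factor $2$ and is the reason why a single dihedral quandle $\Dd_3$ together with the four computations of Proposition~\ref{H4} suffice to pin down $g(L_q,\Pp_M)$ exactly. No further obstacle is expected, since all the hard work has been done in Lemma~\ref{Blemma} and Proposition~\ref{H4}.
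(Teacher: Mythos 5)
Your proof is correct and is essentially identical to the paper's: both obtain the lower bound by applying Theorem~\ref{generalbound2} twice, once to the pair of cycles $(1,1),(0,1)$ and once to $(1,1),(1,0)$, using the values from Proposition~\ref{H4} to force each surface in a disjoint Seifert system to have genus at least $q$. The upper bound from the diagram is also handled the same way, so there is nothing to add.
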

\begin{proof}
Let $(\Sigma_q,\Sigma'_q)$ be a system of disjoint
 Seifert surfaces for $K_q, K'_q$. We have to show that
$g(\Sigma_q)+g(\Sigma'_q)\geq 2q$. By Theorem~\ref{generalbound2} we have
$$
\begin{array}{lllll}
2q&=&|a_{\D_3}(L_q,(1,1))-a_{\D_3}(L_q,(0,1))|&\leq &2g(\Sigma_q)\ ,\\
2q&=&|a_{\D_3}(L_q,(1,1))-a_{\D_3}(L_q,(1,0))|&\leq &2g(\Sigma'_q)\ ,
\end{array}
$$
so $g(\Sigma_q)\geq q$ and $g(\Sigma'_q)\geq q$, whence the conclusion.
\end{proof}

It is maybe worth mentioning that the bound provided by Corollary~\ref{lowerbound}
is less effective in order to compute $g(L,\Pp_M)$. In fact, Proposition~\ref{H4}
implies that $\Aa_\Qq(L,\Pp_M)=2q+1$, so the inequality
$\Aa_\Qq(L,\Pp_M)\leq 2g(L,\Pp_M)+1$ only implies $g(L,\Pp_M)\geq q$.

\section{A proof of Theorem~\ref{inoue:teo}}
With notations as in the preceding Section, 
let us describe more explicitly
the system computing 
the $\X$--colorings
of $(L,\Pp_m,\overline{z})$, where $\X=\F(p,h(t))$ is a quandle in $\Qq_\Ff$.

\subsection{More details on the system associated to a special diagram}\label{diag:det}
Let us now concentrate on the case $\Pp=\Pp_m$, so that there exists
$z\in\N$ such that $z=\overline{z}(i)$ for every $i=1,\ldots,k$
(recall that $k$ is the number of components of $L$).
We also set $g=g(L)=g(L,\Pp_m)$.

Then, the linear system described by equations~\eqref{system1},
\eqref{system2} and \eqref{system3} reduces to the system
\begin{equation}\label{sistemainoue}
f_i = (1-\overline{t}^{-z})\sum_{j=1}^{\alpha} M_{i,j}d_j, \qquad
S(\{a_i\},\{d_i\},\{f_i\})=0 \ ,
\end{equation}
where $\alpha=2g+k-1$ and 
$a_i,d_i,f_i$
have to be considered as variables in $\F(p,h(t))$. Moreover, the system
$S(\{a_i\},\{d_i\},\{f_i\})=0$ has integer coefficients.

Let us look more closely to the closing conditions $S\{a_i\},\{d_i\},\{f_i\})=0$.
By looking at the definition of special diagram for $L$, one can easily show that such closing conditions reduce, after easy simplifications, to the system
$$
\left\{
\begin{array}{ll}
a_{2i-1}=a_{2i}-d_{2i-1},\ 
d_{2i}=f_{2i-1},\
f_{2i}=-d_{2i-1},\quad & i=1,\ldots,g,\\
a_{2i}=a_{2i+1}+d_{2i+1},\quad & i=1,\ldots,g-1,\\
a_i=a_{2g},\ f_i=0,\quad & i=2g+1,\ldots,\alpha\\
a_{2g}=a_1+d_1\quad &   
\end{array}\right. \quad . 
$$
An easy inductive argument shows that the condition $a_{2g}=a_1+d_1$ is a consequence
of equations $a_{2i-1}=a_{2i}-d_{2i-1}$, $i=1,\ldots,g$, and 
$a_{2i}=a_{2i+1}+d_{2i+1}$, $i=1,\ldots,g-1$. 
Therefore, the system~\eqref{sistemainoue} 
is equivalent to the system
\begin{equation}\label{ino2}
\left\{
\begin{array}{ll}
a_{2i-1}=a_{2i}-d_{2i-1},\ & i=1,\ldots,g,\\
a_{2i}=a_{2i+1}+d_{2i+1},\ & i=1,\ldots,g-1,\\
a_i=a_{2g},\ & i=2g+1,\ldots,\alpha \\
-d_{2i}+(1-\overline{t}^{-z})\sum_{j=1}^\alpha M_{2i-1,j}d_j=0,\ & i=1,\ldots,g,\\
d_{2i-1}+(1-\overline{t}^{-z})\sum_{j=1}^\alpha M_{2i,j}d_j=0,\ & 
i=1,\ldots,g,\\
(1-\overline{t}^{-z})\sum_{j=1}^\alpha M_{i,j}d_j=0,\ & i=2g+1,\ldots,\alpha\ ,
\end{array}
\right. 
\end{equation}
where we have eliminated the $f_i$'s from the variables.

Let us now define two square matrices $N(z)$ and $J$
of order $\alpha$ with coefficients in $\Lambda$
as follows:
$$
J_{i,j}=\left\{\begin{array}{ll}
 -1 & {\rm if}\ i=2h-1,\ j=2h,\ h\leq 2g\\
 1 & {\rm if}\ i=2h,\ j=2h-1,\ h\leq 2g\\
 0 & {\rm otherwise}\end{array}
 \right.\qquad
,
$$
(so $J$ has in fact integer coefficients), and
$$
N(z)=(t^z-1)M + t^z J\ .
$$

We also denote by $N(z,p)$ the matrix obtained by replacing each coefficient
of $N(z)$ by its image via $\pi_p\colon \Lambda\to\Lambda_p$, and by
$N(z,p,h(t))$ the matrix obtained by further projecting each
coefficient of $N(z,p)$ onto $\F(p,h(t))$. 

We are now ready to prove the following:

\begin{lem}\label{Qlemma}
The space of 
$\X$--colorings of $(L,\Pp_m,\overline{z})$ is in natural bijection
with 
the direct sum
$$
\F(p,h(t))\oplus \ker N(z,p,h(t))\ ,
$$ 
so
$$
a_\X(L,\Pp_m,\overline{z})=\dim \ker N(z,p,h(t))\ .
$$
\end{lem}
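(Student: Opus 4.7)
The plan is to read off the matrix form of the coloring system directly from the already-reduced system~\eqref{ino2}, whose variables naturally split into the $a_i$'s and the $d_j$'s. The first three families of equations in~\eqref{ino2} involve only the $a_i$'s (with the $d_j$'s as parameters), while the last three are homogeneous in the $d_j$'s alone. I will treat the two blocks separately and then combine them.

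For the first block, the recursions together with the equalities $a_i=a_{2g}$ for $i>2g$ express each $a_i$ uniquely as an affine function of $a_{2g}$ and of the $d_j$'s. Hence, once $(d_1,\ldots,d_\alpha)$ is chosen, the $a_i$'s are determined by a single free parameter $a_{2g}\in\F(p,h(t))$, which produces the $\F(p,h(t))$ summand of the claimed bijection.

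For the second block, I will multiply each equation through by $\overline{t}^z$---a unit in $\F(p,h(t))$, since the irreducible polynomial $h(t)$ may be chosen with $h(0)\neq 0$, so that $\overline{t}$ is invertible---and use the identity $\overline{t}^z(1-\overline{t}^{-z})=\overline{t}^z-1$. A straightforward row-by-row matching against the entries $J_{2i-1,2i}=-1$, $J_{2i,2i-1}=1$ of $J$ (zero elsewhere) then identifies the resulting equations with the rows of $N(z,p,h(t))\cdot(d_1,\ldots,d_\alpha)^T=0$; in particular the last family, corresponding to indices $i>2g$ where $J$ vanishes, becomes $(\overline{t}^z-1)\sum_j M_{i,j}d_j=0$, which is exactly row $i$ of $N(z,p,h(t))$.

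Combining the two blocks yields a natural bijection between $\F(p,h(t))\oplus\ker N(z,p,h(t))$ and the space of $\X$-colorings of $(L,\Pp_m,\overline{z})$, sending $(a_{2g},(d_j))$ to the unique coloring it determines. The dimension formula $a_\X(L,\Pp_m,\overline{z})=\dim\ker N(z,p,h(t))$ follows at once. The only real point requiring care is the bookkeeping of signs and indices in the row-by-row matching, together with the verification that multiplying by the unit $\overline{t}^z$ does not alter the kernel; I do not anticipate any substantial obstacle beyond this.
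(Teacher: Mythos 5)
Your proposal is correct and follows essentially the same route as the paper: both split system~\eqref{ino2} into the $a$--block (which, being a consistent chain of affine relations, leaves exactly one free parameter once the $d_j$'s are fixed) and the homogeneous $d$--block, and both identify the latter with $N(z,p,h(t))$ after clearing the unit factor $\overline{t}^{-z}$ (the paper factors it out, you multiply through by $\overline{t}^{z}$ --- the same observation). The only cosmetic difference is your choice of $a_{2g}$ rather than $a_1$ as the free parameter, which is immaterial.
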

\begin{proof}
The previous discussion shows that the space of colorings we are considering
is in natural bijection with the solutions of the system~\eqref{ino2}.
It is immediate to realize that, for every such solution,
each
$a_i$, $i\geq 2$, is uniquely determined by $a_1$ and the $d_j$'s. 
Moreover, once a solution of the system~\eqref{ino2} is fixed, 
we can obtain another solution just by adding 
a constant term to every $a_i$. 

Therefore, the space of the solutions of~\eqref{ino2}
is isomorphic to the direct sum
of $\F(p,h(t))$ with the space of the solutions of the system
$$
\begin{array}{ll}
-d_{2i}+(1-\overline{t}^{-z})\sum_{j=1}^\alpha M_{2i-1,j}d_j=0,\quad & i=1,\ldots,g,\\
d_{2i-1}+(1-\overline{t}^{-z})\sum_{j=1}^\alpha M_{2i,j}d_j=0,\quad &
i=1,\ldots,g,\\
(1-\overline{t}^{-z})\sum_{j=1}^\alpha M_{i,j}d_j=0,\quad & i=2g+1,\ldots,\alpha\ .
\end{array}
$$
The matrix encoding this system is equal to $\overline{t}^{-z}N(z,p,h(t))$, and $\overline{t}^{-z}$
is invertible in $\F(p,h(t))$,
whence the conclusion.
\end{proof}

\subsection{Some relations between $M$ and the Seifert matrix of 
$L$}
Let $\Sigma$ be the Seifert surface of $L$ encoded by the fixed special diagram $\Dd (T)$ we are considering,
and observe that each (oriented) string of $T$ canonically defines an (oriented) arc lying 
on $\Sigma$. 
The module $H_1(\Sigma;\Z)$ admits a special geometric basis 
$\{\beta_1,\dots,\beta_{\alpha}\}$, where $\beta_j$ is obtained by closing the
$j$--th string of $T$ in the portion of $\Sigma$ carried by the closing
tangle $C$, in such a way that we introduce just one intersection
point between $\beta_{2i-1}$ and $\beta_{2i}$, $i=1,\dots,\
g$, while $\beta_i$ is disjoint from $\beta_j$ for every $i=2g+1,\ldots,\alpha$, $j=1,\ldots,\alpha$. 
Recall that the Seifert matrix $S(L)$ of $L$ is the square matrix with integer coefficients
defined by $S(L)_{i,j}={\rm lk} (\beta_i,\beta_j^+)$, $i,j=1,\ldots,\alpha$, where
${\rm lk} (\beta_i,\beta_j^+)$ is 
the linking number (in $S^3$) between 
$\beta_i$ and the knot $\beta_j^+$ obtained by slightly pushing $\beta_j$ to the positive side of $\Sigma$.
>From the very definition of linking number we readily obtain the following:

\begin{lem}\label{seifert}
We have
$$
S(L)=\frac{M+M^T+J}{2}\ .
$$
\end{lem}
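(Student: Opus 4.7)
The plan is to compute each entry $S(L)_{i,j} = \lk(\beta_i, \beta_j^+)$ directly from the special diagram $\Dd(T)$, using the fact that for two disjoint oriented knots the linking number equals one half the signed sum of all crossings between them in a regular projection. The key observation is that if we shrink each ribbon of $D(T)$ to its core, we obtain a diagram of the basis curves $\beta_1, \dots, \beta_\alpha$; and if we then perform the positive push-off of $\beta_j$, we obtain a diagram of $\beta_i \cup \beta_j^+$ in which every crossing falls into one of two families, according to whether it comes from the primary tangle or from the closing tangle.

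First I would analyze the contribution from the primary tangle $T$. A crossing between string $i$ and string $j$ in $T$ with sign $\epsilon$ descends, after replacing the two ribbons by their core curves, to a single crossing between $\beta_i$ and $\beta_j^+$ of the same sign $\epsilon$ (the push-off being parallel to and arbitrarily close to $\beta_j$ does not alter over/under relations away from $\Sigma$). Summing $\epsilon$ over all such crossings between strings $i$ and $j$ yields, by the very definition of $M_{i,j}$ and $M_{j,i}$, the value $M_{i,j} + M_{j,i}$. Dividing by $2$ then accounts for exactly the $(i,j)$-entry of $(M+M^T)/2$. The case $i=j$ is handled identically: each self-crossing of string $i$ produces two crossings of the same sign between $\beta_i$ and $\beta_i^+$, whose signed count is $2M_{i,i}$, matching the diagonal.

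Next I would analyze the contribution from the closing tangle $C$. By construction, the only intersections between distinct $\beta_i, \beta_j$ on $\Sigma$ occur when $\{i,j\} = \{2h-1, 2h\}$ for some $h \leq g$, and in that case there is precisely one intersection point $p_h$ lying in the closure of the band pattern carried by $C$. For all other pairs $(i,j)$ no such intersection exists and the closing tangle contributes nothing. At the intersection point $p_h$, the push-off $\beta_j^+$ becomes disjoint from $\beta_i$ but a new crossing is created in the diagram, whose sign is determined by the standard orientation of $\Sigma$ and the choice of ``positive side''. The main obstacle is precisely to fix conventions so that this sign comes out to equal $J_{i,j}$ (i.e.~$-1$ when $i=2h-1, j=2h$ and $+1$ when $i=2h, j=2h-1$); this is essentially a local model computation that one does once for the elementary band-pair on a genus-one piece of the model surface $\Sigma_{g,s}$ from Section~\ref{specialdiag}, and then transports to every handle of the special diagram.

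Finally, assembling the two contributions, one gets
\[
\lk(\beta_i,\beta_j^+) \;=\; \tfrac{1}{2}(M_{i,j} + M_{j,i}) + \tfrac{1}{2} J_{i,j},
\]
which is exactly the $(i,j)$-entry of $(M+M^T+J)/2$, giving the claimed identity. As a sanity check built into the plan, one verifies that $S(L)-S(L)^T = (J-J^T)/2 = J$, the standard antisymmetric intersection matrix on $H_1(\Sigma;\Z)$ in the geometric basis $\{\beta_i\}$, which is the usual relation between a Seifert matrix and the intersection form of a Seifert surface and provides a useful consistency test of the sign conventions chosen in the previous paragraph.
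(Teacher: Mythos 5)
Your proposal is correct and is essentially the computation the paper itself invokes: the authors prove the lemma simply by appealing to ``the very definition of linking number'' applied to the geometric basis $\{\beta_i\}$ carried by the special diagram, and your crossing-by-crossing count (primary tangle contributing $(M_{i,j}+M_{j,i})/2$, the single band-pair intersection in the closing tangle contributing $J_{i,j}/2$) is exactly the fleshed-out version of that one-line argument. The consistency check $S(L)-S(L)^T=J$ is a sensible way to pin down the sign convention at the intersection point, which is the only place where care is needed.
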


Let us point out another interesting property of $M$ that will prove useful later:

\begin{lem}\label{B==1}
We have 
$$
M-M^T=-J\ .
$$
\end{lem}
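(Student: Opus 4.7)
The plan is to identify the antisymmetric part $M-M^T$ with the intersection form of the distinguished basis $\{\beta_1,\dots,\beta_\alpha\}$ on $\Sigma$ via a linking-number argument, and to read off the signs by direct inspection of the closing tangle.

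Both $M-M^T$ and $-J$ have vanishing diagonal, so the issue is off-diagonal. Fix $i\neq j$, and consider the planar diagram of $\beta_i\cup\beta_j$ obtained by closing the $i$-th and $j$-th strings of $T$ inside $\Sigma$. Split the signed under-crossing count of $\beta_i$ with respect to $\beta_j$ into $M_{i,j}+C_{i,j}$, where $C_{i,j}$ is the contribution of the closing tangle. When $\{i,j\}$ is \emph{not} one of the handle pairs $\{2h-1,2h\}$, the curves $\beta_i$ and $\beta_j$ are disjoint in $S^3$ by construction, so the total count equals $\mathrm{lk}(\beta_i,\beta_j)$, and symmetry of the linking number gives $M_{i,j}-M_{j,i}=C_{j,i}-C_{i,j}$; moreover, inspection of Figures~\ref{surface4:fig} and~\ref{surface5:fig} shows that in this non-handle case the closing arcs of $\beta_i$ and $\beta_j$ are parallel in the closing tangle, so $C_{i,j}=C_{j,i}=0$ and $(M-M^T)_{i,j}=0=-J_{i,j}$.

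The handle pairs $(2h-1,2h)$ require a separate argument, since here $\beta_{2h-1}$ and $\beta_{2h}$ meet transversely at a point of $\Sigma$. One fixes this by pushing $\beta_{2h-1}$ slightly to the positive side of $\Sigma$ so that the resulting $\beta_{2h-1}^+$ is disjoint from $\beta_{2h}$ in $S^3$; the perturbation resolves the single double point of the diagram into one honest crossing in the closing tangle region, whose sign and over/under pattern are prescribed by the orientation of $\Sigma$ and the doubling convention of Figure~\ref{ribbontangle2}. A direct reading of this crossing, consistent with the closing relations $d_{2i}=f_{2i-1}$ and $f_{2i}=-d_{2i-1}$ used in the proof of Theorem~\ref{generalbound}, then yields $(M-M^T)_{2h-1,2h}=1$ and $(M-M^T)_{2h,2h-1}=-1$, exactly matching the corresponding entries of $-J$.

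The main obstacle is precisely this last combinatorial check: matching the orientations of the $\beta_i$'s, of the bands of $D(T)$, and of the Seifert surface $\Sigma$ to pin down the sign of the handle crossing is the only place where real bookkeeping is needed; once this is done the identity $M-M^T=-J$ follows immediately.
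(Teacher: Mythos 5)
Your argument is correct, but it follows a genuinely different route from the paper's. The paper's proof is purely two--dimensional: it observes that $M_{i,j}-M_{j,i}$ equals the algebraic intersection number of the \emph{planar projections} of the $j$--th and $i$--th strings of $T$, and then closes the strings into the curves $\beta_i,\beta_j$; since two closed curves in the plane have vanishing algebraic intersection number, $M_{i,j}-M_{j,i}$ is forced to be minus the contribution of the closing arcs, which is $0$ when the closing arcs are nested and $\pm 1$ for the interleaved handle pairs $(2h-1,2h)$. No perturbation off $\Sigma$ and no linking numbers are needed, and handle and non--handle pairs are treated uniformly. You instead invoke the symmetry $\lk(\beta_i,\beta_j)=\lk(\beta_j,\beta_i)$ of the three--dimensional linking number for the disjoint non--handle pairs, and for the handle pairs you must first push $\beta_{2h-1}$ off $\Sigma$ and read the resolved crossing. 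That works, but note that the outcome cannot depend on the side to which you push: switching sides exchanges which curve is the over--strand and simultaneously flips the knot--theoretic crossing sign, so the difference $M_{2h-1,2h}-M_{2h,2h-1}$ always equals the planar intersection sign of the two closing arcs at their unique crossing --- which is exactly the quantity the paper reads off directly. So your ``real bookkeeping'' in the handle case collapses to the same single planar sign, and the orientation of $\Sigma$ and the doubling convention are in fact irrelevant to it; the paper's formulation is more economical for precisely this reason, while yours has the merit of making the link with $\lk$ and with the Seifert pairing of Lemma~\ref{seifert} explicit.
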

\begin{proof}
Since both $M-M^T$ and $J$ are antisymmetric, it is sufficient to show that
for every $i<j$ we have
$M_{i,j}-M_{j,i}=1$ if $j=i+1$, and $M_{i,j}-M_{j,i}=0$ otherwise.
However, it follows from the definition of $M$ that the number $M_{i,j}-M_{j,i}$
is equal to the algebraic intersection number between the projections of
the $j$--th and the $i$--th string of $T$ 
(taken in this order) onto the plane containing the special diagram.
If $j>i+1$ (resp.~$j=i+1$), such number is equal to the algebraic intersection number between the projections of
$\beta_j$ and of $\beta_i$ (resp.~is equal to $1$ plus the algebraic intersection number between the projections of
$\beta_j$ and of $\beta_i$). But the  algebraic intersection number between the projections of
$\beta_j$ and of $\beta_i$ is obviously null, whence the conclusion.
\end{proof}

Putting together Lemmas~\ref{seifert} and~\ref{B==1} we get the following:

\begin{cor}\label{seifertcor}
 We have
$$
S(L)=M+J,\qquad t^zS(L)-S(L)^T=(t^z-1)M+t^zJ=N(z)\ . 
$$
\end{cor}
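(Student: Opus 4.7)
The plan is to derive both identities by pure algebraic manipulation of the two preceding lemmas, without introducing any new geometric input. The only ingredients will be Lemma~\ref{seifert}, which gives $S(L)=\tfrac{1}{2}(M+M^T+J)$, Lemma~\ref{B==1}, which gives $M-M^T=-J$ (equivalently $M^T=M+J$), and the elementary observation that $J$ is antisymmetric, i.e.~$J^T=-J$, which is immediate from its explicit definition.

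First I would establish $S(L)=M+J$. Substituting $M^T=M+J$ into the formula of Lemma~\ref{seifert} yields
\[
S(L)=\frac{M+(M+J)+J}{2}=\frac{2M+2J}{2}=M+J,
\]
which is the first equality claimed in the corollary.

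Next I would derive the expression for $t^z S(L)-S(L)^T$. Using the identity just proved we have
\[
t^zS(L)-S(L)^T=t^z(M+J)-(M+J)^T=t^zM+t^zJ-M^T-J^T.
\]
Applying $M^T=M+J$ (Lemma~\ref{B==1}) and $J^T=-J$ gives
\[
t^zS(L)-S(L)^T=t^zM+t^zJ-(M+J)+J=(t^z-1)M+t^zJ,
\]
which by the definition of $N(z)$ equals $N(z)$. This proves the second equality.

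There is no real obstacle here: the corollary is a purely formal consequence of Lemmas~\ref{seifert} and~\ref{B==1} together with the antisymmetry of $J$. The only thing worth emphasizing, for the reader, is that the factor $\tfrac{1}{2}$ in Lemma~\ref{seifert} disappears precisely because $M^T-M=J$, so that $M+M^T+J=2(M+J)$; this is exactly what makes the link between the combinatorial matrix $M$ coming from the tangle and the Seifert matrix $S(L)$ so clean, and what ultimately allows one to rewrite the coloring matrix $N(z)$ in the classical Alexander-theoretic form $t^zS(L)-S(L)^T$.
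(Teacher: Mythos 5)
Your proof is correct and is exactly the argument the paper intends: the corollary is stated there with no written proof beyond ``putting together Lemmas~\ref{seifert} and~\ref{B==1}'', and your explicit computation (using $M^T=M+J$ and $J^T=-J$ to get $S(L)=M+J$, equivalently $S(L)^T=M$, and then $t^zS(L)-S(L)^T=(t^z-1)M+t^zJ=N(z)$) is precisely that omitted verification.
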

 
\subsection{Proof of Theorem~\ref{inoue:teo}}\label{proofinoue}


Let $\widetilde{X}(L)$ and $A^{(p)} (L)$ be the cyclic covering and
the $\Lambda_p$--module defined in the Introduction.
We have the following: 

\begin{lem}\label{reduce:lemma}
The module $A^{(p)} (L)$ admits the square presentation matrix
$$
tS(L)^{(p)}-(S(L)^{(p)})^T= N(1,p)\ .
$$ 
In particular, $\Delta^{(p)}_i (L)(t)=e^{(p)}_i (L)(t)=1$ for every
$i>\alpha$.
\end{lem}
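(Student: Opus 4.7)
The plan is to reduce the statement to the classical computation of the Alexander module of a link from a Seifert surface, and then read off the assertion about the elementary ideals purely formally.

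First I would recall (or reprove, if a self-contained argument is desired) the well--known presentation of the Alexander module in the present setting. Starting from the special diagram $\Dd(T)$ fixed in Section~\ref{specialdiag}, let $\Sigma$ be the associated Seifert surface of $L$, and let $\{\beta_1,\dots,\beta_{\alpha}\}$ be the geometric basis of $H_1(\Sigma;\Z)$ constructed there. Cutting the link exterior $\compl(L)$ open along $\Sigma$ produces a compact manifold $Y$ whose boundary contains two parallel copies $\Sigma^{\pm}$ of $\Sigma$; the infinite cyclic cover $\widetilde{X}(L)$ is then obtained by gluing $\Z$ many copies of $Y$ end to end, identifying $\Sigma^+$ of the $n$-th copy with $\Sigma^-$ of the $(n+1)$-st. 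A Mayer--Vietoris argument for this decomposition, carried out with $\Z_p$ coefficients, gives an exact sequence whose connecting map is described by $tS(L)^{(p)}-(S(L)^{(p)})^T$: the entries of $S(L)$ and $S(L)^T$ arise precisely as the positive and negative push--off linking numbers of the $\beta_i$'s. Standard bookkeeping then identifies $A^{(p)}(L)$ with the cokernel of this $\Lambda_p^{\alpha}\to\Lambda_p^{\alpha}$ map. The equality $tS(L)^{(p)}-(S(L)^{(p)})^T=N(1,p)$ is now just Corollary~\ref{seifertcor} specialized at $z=1$ and reduced modulo $p$ via $\pi_p$.

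The second assertion is then a purely formal consequence of elementary ideal theory. Since $A^{(p)}(L)$ admits a \emph{square} presentation of size $\alpha\times\alpha$, for $0\leq j\leq\alpha$ the ideal $E_j^{(p)}(L)$ is generated by the $(\alpha-j)\times(\alpha-j)$ minors of $N(1,p)$, while for $j\geq\alpha$ the usual convention that the empty product equals $1$ forces $E_j^{(p)}(L)=\Lambda_p$. Translated into the normalization adopted in the Introduction, this yields $\Delta^{(p)}_i(L)(t)\doteq 1$ for every $i>\alpha$, and hence also $e^{(p)}_i(L)(t)=\Delta^{(p)}_i(L)(t)/\Delta^{(p)}_{i+1}(L)(t)\doteq 1$ in the same range.

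The only point requiring care is the passage from the classical statement with $\Z$ coefficients (the version most often found in the literature) to the version with $\Z_p$ coefficients needed here. This causes no trouble in practice: either one runs the Mayer--Vietoris argument directly over $\Z_p$, or one uses that $\widetilde{X}(L)$ is connected, so that the Tor term in the universal coefficient formula vanishes, giving $A^{(p)}(L)\cong A(L)\otimes_{\Lambda}\Lambda_p$ and hence a presentation over $\Lambda_p$ by reduction mod $p$ of the integral presentation matrix. This is the step I would flag as the only subtlety in what is otherwise a routine verification.
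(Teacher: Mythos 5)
Your argument is correct and follows essentially the same route as the paper: the classical Seifert-surface/Mayer--Vietoris presentation, with the mod $p$ reduction handled either by running the argument directly over $\Z_p$ or via the Universal Coefficient Theorem identification $A^{(p)}(L)\cong A(L)\otimes_\Z\Z_p$ --- exactly the two alternatives the authors give. Your explicit derivation of $\Delta_i^{(p)}(L)(t)\doteq 1$ for $i>\alpha$ from the square presentation is a welcome spelling-out of a step the paper leaves implicit.
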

\begin{proof}
The usual proof that $tS(L)-S(L)^T$ is a presentation
of $H_1 (\widetilde{X}(L);\Z)$ over $\Lambda$ 
relies on some standard Mayer--Vietoris argument and on 
Alexander--Lefschetz duality, which ensures 
that, if $\{\beta_1,\ldots,\beta_{2g+k-1}\}$ is any base of the first homology group of
a Seifert surface $\Sigma$ for $L$, then the first homology group of $S^3\setminus \Sigma$ admits
a dual base $\{\gamma_1,\ldots,\gamma_{\alpha}\}$ such that ${\rm lk} (\beta_i,\gamma_j)=\delta_{ij}$ (see \emph{e.g.}~\cite[Chapter 8]{burde}).
Both these tools may still be exploited  
when $\Z$ is replaced by $\Z_p$,
and this readily implies the conclusion.

An alternative proof can be obtained as follows. An easy application of the Universal
Coefficient Theorem for homology shows that $A_p (L)\cong A(L)\otimes_\Z \Z_p$,
and this easily implies that any presentation
$$
\xymatrix{
0\ar[r] & \Lambda^r\ar[r] & \Lambda^s \ar[r] & A(L) \ar[r] & 0}
$$
induces a presentation
$$
\xymatrix{
\Lambda_p^r \ar[r] \ar@{<->}[d]
& \Lambda_p^s \ar[r] \ar@{<->}[d] & A_p(L)\ar@{<->}[d]\ar[r] & 0\\
\Lambda^r\otimes_\Z \Z_p 
& \Lambda^s\otimes_\Z \Z_p & A(L)\otimes_\Z \Z_p\ ,}
$$
whence the conclusion.
\end{proof}

The following result describes some relations between 
$\Delta_i(L)(t)$ and $\Delta^{(p)}_i(L)(t)$, where $i\in\N$.

\begin{cor}\label{princ:cor}
\begin{enumerate}
 \item 
For every $i\in\N$ we have
$E_i^{(p)} (L)=\pi_p (E_i (L))$. 
\item
For every $i\in\N$ the polyonomial
$\pi_p(\Delta_i(L)(t))$ divides $\Delta^{(p)}_i (L)(t)$
in $\Lambda_p$.
\item
 We have
$
\Delta^{(p)}_1(L)(t)=\pi_p (\Delta (L)(t))
$.
\item
If $f(t)\in E_i(L)$, then  
$\Delta_i^{(p)}(L)(t)$ divides $\pi_p(f(t))$ in $\Lambda_p$.
\end{enumerate}
\end{cor}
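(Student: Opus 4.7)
\medskip

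The plan is to derive all four statements formally from Lemma~\ref{reduce:lemma}. The essential point contained in (and used by) that lemma is the base--change relation $A^{(p)}(L)\cong A(L)\otimes_\Lambda \Lambda_p$: indeed, its second proof, via the Universal Coefficient Theorem, is exactly this identification, and it upgrades the square presentation $tS(L)-S(L)^T$ of $A(L)$ over $\Lambda$ to a presentation $\pi_p\bigl(tS(L)-S(L)^T\bigr)=N(1,p)$ of $A^{(p)}(L)$ over $\Lambda_p$. Once this is in place, (1)--(4) are essentially book-keeping on Fitting ideals and principal envelopes.

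For part (1), I would recall that elementary (Fitting) ideals are insensitive to the choice of presentation and are generated by minors of any presentation matrix. Since the minors of $\pi_p(P)$ are the $\pi_p$--images of the minors of $P$, the base--change identity above gives $E_i^{(p)}(L)=\pi_p(E_i(L))\cdot \Lambda_p$ for every $i\in\N$, which is exactly~(1). Part (2) is then a one-line consequence: the defining inclusion $E_{i-1}(L)\subseteq (\Delta_i(L)(t))$ in $\Lambda$ maps, via $\pi_p$ and (1), to $E_{i-1}^{(p)}(L)\subseteq (\pi_p(\Delta_i(L)(t)))$ in $\Lambda_p$; but $\Lambda_p$ is a P.I.D., so $E_{i-1}^{(p)}(L)=(\Delta_i^{(p)}(L)(t))$, and the containment says precisely that $\pi_p(\Delta_i(L)(t))$ is a multiple of $\Delta_i^{(p)}(L)(t)$, equivalently the required divisibility.

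Part (3) refines (2) when $i=1$. Because $A(L)$ already admits the \emph{square} presentation $tS(L)-S(L)^T$, the zeroth elementary ideal is itself principal, namely $E_0(L)=(\det(tS(L)-S(L)^T))=(\Delta(L)(t))$, whence $\Delta_1(L)(t)\doteq \Delta(L)(t)$ in $\Lambda$. Applying (1) shows that $E_0^{(p)}(L)=(\pi_p(\Delta(L)(t)))$ is principal already as the image ideal, so its (unique, up to units) generator in $\Lambda_p$ equals both $\Delta_1^{(p)}(L)(t)$ and $\pi_p(\Delta(L)(t))$, giving the claimed equality. For part (4), if $f(t)\in E_i(L)$ then $\pi_p(f(t))\in \pi_p(E_i(L))=E_i^{(p)}(L)$ by (1), and $E_i^{(p)}(L)$ is principal in $\Lambda_p$ with generator dividing $\pi_p(f(t))$, giving the stated divisibility.

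I do not anticipate any real obstacle: the only non-formal content is already encapsulated in Lemma~\ref{reduce:lemma}, and the rest is standard manipulation with elementary ideals in the P.I.D.\ $\Lambda_p$. The one delicate point to state carefully is the indexing convention relating $\Delta_i$ to $E_{i-1}$, which needs to be applied consistently in (2) and (4) so that the divisibilities come out with the indices advertised in the statement.
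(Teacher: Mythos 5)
Your proposal is correct and follows essentially the same route as the paper: everything is reduced to point~(1), which in turn comes from the fact (Lemma~\ref{reduce:lemma}) that $\pi_p$ carries a presentation of $A(L)$ over $\Lambda$ to a presentation of $A^{(p)}(L)$ over $\Lambda_p$, hence carries generators of $E_i(L)$ onto generators of $E_i^{(p)}(L)$; points (2)--(4) are then the same formal manipulations with principal envelopes in the P.I.D.\ $\Lambda_p$ that the paper carries out.

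One remark on the ``delicate indexing point'' you flag in (4). With the paper's convention that $\Delta_{i+1}^{(p)}(L)(t)$ is the generator of $E_i^{(p)}(L)$, your argument shows that for $f(t)\in E_i(L)$ the polynomial $\Delta_{i+1}^{(p)}(L)(t)$ divides $\pi_p(f(t))$ --- not the literal claim with $\Delta_i^{(p)}(L)(t)$, which would require $\pi_p(f(t))\in E_{i-1}^{(p)}(L)$ and does not follow, since the inclusion $E_{i-1}^{(p)}(L)\subseteq E_i^{(p)}(L)$ goes the wrong way. This is in fact the version the paper uses later (in Lemma~\ref{k-unitary}, where $f(t)\in E_{k-1}(L)$ is shown to be divisible by $\Delta_k^{(p)}(L)(t)$), so your derivation proves the intended statement; the index printed in item~(4) of the Corollary appears to be off by one, and your closing sentence claiming ``the stated divisibility'' should be adjusted accordingly.
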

\begin{proof}
By Lemma~\ref{reduce:lemma}, $\pi_p$ maps a set of generators
(over $\Lambda$) of the ideal $E_i(L)$ 
onto a set of generators (over $\Lambda_p$) of the ideal
$E^{(p)}_i (L)$. Since $\pi_p$ is surjective, this 
readily implies point~(1).

By point~(1), the polynomial $\pi_p(\Delta_i(L)(t))$
divides every element of $E_{i-1}^{(p)}(L)$, whence point~(2).

Since $A(L)$ admits the square presentation matrix
$S(L)-tS(L)^T$, the ideal
$E_0 (L)$ is principal.
Together with~(1), this immediately gives~(3).

Point~(4) is an easy consequence of point~(1).
\end{proof}

Let us now consider the $\Lambda_p$--linear map
$\psi_z\colon \Lambda_p^\alpha\to 
\Lambda_p^\alpha$ such that $\psi_z(x)= N(z,p)\cdot x$
for every $x\in \Lambda_p^\alpha$. Of course,
$\psi_z$ induces a quotient map $\overline{\psi}_z\colon \F(p,h(t))^\alpha\to
\F(p,h(t))^\alpha$ such that $\overline{\psi}_z(\overline{x})=N(z,p,h(t))\cdot \overline{x}$
for every $\overline{x}\in \F(p,h(t))^\alpha$.
Let now $\overline{z}$ be a $\Pp$--cycle for $L$, and denote by
$z=\overline{z}(\Pp_m)$ the value assigned by $\overline{z}$
to every component of $L$. 
By Lemma~\ref{Qlemma}, the space of $\X$--colorings of $(L,\Pp_m,\overline{z})$ is in bijection
with $\F(p,h(t))\oplus \ker \overline{\psi}_z$, whence to $\F(p,h(t))\oplus
{\rm coker}\, \overline{\psi}_z$ (here we use that $\F(p,h(t))^\alpha$ is finite).

Since $\Lambda_p$ is a P.I.D.,
there exist square univalent matrices $U(1),V(1)$ with coefficients in $\Lambda_p$
such that 
$$
U(1)\cdot N(1,p)\cdot V(1)={\rm diag} \left(e^{(p)}_1(L) (t),\ldots, e^{(p)}_\alpha (L)(t)\right)\ ,
$$
where
${\rm diag} (\gamma_1,\ldots,\gamma_\alpha)$ denotes
the diagonal matrix with the $\gamma_i$'s on the diagonal.
Let $U(z)$ (resp.~$V(z)$) be the matrix obtained by applying to every coefficient
of $U(1)$ (resp.~$V(1)$) the ring endomorphism of $\Lambda_p$ 
that maps $t$ to $t^z$. 
Then we obviously have
$$
U(z)\cdot N(z,p)\cdot V(z)={\rm diag} \left(e^{(p)}_1(L) (t^z),\ldots, e^{(p)}_\alpha (L)(t^z)\right)\ .
$$ 
After reducing the coefficients modulo $h(t)$, this equality translates
into the equality
$$
\overline{U}(z)\cdot N(z,p,h(t))\cdot \overline{V}(z)={\rm diag} \left({e^{(p)}_1}(L) (t^z),\ldots, {e^{(p)}_\alpha}(L) (t^z)\right)\ ,
$$
where $\overline{U}(z),\overline{V}(z)$ are invertible over $\F(p,h(t))$,
and we denote the class of $e^{(p)}_i(L)(t)$ in $\F(p,h(t))$
simply by  ${e^{(p)}_i}(L) (t)$.
This implies that
${\rm coker}\, \overline{\psi}_z$ is isomorphic to
$$
\bigoplus_{i=1}^\alpha \F(p,h(t))\Big/\left({e_i^{(p)}}(L)(t^z)\right) \cong
\bigoplus_{i=1}^\alpha \Lambda_p\Big/\left(e_i^{(p)}(L)(t^z),h(t)\right)
\cong 
\bigoplus_{i=1}^\infty \Lambda_p\Big/\left(e_i^{(p)}(L)(t^z),h(t)\right)\ ,
$$
where the last equality is due to Lemma~\ref{reduce:lemma}.
This concludes the proof of Theorem~\ref{inoue:teo}. For later purposes we point out the
following easy:

\begin{cor}\label{a>1}
 We have
$$
a_\X(L,\Pp_m,\overline{z})>0
$$
if and only if $h(t)$ divides $\pi_p (\Delta(L)(t^z))$ in $\Lambda_p$.
\end{cor}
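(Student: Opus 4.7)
The natural plan is to read off the corollary directly from Theorem~\ref{inoue:teo}: the first summand $\F(p,h(t))$ contributes dimension $1$ to the $\F(p,h(t))$--vector space of colorings, so $d_\X(L,\Pp_m,\overline{z})-1>0$ if and only if at least one of the summands $\Lambda_p/(e_i^{(p)}(L)(t^z),h(t))$ is a non--trivial $\F(p,h(t))$--module. I would therefore start by translating that condition into a divisibility statement and then collect the $e_i^{(p)}(L)(t)$'s into a single polynomial whose divisibility by $h(t)$ is the desired criterion.

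For the first step, I would use that $\Lambda_p$ is a P.I.D.\ and that $h(t)$ is irreducible in $\Lambda_p$. For any $f(t)\in\Lambda_p$, the ideal $(f(t),h(t))$ is generated by $\gcd(f(t),h(t))$, which by irreducibility of $h(t)$ is an associate of $1$ if $h(t)\nmid f(t)$ and $f(t)\neq 0$, and otherwise is associate to $h(t)$; in the first case $\Lambda_p/(f(t),h(t))=0$, while in the remaining cases the quotient is isomorphic to $\F(p,h(t))$, hence non--trivial. Applying this with $f(t)=e_i^{(p)}(L)(t^z)$ shows that the second direct sum in Theorem~\ref{inoue:teo} is non--trivial if and only if $h(t)\mid e_i^{(p)}(L)(t^z)$ for some $i\geq 1$.

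For the second step, I would use Lemma~\ref{reduce:lemma} and the definition of the $e_i^{(p)}(L)(t)$'s: since $\Delta_i^{(p)}(L)(t)\doteq 1$ for $i$ large, telescoping the relations $e_i^{(p)}(L)(t)=\Delta_i^{(p)}(L)(t)/\Delta_{i+1}^{(p)}(L)(t)$ gives
$$
\prod_{i\geq 1} e_i^{(p)}(L)(t)\doteq \Delta_1^{(p)}(L)(t)
$$
in $\Lambda_p$, and by Corollary~\ref{princ:cor}(3) the right hand side equals $\pi_p(\Delta(L)(t))$. Substituting $t\mapsto t^z$ (which commutes with the ring map $\pi_p$) we obtain $\prod_i e_i^{(p)}(L)(t^z)\doteq \pi_p(\Delta(L)(t^z))$.

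Since $h(t)$ is irreducible, hence prime in the U.F.D.\ $\Lambda_p$, it divides at least one factor of the product if and only if it divides the product itself. Combining this with the two previous steps yields the chain of equivalences: $a_\X(L,\Pp_m,\overline{z})>0$ $\Longleftrightarrow$ $h(t)\mid e_i^{(p)}(L)(t^z)$ for some $i$ $\Longleftrightarrow$ $h(t)\mid\pi_p(\Delta(L)(t^z))$, which is the claim. The only mildly delicate point is making sure the product formula for $\Delta_1^{(p)}(L)(t)$ still makes sense when some $\Delta_i^{(p)}(L)(t)$ vanishes; but in that case $\pi_p(\Delta(L)(t^z))=0$ and $h(t)$ trivially divides it, while the corresponding summand $\Lambda_p/(0,h(t))\cong\F(p,h(t))$ is non--trivial, so the equivalence holds on the nose.
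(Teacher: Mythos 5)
Your proposal is correct and follows essentially the same route as the paper's own proof: read off from Theorem~\ref{inoue:teo} that $a_\X(L,\Pp_m,\overline{z})>0$ if and only if $h(t)$ divides some $e_i^{(p)}(L)(t^z)$, then pass to $\pi_p(\Delta(L)(t^z))$ via the product formula $\pi_p(\Delta(L)(t^z))=\Delta_1^{(p)}(L)(t^z)=\prod_i e_i^{(p)}(L)(t^z)$ and the primality of $h(t)$. The paper states these two steps without elaboration; you have simply supplied the supporting details (the P.I.D./gcd argument for non-triviality of the summands, the telescoping, and the degenerate case where some $\Delta_i^{(p)}$ vanishes), all of which are sound.
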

\begin{proof}
 Theorem~\ref{inoue:teo} implies that $a_\X (L,\Pp_m,\overline{z})>0$ if and only if
$h(t)$ divides $e_i^{(p)}(L)(t^z)$ for some $i\geq 1$. The conlcusion follows from
the fact that 
$$
\pi_p (\Delta(L)(t^z))=\Delta_1^{(p)}(L)(t^z)=\prod_{i=1}^{\infty} e_i^{(p)}(L)(t^z)\ .
$$
\end{proof}

\begin{remark}\label{inoue:rem}
  {\rm On may wonder if the equality $\pi_p (e_i(L)(t))\doteq
    e_i^{(p)}(L)(t)$ holds for every $i\geq 1$, so that in the statement of
    Theorem~\ref{inoue:teo} we could replace the
    summand $$\Lambda_p/(e_i^{(p)}(L)(t^z),h(t))$$ with the module
$$\Lambda_p/(\pi_p(e_i(L)(t^z)),h(t))\ .$$ 
Such a claim seems also suggested, at least when $L$ is a knot and
$\overline{z}=1$, by the original statement of~\cite[Theorem
1]{Inoue}.  However, this is not the case, as the following
construction shows.

In fact,
let $k_1(t)=t-1+t^{-1}$ and $k_2(t)=-2t+5-2t^{-1}$, and 
observe that $k_i(t^{-1})=k_i(t)$, $i=1,2$. It is proved 
in~\cite[Theorem 2.5]{kearton} that a knot $K$ exists such that
$A(K)$ is presented by the matrix
$$
{\rm diag}\, \left(k_1(t),k_1(t),{k}_2(t),{k}_2 (t)\right)\ .
$$
This readily implies that
$$
\begin{array}{ll}
E_0 (K)=(k_1(t)^2k_2(t)^2),\qquad & E_1(K)=(k_1(t)^2k_2(t),k_1(t)k_2(t)^2),\\
E_2(K)=(k_1(t)^2,k_2(t)^2,k_1(t)k_2(t)),\qquad & E_3(K)=(k_1(t),k_2(t))
\end{array}
$$
and $E_i(K)=\Lambda$ for every $i\geq 4$, 
whence
$$
\Delta_1 (K)(t)=k_1(t)^2k_2(t)^2,\qquad  \Delta_2(K)(t)=k_1(t)k_2(t),
$$
and $\Delta_i(K)(t)=1$ for every $i\geq 3$.
As a consequence we get 
$$
e_1(K)(t)=e_2(K)(t)=k_1(t)k_2(t),\qquad e_i(K)(t)=1\ {\rm for\ every}\ i\geq 2\ .
$$
On the other hand, let us fix $p=3$,
and observe that in this case $\pi_3 (k_1(t))=\pi_3(k_2(t))=k(t)\in\Lambda_3$,
where $k(t)=(t+1)^2$.
Therefore, from the equality 
$E^{(3)}_{i}(K)=\pi_p (E_{i}(K)(t))$ (see Corollary~\ref{princ:cor}) we easily deduce that
$$
\Delta^{(3)}_1 (K)=k(t)^4,\quad  \Delta^{(3)}_2(K)=k(t)^3,\quad
\Delta^{(3)}_3(K)=k(t)^2,\quad  \Delta_4^{(3)}(K)=k(t),
$$
and $\Delta^{(3)}_i(K)=1$ for every $i\geq 5$, 
so
$$
e^{(3)}_1(K)(t)=e^{(3)}_2(K)(t)=e^{(3)}_3(K)(t)=e^{(4)}(K)(t)=k(t), 
\qquad e^{(3)}_i(K)(t)=1\ {\rm for\ every}\ i\geq 5\ .
$$
Therefore, if $h(t)=t+1\in\Lambda_3$, then we have
$$
\bigoplus_{i=1}^\infty \Lambda_3/(e_i^{(3)}(K)(t),h(t))\cong \F(3,h(t))^4\ ,
$$
while
$$
\bigoplus_{i=1}^\infty \Lambda_3/(\pi_3(e_i(K)(t)),h(t))\cong \F(3,h(t))^2\ .
$$
}
\end{remark}

\begin{remark}\label{controes}
{\rm
Let $k_1(t),k_2(t)\in\Lambda$ and $k(t)\in\Lambda_3$ be the polynomials introduced in the previous Remark. It is proved in~\cite{levine}
that a knot $K'$ exists whose module $A(K')$ is isomorphic to $\Lambda/(k_1(t)k_2(t))$
(see also~\cite[Theorem 7.C.5]{rolfsen}). 
Let $K''=K'+K'$. Then we have $A(K'')=A(K')\oplus A(K')$
(see \emph{e.g.}~\cite[Theorem 7.E.1]{rolfsen}), and this readily implies
that 
$$
E_0(K'')=(k_1(t)^2k_2(t)^2),\qquad E_1(K'')=(k_1(t)k_2(t))\ ,
$$
and $E_i (K'')=\Lambda$ for every $i\geq 2$.
Therefore,
$$
\Delta_1(K'')(t)=k_1(t)^2k_2(t)^2,\qquad \Delta_2(K'')(t)=k_1(t)k_2(t),
 $$
 and $\Delta_i(K'')(t)=1$ for every $i\geq 3$. 
 Moreover, since the elementary ideals of $K''$ are principal, we also have
 $$
\Delta^{(3)}_1(K'')(t)=\pi_3(\Delta_1(K)(t))=k(t)^4,\qquad \Delta_2^{(3)}(K'')(t)=\pi_3(\Delta_2(K)(t))=k(t)^2,
 $$
 and $\Delta_i^{(3)}(K'')(t)=\pi_3(\Delta_i(K)(t))=1$ for every $i\geq 3$. 
 
 Therefore, the knot $K''$ and the knot $K$ introduced in the previous Remark satisfy
 the condition $\Delta_i(K)(t)=\Delta_i(K'')(t)$ for every $i\geq 1$, but have a different
 number of $\Dd_3$--colorings with respect to the cycle $\overline{z}=\overline{1}$.
What is more, since for every $p$ the $\Lambda_p$--module
$A^{(p)}(K)$ (resp.~$A^{(p)}(K'')$) admits a square presentation matrix
of order $4$ (resp.~of order $2$), Theorem~\ref{inoue:teo} readily implies
that $\Aa_\Qq(K)\leq 4$ (resp.~$\Aa_\Qq(K'')\leq 2$). Our computations
imply now that $\Aa_\Qq(K)=4$ and $\Aa_\Qq(K'')= 2$. 
Therefore, even if they
share every Alexander polynomial
$\Delta_i(K)(t)=\Delta_i(K'')(t)$, $i\geq 1$, the knots $K$, $K''$
are distinguished from each other
by the invariant $\Aa_\Qq$.
}
\end{remark}


\section
{Comparing $\Aa_\Qq$ with $\Aa$}\label{AvsA}
Let us keep notation from the preceding Section. 
Of course, since $\F(p,h(t))$ is a field, 
the quotient $\Lambda_p/(e^{(p)}_i(L)(t^z),h(t))$ 
of $\F(p,h(t))$ 
is null
(resp.~isomorphic to $\F(p,h(t))$) if and only if
$h(t)$ divides (resp.~does not divide) $e^{(p)}_i(L)(t^z)$
in $\Lambda_p$.
Therefore, if we set
$$
I(z,p,h(t),L)= \{i\in\N^*\, |\ h(t)\ {\rm divides}\ e_i^{(p)}(L)(t^z) \}\ ,
$$
$$|I(z,L)| = \sup_{p,h(t)} |I(z,p,h(t),L)| \ , $$
$$|I(L)|=\sup_z |I(z,L)|\ ,$$
then we easily obtain that
$$
\Aa_\Qq(L)= |I(L)| \ .
$$

Therefore, in order to prove Theorem~\ref{K=K} it is sufficient
to show that, if $L=K$ is a knot, then:
\begin{itemize}
\item
$I(K)\leq \Aa (K)$,
\item
$I(K)=0$ if and only if $\Aa (K)=0$.
\end{itemize}


\subsection{Reduction to the cycle $\overline{z}=1$}
We first prove that,
in order to compute $\Aa_\Qq(K)$, it is sufficient to restrict
our attention to colorings relative to the cycle $\overline{z}=\overline{1}$.

\begin{lem}\label{z=1}
We have
$$
I(L)=I(1,L)\ .
$$
\end{lem}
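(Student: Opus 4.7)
The plan is to prove the two inequalities $|I(L)|\geq |I(1,L)|$ and $|I(L)|\leq |I(1,L)|$. The first is trivial from the definition (just take $z=1$ in the sup), so all the work is in the second. The approach is a substitution trick: for each $z$ and each irreducible $h(t)\in\Lambda_p$, I construct an irreducible $h'(t)\in\Lambda_p$ of positive breadth such that
$$I(z,p,h(t),L) = I(1,p,h'(t),L),$$
and then the sup over $p,h(t),z$ collapses.

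To carry out the construction, fix $z\geq 0$, an odd prime $p$, and an irreducible $h(t)\in\Lambda_p$ of positive breadth. Using the normalization of the preliminaries, represent $h(t)$ by a polynomial in $\Z_p[t]$ with nonzero constant term, so that $\alpha:=\bar t\in\F(p,h(t))$ is a nonzero element. Set $\beta:=\alpha^z\in\F(p,h(t))$; since $\alpha$ is invertible, $\beta$ is nonzero as well (for $z=0$ we get $\beta=1$). Let $h'(t)\in\Z_p[t]$ be the minimal polynomial of $\beta$ over $\Z_p$. Then $h'(t)$ is irreducible in $\Z_p[t]$, has degree equal to its breadth $\geq 1$, and satisfies $h'(0)\neq 0$ because $\beta\neq 0$; hence $h'(t)$ is an admissible irreducible element of $\Lambda_p$.

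The key identification is then a direct chain of equivalences. The condition $h(t)\mid e_i^{(p)}(L)(t^z)$ in $\Lambda_p$ is, by passing to the quotient $\Lambda_p/(h(t))=\F(p,h(t))$, equivalent to $e_i^{(p)}(L)(\alpha^z)=0$, i.e., $e_i^{(p)}(L)(\beta)=0$ in $\F(p,h(t))$. Since $\beta$ lies in the subfield $\Z_p(\beta)\cong\Z_p[t]/(h'(t))$ and $h'(t)$ is its minimal polynomial, this vanishing is equivalent to $h'(t)\mid e_i^{(p)}(L)(t)$ in $\Z_p[t]$, hence in $\Lambda_p$. This establishes $I(z,p,h(t),L)=I(1,p,h'(t),L)$, from which $|I(z,p,h(t),L)|\leq |I(1,L)|$ follows; taking the sup over $p,h(t)$ and then over $z$ gives $|I(L)|\leq |I(1,L)|$.

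The argument is essentially book-keeping once one sees the right substitution, so I don't anticipate a serious obstacle. The only place requiring slight care is ensuring that $h'(t)$ is a legitimate element of our family $\Qq_\Ff$, i.e., irreducible in $\Lambda_p$ with positive breadth; this amounts to verifying $\beta\neq 0$, which in turn uses the standing normalization $h(0)\neq 0$ that makes $\bar t$ invertible in $\F(p,h(t))$. The case $z=0$ (which would otherwise look degenerate) is absorbed uniformly by this scheme, producing $h'(t)=t-1$, which is admissible even though the associated quandle is trivial.
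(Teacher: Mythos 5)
Your proof is correct, and it takes a genuinely different route from the paper's. The paper forms the greatest common divisor $d(t)$ of the polynomials $e_i^{(p)}(L)(t)$ with $i\in I(z,p,h(t),L)$, invokes a Bezout-type lemma (Lemma~\ref{elementary}) to show that the substitution $t\mapsto t^z$ commutes with taking G.C.D.'s in the P.I.D. $\Lambda_p$, deduces that $h(t)$ divides $d(t^z)$, hence that $d(t)$ has positive breadth, and finally takes $h'(t)$ to be any irreducible factor of $d(t)$ of positive breadth; this yields only the inclusion $I(z,p,h(t),L)\subseteq I(1,p,h'(t),L)$, which is enough. Your substitution trick --- taking $h'(t)$ to be the minimal polynomial over $\Z_p$ of $\beta=\overline{t}^{\,z}\in\F(p,h(t))$ --- is sharper: for each $i$ the condition $h(t)\mid e_i^{(p)}(L)(t^z)$ is equivalent to the vanishing of $e_i^{(p)}(L)$ at $\beta$, which is governed exactly by the minimal polynomial of $\beta$, so you get equality of index sets $I(z,p,h(t),L)=I(1,p,h'(t),L)$ rather than an inclusion. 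Both arguments hinge on the same normalization ($h(0)\neq 0$, so $\overline{t}$ and hence $\beta$ are invertible, which is what makes $h'(t)$ an admissible irreducible of positive breadth); yours additionally absorbs the degenerate cycle $z=0$ uniformly (producing $h'(t)=t-1$), a case the paper's proof tacitly sets aside by restricting to positive $z$. The only small imprecision is the phrase ``$h'(t)\mid e_i^{(p)}(L)(t)$ in $\Z_p[t]$'': the $e_i^{(p)}$'s are Laurent polynomials, so one should first clear a power of $t$ (a unit of $\Lambda_p$) before speaking of divisibility in $\Z_p[t]$; since $h'(0)\neq 0$, this is harmless and your subsequent ``hence in $\Lambda_p$'' is exactly what is needed.
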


In the proof we use the following elementary

\begin{lem}\label{elementary}
  Let $p_1(t),\ldots,p_n(t)$ be polynomials in $\Lambda_p$, and
  let $d(t)$ be their G.C.D.~in $\Lambda_p$. For every integer
  $z\geq 1$, the polynomial $d(t^z)\in\Lambda_p$ is the G.C.D.~of
  $p_1(t^z),\ldots,p_n(t^z)$ in $\Lambda_p$.
\end{lem}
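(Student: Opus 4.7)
The plan is to exploit two facts: first, that $\Lambda_p = \Z_p[t,t^{-1}]$ is a P.I.D., so G.C.D.s exist and satisfy Bezout; and second, that the substitution $t\mapsto t^z$ defines a ring endomorphism of $\Lambda_p$ (since $t^z$ is a unit), which therefore preserves all divisibility identities.

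More precisely, I would proceed as follows. Let $\varphi_z\colon\Lambda_p\to\Lambda_p$ denote the ring homomorphism determined by $\varphi_z(t)=t^z$, so that $\varphi_z(q(t))=q(t^z)$ for every $q(t)\in\Lambda_p$. Since $d(t)$ divides each $p_i(t)$ in $\Lambda_p$, say $p_i(t)=d(t)q_i(t)$, applying $\varphi_z$ yields $p_i(t^z)=d(t^z)q_i(t^z)$, which shows that $d(t^z)$ is a common divisor of $p_1(t^z),\ldots,p_n(t^z)$ in $\Lambda_p$.

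To show that it is the greatest such divisor, I would invoke Bezout: since $\Lambda_p$ is a P.I.D., the ideal $(p_1(t),\ldots,p_n(t))$ coincides with $(d(t))$, so there exist $a_1(t),\ldots,a_n(t)\in\Lambda_p$ with
\[
d(t)=\sum_{i=1}^n a_i(t)\,p_i(t)\,.
\]
Applying $\varphi_z$ to both sides gives
\[
d(t^z)=\sum_{i=1}^n a_i(t^z)\,p_i(t^z)\,,
\]
so any common divisor of $p_1(t^z),\ldots,p_n(t^z)$ in $\Lambda_p$ must divide $d(t^z)$. Combined with the previous step, this identifies $d(t^z)$ (up to units of $\Lambda_p$) as the G.C.D.~of the $p_i(t^z)$, which is exactly the claim.

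There is no real obstacle here: the only thing to keep in mind is that $\Lambda_p$ being a P.I.D.~is needed both to guarantee that a G.C.D.~exists and to produce the Bezout identity that propagates through the substitution. Note that the conclusion would fail over $\Lambda$ itself (which is only a U.F.D., not a P.I.D.), so Lemma~\ref{elementary} is genuinely a statement about the mod--$p$ reduction.
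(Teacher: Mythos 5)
Your proof is correct and is essentially identical to the paper's own argument: the substitution endomorphism $t\mapsto t^z$ preserves the divisibility $d(t)\mid p_i(t)$, and applying it to a Bezout identity (available since $\Lambda_p$ is a P.I.D.) shows every common divisor of the $p_i(t^z)$ divides $d(t^z)$. The only quibble is your closing aside: the conclusion does \emph{not} in fact fail over $\Lambda=\Z[t,t^{-1}]$ (a Gauss--lemma/content argument, reducing to Bezout over $\Q[t,t^{-1}]$, shows the analogous statement still holds in that U.F.D.), but this remark is not part of the proof and does not affect its validity.
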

\begin{proof}
  For every $i=1,\ldots,n$, the fact that $d(t)$ divides $p_i (t)$
  readily implies that $d(t^z)$ divides $p_i(t^z)$. On the other hand,
  $\Lambda_p$ is P.I.D., so Bezout's Identity implies
  that there exist $\lambda_1(t),\ldots,\lambda_n (t)\in\Lambda_p$ such
  that
$$
d(t)=\lambda_1 (t)p_1(t)+\ldots+\lambda_n(t) p_n(t)\ ,
$$
whence
$$
d(t^z)=\lambda_1 (t^z)p_1(t^z)+\ldots+\lambda_n(t^z) p_n(t^z)\ .
$$
Therefore, if $d'(t)$ divides every $p_i(t^z)$, then $d'(t)$
also divides $d(t^z)$, whence the conclusion.
\end{proof}

{\it Proof of Lemma \ref{z=1}.}
It is sufficient to show that, for every odd prime $p$, every positive
integer $z$ and every  irreducible
polynomial $h(t)\in\Lambda_p$, there exists an irreducible polynomial
$h'(t)\in\Lambda_p$ such that 
$$
| I(z,p,h(t),L)|\leq |I(1,p,h'(t),L)|\ .
$$

Let $d(t)\in\Lambda_p$ be the G.C.D.~of the polynomials $\{e_i^{(p)}(t),\ i\in I(z,p,h(t),L)\}$.
By the very definitions, $h(t)$ divides $e_i^{(p)}(t^z)$ for every
$i\in I(z,p,h(t),L)$, so by Lemma~\ref{elementary} we have that
$h(t)$ divides $d(t^z)$. This implies that
the breadth of $d(t)$ is positive, 
so 
$d(t)$ admits an irreducible factor $h'(t)$ of positive breadth.
By construction we have that $h'(t)$ divides $e_i^{(p)}(t)$ for every
$i\in I(z,p,h(t),L)$, so $I(z,p,h(t),L)\subseteq I(1,p,h'(t),L)$, whence the conclusion.
\cvd

\subsection{More details on Alexander ideals of links}
Recall that $\widetilde{X}(L)$ is the total linking number covering of the complement
of $L$, and that $k$ denotes the number of components of $L$.
If $x_0\in \compl (L)$ is any basepoint and $\widetilde{X}_0$ is the preimage of
$x_0$ in $\widetilde{X}(L)$, then the relative homology module
$A' (L)=H_1(\widetilde{X}(L),\widetilde{X}_0;\Z)$ also admits a natural structure of
$\Lambda$--module. Moreover, it is not difficult to show that
$A'(L)\cong A(L)\oplus \Lambda$ (as $\Lambda$--modules), so
$E_i(A(L))=E_{i+1} (A'(L))$ for every $i\in\N$, and $\Delta_i (L)(t)\in\Lambda$
is the generator of the smallest principal ideal containing $E_{i} (A'(L))$. If $L=K$ is a knot, this immediately
implies that $\Delta_i (L)(t)\in\Lambda$ concides with the so called
\emph{$i$--th Alexander polynomial of $K$}.

\begin{lem}\label{k-unitary}
We have
$$
\Delta^{(p)}_{i}(L)(t)\neq 0\qquad {\rm for\ every}\ i\geq k\ .
$$ 
\end{lem}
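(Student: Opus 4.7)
The plan is to exploit the square presentation matrix $N(1,p) = tS(L)^{(p)}-(S(L)^{(p)})^T$ of $A^{(p)}(L)$ provided by Lemma~\ref{reduce:lemma}. Since $N(1,p)$ has size $\alpha \times \alpha$ with $\alpha = 2g+k-1$, the ideal $E_{i-1}^{(p)}(L)$ is generated by the $(\alpha-i+1)\times(\alpha-i+1)$ minors of $N(1,p)$. Because the elementary ideals form an ascending chain $E_{i}^{(p)}(L)\subseteq E_{i+1}^{(p)}(L)$, it is enough to prove that $E_{k-1}^{(p)}(L)\neq 0$; equivalently, to exhibit a single nonzero $2g\times 2g$ minor of $N(1,p)$ in $\Lambda_p$.

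To produce such a minor, I would evaluate the polynomial matrix at $t=1$. By Corollary~\ref{seifertcor} we have $N(1)=(t-1)M+tJ$, so specializing at $t=1$ yields $N(1)|_{t=1}=J$, and therefore $N(1,p)|_{t=1}$ is just the reduction of $J$ modulo $p$. By its defining block structure, the top-left $2g\times 2g$ submatrix of $J$ decomposes as a direct sum of $g$ blocks of the form $\bigl(\begin{smallmatrix} 0 & -1\\ 1 & 0\end{smallmatrix}\bigr)$, and hence has determinant $1$. Since $p$ is odd, this value survives reduction modulo $p$, so the corresponding $2g\times 2g$ minor of $N(1,p)$ is a Laurent polynomial in $\Lambda_p$ whose evaluation at $t=1$ equals $1$; in particular it is a nonzero element of $\Lambda_p$. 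This shows $E_{k-1}^{(p)}(L)\neq 0$, and the inclusion $E_{k-1}^{(p)}(L)\subseteq E_{i-1}^{(p)}(L)$ for $i\geq k$ yields $\Delta_{i}^{(p)}(L)(t)\neq 0$ for every $i\geq k$.

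I do not expect any real obstacle: once Lemma~\ref{reduce:lemma} and Corollary~\ref{seifertcor} are on the table, the whole argument is a one-line minor computation. The only points requiring some care are bookkeeping issues, namely the off-by-one between $\Delta_i^{(p)}$ and $E_{i-1}^{(p)}$, the fact that $N(1,p)$ has size $\alpha=2g+k-1$ so that the relevant minors are of size $2g$, and the degenerate case $g=0$, where the $2g\times 2g$ minor is the empty $0\times 0$ minor which by convention equals $1$, forcing $E_{k-1}^{(p)}(L)=\Lambda_p$ directly.
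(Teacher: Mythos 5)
Your proof is correct, but it follows a genuinely different route from the one in the paper. The paper passes to the maximal abelian covering $\widehat{X}(L)$ and invokes the classical fact (Hillman) that the augmentation of $E_k(\widehat{A}(L))$ is all of $\Z$; pushing a suitable element forward along $\tau$ and then $\pi_p$ produces an $f^{(p)}(t)\in E^{(p)}_{k-1}(L)$ with $f^{(p)}(1)=1$, which $\Delta^{(p)}_k(L)(t)$ must divide. You instead stay entirely inside the single-variable picture and exhibit an explicit nonzero $2g\times 2g$ minor of the presentation matrix $N(1,p)=(t-1)M+tJ$ by evaluating at $t=1$, where the matrix collapses to $J$ and the top-left $2g\times 2g$ block is a direct sum of $g$ copies of $\bigl(\begin{smallmatrix}0&-1\\1&0\end{smallmatrix}\bigr)$ with determinant $1$. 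Morally both arguments certify nonvanishing by evaluating at $t=1$, but yours is more elementary and self-contained (it needs only Lemma~\ref{reduce:lemma} and the block structure of $J$ from Corollary~\ref{seifertcor}), whereas the paper's is softer and does not depend on the particular presentation coming from a special diagram. Your bookkeeping is right throughout: $E^{(p)}_{k-1}$ is indeed generated by the $2g\times 2g$ minors of the $\alpha\times\alpha$ matrix with $\alpha=2g+k-1$, the ascending chain of elementary ideals handles all $i\ge k$, and the $g=0$ case is correctly dispatched by the empty-minor convention. Two cosmetic remarks: the parenthetical ``since $p$ is odd'' is unnecessary, as $1\neq 0$ modulo every prime; and note that the condition ``$h\le 2g$'' in the paper's definition of $J$ is evidently a typo for ``$h\le g$'' (as the closing relations run over $i=1,\dots,g$), so your reading of the block structure is the intended one.
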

\begin{proof}
Let $\widehat{X} (L)$ be the maximal abelian covering of 
$\compl (L)$ and let $\widehat{X}^0 $ be the preimage
of $x_0$ in $\widehat{X} (L)$.
Then the homology group $\widehat{A} (L)=H_1 (\widehat{X} (L),
 \widehat{X}^0;\Z)$ admits a natural structure of
$\Z[t_1,t_1^{-1},\ldots,t_k,t_k^{-1}]$--module (see \emph{e.g.}~\cite{hill,hill2}).
Just as in the case of the total linking number covering,
one may define the $i$--th elementary ideal $\widehat{E}_i (L)\subseteq \Z[t_1,t_1^{-1},\ldots,t_k,t_k^{-1}]$ of 
this module. If $\tau\colon \Z[t_1,t_1^{-1},\ldots,t_k,t_k^{-1}]\to \Lambda$
is the ring homomorphism that sends each $t_i^{\pm 1}$ into $t^{\pm 1}$, it is not difficult
to show that 
$$
E_i(L)=E_{i+1}(A'(L))=\tau (E_{i+1} (\widehat{A}(L)))
$$
(see \emph{e.g.}~\cite[page 106]{hill2}). 

Let now $\varepsilon \colon \Z[t_1,t_1^{-1},\ldots,t_k,t_k^{-1}]\to \Z$ be the 
\emph{augmentation homomorphism} defined by $\varepsilon (f(t_1,\ldots,t_k))=f(1,\ldots,1)$.
A classical result about Alexander ideals of links (see \emph{e.g.}~\cite[Lemma 4.1]{hill2})
ensures that
$\varepsilon (E_k(\widehat{A}(L)))=\Z$.
In particular, 
there exists 
$g(t)\in E_k (\widehat{A}(L))$ such that $g(1,\ldots,1)=1$. 
Let us set $f(t)=\tau (g(t))\in 
E_{k-1}(L)$ and $f^{(p)}(t)=\pi_p (f(t))$. Our choices readily imply that $f^{(p)}(1)=1$
in $\Z_p$, so $f^{(p)}(t)\neq 0$ in $\Lambda_p$.
By Corollary~\ref{princ:cor}--(4),
the polynomial $\Delta^{(p)}_{k} (L)(t)$ divides $f^{(p)}(t)$ in $\Lambda_p$,
so $\Delta^{(p)}_i(t)$ is not null for every $i\geq k$.
\end{proof}

\begin{cor}\label{k-unitary:cor}
We have
$$
e^{(p)}_{i}(L)(t)\neq 0\qquad {\rm for\ every}\ i\geq k\ .
$$ 
\end{cor}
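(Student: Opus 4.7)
The plan is to deduce the corollary as an essentially formal consequence of Lemma~\ref{k-unitary}, using only the definition of $e^{(p)}_i(L)(t)$ and the fact that $\Lambda_p$ is an integral domain.

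First I would recall the two possible definitions of $e^{(p)}_i(L)(t)$: it is declared to be $0$ precisely when both $\Delta^{(p)}_i(L)(t)$ and $\Delta^{(p)}_{i+1}(L)(t)$ vanish, and otherwise it equals the quotient $\Delta^{(p)}_i(L)(t)/\Delta^{(p)}_{i+1}(L)(t)$ (which makes sense because, as noted right before Lemma~\ref{reduce:lemma}, $\Delta^{(p)}_{i+1}(L)(t)$ divides $\Delta^{(p)}_i(L)(t)$ whenever the latter is nonzero). So the only way $e^{(p)}_i(L)(t)$ can be zero is if $\Delta^{(p)}_i(L)(t)=0$.

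Next I would invoke Lemma~\ref{k-unitary}, which gives $\Delta^{(p)}_i(L)(t)\neq 0$ for all $i\geq k$. Applied to the index $i$ itself (and also to $i+1$, which is still $\geq k$), this rules out the defining case $\Delta^{(p)}_i(L)(t)=\Delta^{(p)}_{i+1}(L)(t)=0$, so $e^{(p)}_i(L)(t)=\Delta^{(p)}_i(L)(t)/\Delta^{(p)}_{i+1}(L)(t)$. Since $\Lambda_p=\Z_p[t,t^{-1}]$ is an integral domain and both numerator and denominator are nonzero, the quotient is a nonzero element of $\Lambda_p$, which is exactly the claim.

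There is essentially no obstacle: the corollary is a one-line consequence of Lemma~\ref{k-unitary} combined with the definition of $e^{(p)}_i$. The only point worth being slightly careful about is confirming that $\Lambda_p$ is an integral domain (which follows from $\Z_p$ being a field), so that nonzero divided by nonzero is nonzero. No further input is required.
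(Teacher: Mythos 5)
Your proof is correct and follows exactly the route the paper intends: the corollary is stated without proof as an immediate consequence of Lemma~\ref{k-unitary}, since by definition $e^{(p)}_i(L)(t)$ vanishes only when $\Delta^{(p)}_i(L)(t)$ does, and a quotient of nonzero elements in the integral domain $\Lambda_p$ is nonzero. Your added care about why the quotient form applies (both $\Delta^{(p)}_i$ and $\Delta^{(p)}_{i+1}$ are nonzero for $i\geq k$) is exactly the right check.
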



\subsection{Proof of Theorem~\ref{K=K}}
The key step for proving Theorem~\ref{K=K} is the following:

\begin{prop}\label{Alink}
 If $L$ is a $k$--component link, then
$$
I(1,p,h(t),L)\leq \frac{\br \Delta_{k}^{(p)}(L)(t)}{\br h(t)} + k -1 \ .
$$
\end{prop}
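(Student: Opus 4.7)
The plan is to split the index set $I(1,p,h(t),L)$ according to whether the index is smaller than $k$ or at least $k$, and to control the latter block by comparing breadths.

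First, I would recall that, by the very definition of the $e_i^{(p)}(L)(t)$'s, we have a telescoping identity
$$
\Delta_k^{(p)}(L)(t) \;=\; \prod_{j\geq k} e_j^{(p)}(L)(t),
$$
which is in fact a finite product since by Lemma~\ref{reduce:lemma} we have $e_j^{(p)}(L)(t)\doteq 1$ for $j$ sufficiently large, and moreover by Corollary~\ref{k-unitary:cor} every factor $e_j^{(p)}(L)(t)$ with $j\geq k$ is nonzero. Hence the breadth behaves additively on this product, giving
$$
\br \Delta_k^{(p)}(L)(t) \;=\; \sum_{j\geq k} \br e_j^{(p)}(L)(t).
$$

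Next, set $I = I(1,p,h(t),L)$ and write $I = I_1 \sqcup I_2$ where $I_1 = I \cap \{k,k+1,\ldots\}$ and $I_2 = I \cap \{1,\ldots,k-1\}$. Trivially $|I_2| \leq k-1$. For every $j\in I_1$, the polynomial $e_j^{(p)}(L)(t)$ is nonzero (by Corollary~\ref{k-unitary:cor}) and is divisible by $h(t)$, so $\br e_j^{(p)}(L)(t) \geq \br h(t)$. Combining this with the breadth identity above, and discarding the (non-negative) contributions of indices $j\geq k$ outside $I_1$, I obtain
$$
\br \Delta_k^{(p)}(L)(t) \;\geq\; \sum_{j\in I_1} \br e_j^{(p)}(L)(t) \;\geq\; |I_1|\cdot \br h(t),
$$
whence $|I_1|\leq \br \Delta_k^{(p)}(L)(t)/\br h(t)$. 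Summing the two contributions yields the claimed inequality
$$
|I| \;=\; |I_1|+|I_2| \;\leq\; \frac{\br \Delta_k^{(p)}(L)(t)}{\br h(t)} + k - 1.
$$

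The argument is essentially two-line once one has the telescoping formula and the non-vanishing statements from Corollary~\ref{k-unitary:cor}; there is no real obstacle. The only delicate point to check carefully is that the telescoping product is honestly finite and has no zero factors, so that $\br$ is additive on it — this is exactly what the finiteness statement in Lemma~\ref{reduce:lemma} together with Corollary~\ref{k-unitary:cor} provides.
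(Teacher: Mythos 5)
Your proposal is correct and follows essentially the same route as the paper's proof: the paper likewise writes $\Delta_k^{(p)}(L)(t)=\prod_{i\geq k}e_i^{(p)}(L)(t)$, notes additivity of the breadth on this nonzero product, bounds the indices $i\geq k$ in $I(1,p,h(t),L)$ (your $I_1$, the paper's $I'$) by $\br\Delta_k^{(p)}(L)(t)/\br h(t)$ via $\br e_i^{(p)}(L)(t)\geq \br h(t)$, and absorbs the at most $k-1$ remaining indices into the additive term.
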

\begin{proof}
By the very definitions we have
$$
\Delta^{(p)}_{k} (L)(t)=\prod_{i\geq k} e^{(p)}_i (L)(t),\
$$
so (since $\Delta_k^{(p)}(L)(t)\neq 0$ by Lemma~\ref{k-unitary})
$$
\br \Delta^{(p)}_{k} (t)=\sum_{i\geq k} \br e^{(p)}_i (L)(t)\ .
$$
Let us now set $I'(1,p,h(t),L)=I(1,p,h(t),L)\cap \{i\in\N\, |\, i\geq k\}$. 
In order to conclude it is sufficient to show that 
$$
|I' (1,p,h(t),L)|\leq \frac{\br \Delta^{(p)}_{k} (t)}{ \br h(t)}\ .
$$

By Corollary~\ref{k-unitary:cor},
if $i\in I'(1,p,h(t),L)$ then $e^{(p)}_i (L)(t)$ is not null and divisible by $h(t)$, so
$\br e^{(p)}_i (L)(t)\geq \br h(t)$. This readily implies that
$$
 \br \Delta^{(p)}_{k} (t)\geq \sum_{i\in I'(1,p,h(t),L)} \br e^{(p)}_i (L)(t)\geq |I' (1,p,h(t),L)|\cdot \br h(t)\ ,
$$
whence the conclusion.
\end{proof}

Let us now point out the following:

\begin{lem}\label{stimaopposta}
 If $L$ is any link, then
$$
\Aa_\Qq (L)=0\quad \Longrightarrow \quad \Aa (L)=0\ .
$$
\end{lem}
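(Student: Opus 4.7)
The plan is to apply the characterization of positivity of $a_\X$ given by Corollary~\ref{a>1} to the two distinguished cycles $\overline{z}=\overline{0}$ and $\overline{z}=\overline{1}$ in turn, and then to deduce information about $\Delta(L)(t)$ from its reductions modulo odd primes.

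For $\overline{z}=\overline{0}$, the hypothesis $\Aa_\Qq(L)=0$ gives $a_\X(L,\Pp_m,\overline{0})=0$ for every $\X=(\F(p,h(t)),\ast)\in\Qq_\Ff$. By Corollary~\ref{a>1} this means that no irreducible $h(t)\in\Lambda_p$ of positive breadth divides $\pi_p(\Delta(L)(1))$ in $\Lambda_p$. Since $\Delta(L)(1)\in\Z$, $\pi_p(\Delta(L)(1))$ is a constant in $\Lambda_p$, and the only way no such $h(t)$ divides it is that it be nonzero in $\Z_p$; letting $p$ vary this forces $\Delta(L)(1)\neq 0$ in $\Z$. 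But the Alexander polynomial of any link with at least two components vanishes at $1$, so $L=K$ must be a knot.

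For $\overline{z}=\overline{1}$ the same reasoning gives that no irreducible $h(t)\in\Lambda_p$ of positive breadth divides $\pi_p(\Delta(K)(t))$ in $\Lambda_p$. Because $\Delta(K)(1)=\pm 1$, the polynomial $\pi_p(\Delta(K)(t))\in\Lambda_p$ is nonzero. Having no irreducible factor of positive breadth, it must itself have breadth zero in $\Lambda_p$ for every odd prime $p$.

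To finish, write $\Delta(K)(t)=\sum_{i=a}^{b} c_i t^i$ with $c_a,c_b\neq 0$. If $b>a$, any odd prime $p$ exceeding $|c_a c_b|$ leaves both extremal coefficients nonzero under $\pi_p$, so $\br\pi_p(\Delta(K)(t))\geq b-a>0$, contradicting the previous step. Hence $b=a$ and $\Aa(K)=\br\Delta(K)(t)=0$. The proof is essentially routine given Corollary~\ref{a>1}; the only observation needed beyond it is the elementary fact that a Laurent polynomial with integer coefficients whose reduction modulo every odd prime has breadth zero must itself have breadth zero, so I do not see any genuine obstacle.
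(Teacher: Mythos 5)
Your proof is correct and rests on the same key fact as the paper's, namely the identity $\pi_p(\Delta(L)(t))=\Delta_1^{(p)}(L)(t)=\prod_i e_i^{(p)}(L)(t)$ (packaged here as Corollary~\ref{a>1}) combined with the choice of a prime $p$ large enough that reduction mod $p$ preserves the breadth of $\Delta(L)(t)$; the paper merely phrases the argument as the contrapositive, exhibiting an irreducible positive--breadth factor $h(t)$ of some $e_{i_0}^{(p)}(L)(t)$ and hence a quandle with $a_\X(L,\Pp_m,\overline{1})\geq 1$. Your preliminary reduction to knots via the cycle $\overline{0}$ is correct but dispensable: once $p$ exceeds the absolute values of all coefficients of $\Delta(L)(t)$ (the case $\Delta(L)(t)=0$ being trivial since $\br 0=0$ by convention), your $\overline{z}=\overline{1}$ argument applies verbatim to any link.
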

\begin{proof}
Recall from Corollary~\ref{princ:cor}
that 
$\Delta_1^{(p)}(L)(t)=\pi_p
(\Delta (L)(t))$. As a consequence,
if $\Aa(L)>0$, then
$\br \Delta^{(p)} (L)(t)> 0$ for some odd prime $p$ (just choose $p$ to be larger
than the absolute value of all the coefficients of $\Delta(L)(t)$). This implies that $\br e^{(p)}_{i_0} (t)>0$
for some $i_0\in \N$. If $h(t)$ is any irreducible factor of $e^{(p)}_{i_0} (t)$
in $\Lambda_p$, then $i_0\in I(1,p,h(t),L)$, so $\Aa_\Qq (L)\geq I(1,p,h(t),L)=1$.
\end{proof}

The following Corollary readily implies Theorem~\ref{K=K}.

\begin{cor}\label{knot:ino}
 If $L=K$ is a knot, then
$$
I(1,p,h(t),K)\leq \frac{\Aa (K)}{\br h(t)}\leq \Aa(K) ,
$$
$$
\Aa_\Qq(K)=I(K)=I(1,K)\leq \Aa(K)\ ,
$$
$$
\Aa_\Qq(K)=0\quad \Longleftrightarrow\quad \Aa(K)=0\ .
$$
\end{cor}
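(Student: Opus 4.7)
The plan is to combine the three ingredients already assembled: Proposition~\ref{Alink} specialized to the knot case, Corollary~\ref{princ:cor} relating $\Delta_1^{(p)}$ to $\pi_p(\Delta)$, and Lemmas~\ref{z=1} and~\ref{stimaopposta}. Everything should be essentially a bookkeeping exercise.

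First I would specialize Proposition~\ref{Alink} to $k=1$: for a knot $K$ and any odd prime $p$ together with any irreducible $h(t)\in\Lambda_p$ of positive breadth,
$$
I(1,p,h(t),K)\leq \frac{\br \Delta_{1}^{(p)}(K)(t)}{\br h(t)}.
$$
Then I would invoke Corollary~\ref{princ:cor}(3), which gives $\Delta_1^{(p)}(K)(t)=\pi_p(\Delta(K)(t))$. Since reducing integer coefficients modulo $p$ can only kill monomials (never create new ones), we have $\br \pi_p(\Delta(K)(t))\leq \br \Delta(K)(t)=\Aa(K)$. This yields the first displayed inequality, and since $\br h(t)\geq 1$ the factor $1/\br h(t)\leq 1$ produces the coarser bound $I(1,p,h(t),K)\leq \Aa(K)$.

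Next, I would take the supremum of the above inequality over all pairs $(p,h(t))$ to conclude $|I(1,K)|\leq \Aa(K)$. Applying Lemma~\ref{z=1} (which asserts $I(K)=I(1,K)$) together with the identity $\Aa_\Qq(K)=|I(K)|$ established just before Proposition~\ref{Alink}, this delivers the middle chain $\Aa_\Qq(K)=I(K)=I(1,K)\leq \Aa(K)$.

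Finally, for the equivalence $\Aa_\Qq(K)=0\Longleftrightarrow \Aa(K)=0$, the implication $\Aa(K)=0\Rightarrow \Aa_\Qq(K)=0$ is immediate from the inequality $\Aa_\Qq(K)\leq \Aa(K)$ just proved, while the reverse implication is exactly the content of Lemma~\ref{stimaopposta} applied to $L=K$. I do not foresee any real obstacle here, since all the genuinely nontrivial work (the presentation of the coloring module in Theorem~\ref{inoue:teo}, the reduction to $\overline{z}=1$, and the breadth estimate in Proposition~\ref{Alink}) has already been carried out upstream; the corollary is just the final assembly.
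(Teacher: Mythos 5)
Your proposal is correct and follows the paper's own argument essentially verbatim: specialize Proposition~\ref{Alink} to $k=1$, bound $\br\,\Delta_1^{(p)}(K)(t)\leq \Aa(K)$ via Corollary~\ref{princ:cor}--(3), pass to the supremum and apply Lemma~\ref{z=1}, and settle the equivalence with Lemma~\ref{stimaopposta}. Nothing to add.
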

\begin{proof}
By Corollary~\ref{princ:cor}--(3) we have $\br \Delta_1^{(p)}(K)(t)\leq
\br \Delta (K)(t)=\Aa(K)$, so
 the first inequality follows immediately from Proposition~\ref{Alink}. 
As a consequence, we have $I(1,K)\leq \Aa(K)$, so the second inequality is a consequence
of Lemma~\ref{z=1}. The fact that $\Aa_\Qq(K)=0$ if and only if $\Aa(K)=0$
easily follows from the second inequality and
Lemma~\ref{stimaopposta}.
\end{proof}

\subsection{Computing $\Aa_\Qq$ via proper subfamilies of 
$\Qq_\Ff$}
\label{subfamily}
This Subsection is devoted to determine proper subfamilies of $\Qq_\Ff$ that carry the
whole information about the invariant $\Aa_\Qq$. We will
be mainly interested in the case when
$L=K$ is a knot (some considerations
below hold more generally for $(L,\Pp_m)$). 




Let $K$ be a knot, and recall that
$\delta(K)$  has been defined in Subsection~\ref{performance2}. We begin with the following:

\begin{lem}\label{log}
 Let $f(t)\in\Z[t]$ be a polynomial and suppose that
there exist prime numbers $p_1,\ldots,p_k$ such that 
$$
f(n)=\pm p_1^{\alpha_1(n)}\cdot\ldots\cdot p_k^{\alpha_k(n)}\quad {\rm for\ every}\ n\geq n_0,
$$
where $n_0\in\N$ is fixed. Then $f(t)$ is constant. 
\end{lem}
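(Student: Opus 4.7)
The plan is to argue by contradiction: assume $f(t)$ is non-constant and derive an impossibility from the classical fact that the set of primes dividing the values of a non-constant integer polynomial cannot be finite. I would reproduce the standard proof of that fact, adapted to extract exactly the stated hypothesis.

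Concretely, assume $\deg f \geq 1$, so $|f(n)| \to \infty$ as $n \to \infty$. Choose any integer $c \geq n_0$ with $f(c) \neq 0$ (such $c$ exists, since otherwise $f$ would vanish at infinitely many points). By hypothesis $f(c) = \pm p_1^{\alpha_1} \cdots p_k^{\alpha_k}$, so setting
\[
D := p_1 p_2 \cdots p_k \cdot |f(c)|,
\]
we have $D \neq 0$. Since $f$ has integer coefficients,
\[
f(c + tD) \equiv f(c) \pmod{D} \qquad \text{for every } t \in \Z,
\]
which in particular gives $f(c) \mid f(c+tD)$ and
\[
\frac{f(c+tD)}{f(c)} \equiv 1 \pmod{p_i} \qquad \text{for every } i=1,\ldots,k.
\]

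Next I would exploit the hypothesis for large $t$. Choose $t$ so large that $c + tD \geq n_0$; then $f(c+tD) = \pm p_1^{\beta_1}\cdots p_k^{\beta_k}$ and, since $f(c)$ is also a signed product of these primes, the integer quotient $q_t := f(c+tD)/f(c)$ is of the form $\pm p_1^{\gamma_1}\cdots p_k^{\gamma_k}$ with $\gamma_i \geq 0$. The congruence $q_t \equiv 1 \pmod{p_i}$ forces $\gamma_i = 0$ for every $i$ (otherwise $p_i \mid q_t$ would contradict $q_t \equiv 1 \pmod{p_i}$). Hence $q_t = \pm 1$, i.e.\ $f(c+tD) = \pm f(c)$ for all sufficiently large $t$.

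This contradicts $|f(c+tD)| \to \infty$ as $t \to \infty$, which holds because $f$ is non-constant and $D \neq 0$. Therefore $f$ must be constant. The only step that requires a bit of care is the choice of $c$ and $D$ so that one simultaneously obtains (i) $f(c) \mid f(c+tD)$, (ii) the quotient $\equiv 1$ modulo each $p_i$, and (iii) that $c + tD$ eventually exceeds $n_0$; once these three conditions are in place, the rest is immediate from the hypothesis and the elementary observation that a non-zero integer congruent to $1$ modulo a prime cannot be divisible by that prime.
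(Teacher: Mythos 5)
Your proof is correct, but it follows a genuinely different route from the one in the paper. You run the classical argument showing that the set of primes dividing the values of a non-constant integer polynomial is infinite: from $f(c+tD)\equiv f(c) \pmod{D}$ with $D=p_1\cdots p_k\,|f(c)|$ you deduce that the integer quotient $f(c+tD)/f(c)$ is $\equiv 1$ modulo each $p_i$, hence (being a signed product of the $p_i$'s) equal to $\pm 1$, contradicting $|f(c+tD)|\to\infty$. All the steps check out, including the divisibility $f(c)\mid f(c+tD)$ (which uses $f(c)\mid D$) and the non-negativity of the exponents $\gamma_i$ (forced by unique factorization once the quotient is an integer). The paper instead argues by counting: since $|f(n)|\leq hn^d$, each exponent satisfies $\alpha_i(n)\leq w\ln n$, so on a long interval $[n_0,n_1]$ the pigeonhole principle produces $2(d+1)$ integers sharing the same exponent vector, hence $d+1$ integers on which $f$ takes the same value, forcing $f$ to be constant. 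Your argument is more elementary in that it avoids the logarithmic growth estimate and the pigeonhole count, and it isolates the arithmetic mechanism (congruences of polynomial values) rather than the analytic one (too few admissible values for too many inputs); the paper's argument has the mild advantage of being entirely self-contained and quantitative, producing an explicit bound on how far one must look to find repeated values. Either proof is acceptable here.
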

\begin{proof}
Let $d=\deg f(t)$, and take $h>0$ such that $|f(n)|\leq hn^d$. Then
$\alpha_1(n)\ln p_1+\ldots+\alpha_k(n)\ln p_k\leq  d\ln n +\ln h$ for every $n\geq n_0$. 
In particular,
there exist a constant $w\geq 1$ such that $\alpha_i(n)\leq w\ln n$ for every $n\geq n_0$.
Therefore, if $n_1$ is such that $(n_1-n_0)>2(d+1)(w\ln n_1)^k$, then the interval
$[n_0,n_1]$ contains 
(at least) $2(d+1)$ integers $m_1,\ldots,m_{2(d+1)}$ such that
$\alpha_i(m_j)=\alpha_i (m_{j'})$ for every $i=1,\ldots,k$,
$j,j'=1,\ldots,2(d+1)$, whence  $f(m_i)=\pm f(m_{j'})$ for every $j,j'=1,\ldots,2(d+1)$.
It follows that $f$ takes the same value on at least $d+1$ distinct
integers.
Since
$\deg f=d$, this implies in turn that $f$ is constant. 
\end{proof}

We now prove Proposition \ref{min}, which we recall here for the convenience of the reader.

\begin{prop}\label{subfam}
Let $K$ be a knot.
\begin{enumerate}
\item If $\theta (K)>1$, then 
$\theta(K)\geq \delta(K)+1$.
\item If  $\Aa_\Qq(K)=1$, then $ \delta(K)= 1$.
\item If $\Aa(K)>0$, then $$\delta(K)\leq \frac{\Aa(K)}{\max \{2,\Aa_\Qq(K)\} }\ .$$
\item
Suppose that $\Aa_\Qq(K)=\Aa(K)$ or $\Aa_\Qq(K)=\Aa(K)-1$. Then $\delta(K)=1$.
Moreover, there
  exist an odd prime $p$ and an element
$a\in\Z_p^\ast$ such that $(t-a)^{\Aa_\Qq(K)}$ divides $\Delta_1^{(p)}(K)(t)$ in
  $\Lambda_p$.
\item 
If $\Aa_\Qq(K)=\Aa(K)$, then $\delta(K)=1$ and
there exist an odd prime $p$ and an element $a\in\Z_p^\ast$ such that
$\Delta_1^{(p)}(K)(t)\doteq (t-a)^{\Aa(K)}$ in $\Lambda_p$.
\end{enumerate}
\end{prop}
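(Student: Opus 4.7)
The plan is to handle the five parts in order, using the bound from Corollary~\ref{knot:ino} together with a density statement extracted from Lemma~\ref{log}. A preliminary observation will be convenient: by the very definition of $\delta(K)$ as a minimum, any $\X \in \Qq_\Ff(\delta(K))$ realizing $\Aa_\Qq(K) = a_\X(K,1)$ must correspond to an irreducible polynomial $h(t)\in\Lambda_p$ of breadth exactly $\delta(K)$ (otherwise $\X$ would already lie in $\Qq_\Ff(\delta(K)-1)$, contradicting minimality). Granting this, Part~(1) is immediate from Lemma~\ref{type}(4): if $\theta(K) > 1$ the minimizing quandle is non--trivial, so $\theta(K) = t_\X \geq \br h(t) + 1 = \delta(K)+1$.

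The main work is in Part~(2), which I expect to be the principal difficulty: one must exhibit a concrete $\X \in \Qq_\Ff(1)$ achieving $a_\X(K,1) \geq 1$, the upper bound coming for free from $\Aa_\Qq(K)=1$. By Corollary~\ref{knot:ino} we have $\Aa(K)>0$, hence $\Delta(K)(t)$ admits a non--constant irreducible factor $f(t)\in\Z[t]$. Since a prime $p$ divides $f(n)$ exactly when $n \bmod p$ is a root of $\pi_p(f)$ in $\F_p$, Lemma~\ref{log} applied to $f$ yields infinitely many odd primes $p$ for which $\pi_p(f)$ admits a root $a\in\F_p$. Since $f(1)$ divides $\Delta(K)(1)=\pm 1$, no such root equals $1$ modulo any odd prime; and since $f(0)\neq 0$, for all but finitely many admissible $p$ this root lies in $\Z_p^\ast$. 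For such a pair $(p,a)$, the polynomial $h(t)=t-a$ is irreducible in $\Lambda_p$ of breadth one and divides $\pi_p(\Delta(K)(t))=\Delta_1^{(p)}(K)(t)=\prod_i e_i^{(p)}(K)(t)$, whence it divides some $e_i^{(p)}(K)(t)$ by unique factorization in $\Lambda_p$. The corresponding quandle $(\F(p,h(t)),\ast)\in\Qq_\Ff(1)$ is non--trivial (as $a \neq 1$), and gives $a_\X(K,1)=|I(1,p,t-a,K)|\geq 1$, forcing $\delta(K)=1$.

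For Part~(3), the preliminary observation combined with Corollary~\ref{knot:ino} gives $\delta(K)\cdot \Aa_\Qq(K) = \br h(t) \cdot |I(1,p,h(t),K)| \leq \Aa(K)$, which settles the case $\Aa_\Qq(K)\geq 2$. When $\Aa_\Qq(K)=1$, Part~(2) yields $\delta(K)=1$, and the required inequality $1\leq \Aa(K)/2$ follows from the classical symmetry $\Delta(K)(t)\doteq \Delta(K)(t^{-1})$ together with $\Delta(K)(1)=\pm 1$, which force $\Aa(K)$ to be even and hence at least $2$; the case $\Aa_\Qq(K)=0$ is excluded by the hypothesis $\Aa(K)>0$ via Corollary~\ref{knot:ino}. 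Under the hypothesis of Part~(4), the same arguments (together with the trivial subcase $\Aa(K)=0$, in which the trivial quandle lies in $\Qq_\Ff(1)$) give $\delta(K)=1$; the construction of Part~(2) then produces $p$ and $a\in\Z_p^\ast$ such that the corresponding quandle realizes $\Aa_\Qq(K)=|I(1,p,t-a,K)|$, so $t-a$ divides exactly $\Aa_\Qq(K)$ of the factors $e_i^{(p)}(K)(t)$, whence $(t-a)^{\Aa_\Qq(K)}$ divides $\Delta_1^{(p)}(K)(t)=\prod_i e_i^{(p)}(K)(t)$. Finally, Part~(5) is a breadth comparison: Corollary~\ref{princ:cor}(3) gives $\br \Delta_1^{(p)}(K)(t)\leq \br \Delta(K)(t)=\Aa(K)$, while $(t-a)^{\Aa(K)}$ has breadth $\Aa(K)$; so in any factorization $\Delta_1^{(p)}(K)(t)=(t-a)^{\Aa(K)}\,q(t)$ with $q(t)\in\Lambda_p$ one must have $\br q(t)=0$, forcing $q(t)$ to be a monomial and hence a unit in $\Lambda_p$, giving $\Delta_1^{(p)}(K)(t)\doteq (t-a)^{\Aa(K)}$.
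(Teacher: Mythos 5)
Your argument is correct and follows essentially the same route as the paper's proof: Lemma~\ref{type}--(4) for part~(1), Lemma~\ref{log} to manufacture a breadth--one divisor of $\Delta_1^{(p)}(K)(t)$ for part~(2), Corollary~\ref{knot:ino} together with the evenness of $\Aa(K)$ for part~(3), and breadth counting for parts~(4) and~(5). The one inaccuracy is in part~(4): the Lemma~\ref{log} construction of part~(2) only produces a pair $(p,a)$ with $|I(1,p,t-a,K)|\geq 1$, so when $\Aa_\Qq(K)\geq 2$ it does \emph{not} by itself yield a quandle realizing the full value $\Aa_\Qq(K)$. The pair you actually need comes from your own preliminary observation applied to $\delta(K)=1$: some $\X=(\F(p,h(t)),\ast)\in\Qq_\Ff(1)$ attains $a_\X(K,1)=\Aa_\Qq(K)$ with $\br h(t)=1$, and any irreducible $h(t)\in\Lambda_p$ of breadth one satisfies $h(t)\doteq t-a$ for some $a\in\Z_p^\ast$. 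With that substitution the remainder of your parts~(4) and~(5) goes through verbatim, exactly as in the paper.
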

\begin{proof}  
(1) Suppose that $\Aa_\Qq(K)=|I(1,p,h(t),K)|$, where $\br
h(t)=\delta(K)$. Since $\theta (K)>1$, the quandle $\X=(\F(p,h(t)),*)$ cannot be trivial, so 
Lemma~\ref{type} implies that
$t_\X \geq \delta(K)+1$.

(2) By Theorem~\ref{K=K} we have $\Aa(K)>0$, whence $\br \Delta(K)(t)>0$. 
Let $f(t)\in\Z[t]\subseteq \Lambda$ be such that $f(t)\doteq \Delta(K)(t)$ and
$f(0)\neq 0$, so that $\deg f(t)=\br \Delta(K)(t)=d>0$.
By Lemma~\ref{log}, there exists $n> |f(0)|$ such that $f(n)$  
is divided by a prime number $p> |f(0)|$. Let $a$ be the class
of $n$ in $\Z_p$, and let us set $h(t)=t-a\in\Z_p[t]\subset \Lambda_p$.
Since $p$ divides $f(n)$, we have that $h(t)$ divides $\Delta^{(p)}_1(L)(t)=\pi_p (\Delta(L)(t))$
in $\Lambda_p$. Also observe that $p$ does not divide
$f(0)$, so $p$ does not divide $f(n)-f(0)$, and this readily implies
that $a\neq 0$ in $\Z_p$.  It follows that $h(t)$ is irreducible of positive
breadth in $\Lambda_p$, so we may set $\X=(\F(p,h(t)),\ast)$.
By construction we have $a_\X(K,1)\geq 1=\Aa_\Qq(K)$, so
$\delta(K)=1$.

(3) It is well--known that $\Aa (K)=\br \Delta(K)(t)$ is even, so $\Aa(K)>0$ 
implies that $\Aa(K)\geq 2$. Together with
the inequality $\Aa_\Qq(K)\leq \Aa(K)$, this implies that
$\Aa(K)/\max\{2,\Aa_\Qq(K)\}\geq 1$, so 
we may suppose $\delta(K)>1$, whence $\Aa_\Qq (K)\geq 2$ (see point~(2)).

Suppose now that $\Aa_\Qq (K)=I(1,p,h(t),K)$, where $\delta(K)=\br h(t)$.
Corollary~\ref{knot:ino} implies that $\Aa_\Qq (K)=I(1,p,h(t),K)\leq \Aa(K)/\delta(K)$,
whence the conclusion.

(4) The case $\Aa(K)=0$ is trivial, so the first statement is an immediate consequence of (3). Then, we may choose
$h(t)=(t-a)\in\Lambda_p$, $a\in\Z_p^\ast$, in such a way that
$\Aa_\Qq (K)=|I(1,p,h(t),K)|$. Now
$h(t)$ divides $e^{(p)}_i (K)(t)$ for every $i\in I(1,p,h(t),K)$, so
$h(t)^{\Aa_\Qq(K)}$ divides $\prod_{i\in I(1,p,h(t),K)} e^{(p)}_i (K)(t)$, which
divides in turn $\Delta_1^{(p)}(K)(t)$.

(5) is an immediate consequence of (4).
\end{proof}

\begin{remark}\label{delta:unbounded}
  {\rm As mentioned in Question~\ref{delta:quest}, we are not able to
    prove that $\delta (K)$ may be arbitrarily large. Let us point out
    some difficulties that one has to face in order to prove (or
    disprove) such a statement. If one tries to construct a knot $K$
    with $\delta(K)\geq n$, one has to find $K$ and
    $\X=(\F(p,h(t),\ast)$ such that
    $\Aa_\Qq (K)=a_\X(K,1)=|I(1,p,h(t),K)|$ and $\br h(t)=n$.  
    Once this has been established, the inequality $\delta (K)\leq n$
    is proved.  In order to show that $\delta(K)=n$ we are left to
    prove that for every odd prime $q$ the number of polynomials
    $e^{(q)}_i (K)(t)$, $i\in\N$, admitting a common factor of breadth
    at most $n-1$ is strictly less than $\Aa_\Qq (K)$.  One may
    probably start with a knot $K$ whose Alexander polynomial $\Delta
    (K)(t)$ decomposes as the product of irreducible factors of large
    breadth.  This would ensure that also the $e_i(K)(t)$'s have large
    breadth. However, it is not clear how to control the breadth (and
    the existence of common divisors) of the $e_i^{(q)}(K)(t)$'s, when
    $q$ is a generic prime, even under the hypothesis that
    $I(1,p,h(t),K)$ collects a maximal subset of indices such that the
    corresponding $e_i^{(p)}(K)(t)$'s have a non--trivial common
    divisor (and such a divisor has breadth $n$).
    }
\end{remark}

\section{Genus--1 knots}\label{g=1}  
In this Section we fully describe the case of knots that admit a
Seifert surface of genus 1 (that is, knots of genus 1 or the unknot).
\smallskip

Let us fix a quandle $\X=(\F(p,h(t)),*)$, take $z\in\N$ and consider a
special diagram $(\Dd(T),z)$ of $(K,z)$ (see Section~\ref{ribbon}).




By Lemma~\ref{Qlemma}, the integer $a_\X(K,z)$ is equal to the dimension
of $\ker N(z,p,h(t))$, where 
$$
N(z,p,h(t))=\left(\begin{array}{cc}
  (\overline{t}^z-1)M_{1,1} \qquad & \overline{t}^z(M_{1,2}-1)-M_{1,2}              \\    
   \overline{t}^z(M_{2,1}+1) -M_{2,1}\qquad & (\overline{t}^z-1)M_{2,2}               \end{array}
\right) \ 
$$
(recall that $\overline{t}$ denotes the class of $t$ in $\F(p,h(t))$).

Recall that the \emph{determinant} $\det K$ of $K$ is defined as $\det
K=|\Delta(K)(-1)|=|\det(S(L)+S(L)^T)|$, where $S(L)$ is a Seifert
matrix for $K$.

\begin{lem}\label{AB}
 We have
$$
M_{1,2}-M_{2,1}=1,\qquad \det K=|4\det M -1|\ .
$$
\end{lem}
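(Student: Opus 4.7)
The plan is to deduce both equalities by specializing the general structural results of Section~\ref{specialdiag} to the case $\alpha=2g+k-1=2$.

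First, since $K$ is a knot of genus (at most) $1$, we have $\alpha=2$ and the antisymmetric matrix $J$ of Section~\ref{specialdiag} reduces to the $2\times 2$ matrix with $J_{1,2}=-1$ and $J_{2,1}=1$. Applying Lemma~\ref{B==1}, i.e.~$M-M^T=-J$, to the entry $(1,2)$ immediately yields $M_{1,2}-M_{2,1}=-J_{1,2}=1$. This takes care of the first equality.

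For the second equality, I would start from the classical identity $\det K=|\Delta(K)(-1)|=|\det(S(K)+S(K)^T)|$, where $S(K)$ is the Seifert matrix encoded by our special diagram. By Corollary~\ref{seifertcor} we have $S(K)=M+J$, and since $J$ is antisymmetric this gives
\[
S(K)+S(K)^T=(M+J)+(M^T+J^T)=M+M^T.
\]
Thus it suffices to compute $\det(M+M^T)$ directly: writing $s=M_{1,2}+M_{2,1}$ and using the first part of the lemma to express $M_{1,2}=M_{2,1}+1$, one gets $s^2=(2M_{2,1}+1)^2=4M_{2,1}^2+4M_{2,1}+1=4M_{1,2}M_{2,1}+1$. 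Hence
\[
\det(M+M^T)=4M_{1,1}M_{2,2}-s^2=4(M_{1,1}M_{2,2}-M_{1,2}M_{2,1})-1=4\det M-1,
\]
and taking absolute values gives $\det K=|4\det M-1|$.

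No real obstacle arises: both claims are short algebraic consequences of the already-established structural results (Lemma~\ref{B==1} and Corollary~\ref{seifertcor}) once one writes out $M$ and $J$ explicitly in the $2\times 2$ case. The only point to be slightly careful about is the sign in Lemma~\ref{B==1} (i.e.~that $J_{1,2}=-1$ rather than $+1$) and the fact that $J+J^T=0$, which is what makes the antidiagonal contributions of $J$ disappear from the symmetrization $S(K)+S(K)^T$.
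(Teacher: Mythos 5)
Your proof is correct and follows essentially the same route as the paper: the first equality is read off from Lemma~\ref{B==1}, and the second from the identity $\det K=|\det(S(K)+S(K)^T)|$ combined with Corollary~\ref{seifertcor} (the paper writes $S(K)+S(K)^T=2M+J$, which equals your $M+M^T$ since $M^T=M+J$, and expands the $2\times 2$ determinant the same way). The sign bookkeeping for $J$ is handled correctly, so there is nothing to add.
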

 \begin{proof}
   The first equality is an immediate consequence of
   Lemma~\ref{B==1}. Putting together Lemmas~\ref{seifert}
   and~\ref{B==1} we also obtain $S(L)+S(L)^T=2M+J$, whence the
   conclusion.
 \end{proof}

\begin{prop}\label{sharp-2g} Let $K$ be a knot such that $g(K)=1$, let 
  us take $z\in\N$ and a quandle
  $\X=(\F(p,h(t)),*)\in \Qq_\Ff$.  Then $$a_\X(K,z)=2$$ if and only if
  for (one, and hence for) every special diagram $(\Dd(T),z)$ of
  $(K,z)$ the following conditions hold:
$$ 
M_{1,1}=M_{2,2}=0,\quad
M_{1,2}=\frac{p+1}{2}, \quad M_{2,1}=\frac{p-1}{2} \qquad {\rm in}\ \Z_p
$$
and
$$  
h(t)\, \big|\,  \left(1+t^z\right) \quad
{\rm in}\ \Lambda_p  \ .
$$
\end{prop}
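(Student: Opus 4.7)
The plan is to reduce the statement to a concrete matrix equation, analyze the four scalar conditions that it imposes, and then solve them using the algebraic relation between the entries of $M$ provided by Lemma~\ref{AB}.

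First I would fix a special diagram $(\Dd(T),z)$ of $(K,z)$. Since $k=1$ and $g(K)=1$, the primary tangle $T$ has $\alpha = 2g+k-1=2$ strings, so $N(z,p,h(t))$ is the $2\times 2$ matrix displayed in the excerpt. By Lemma~\ref{Qlemma} we have $a_\X(K,z)=\dim_{\F(p,h(t))}\ker N(z,p,h(t))$, and since this is a $2\times 2$ matrix over the field $\F(p,h(t))$, the condition $a_\X(K,z)=2$ is equivalent to requiring that \emph{every} entry of $N(z,p,h(t))$ vanishes in $\F(p,h(t))$, i.e.~is divisible by $h(t)$ in $\Lambda_p$.

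Next I would analyse the four entrywise conditions. The diagonal entries $(\bar t^z-1)M_{i,i}$ vanish in $\F(p,h(t))$ iff either $\bar t^z=1$ or $M_{i,i}\equiv 0 \pmod p$ (here I use that $M_{i,i}$ is a constant integer and $\br h(t)\geq 1$, so $h(t)$ divides the constant $M_{i,i}$ in $\Lambda_p$ only if $p\mid M_{i,i}$). The case $\bar t^z=1$ can be ruled out at once: substituting $\bar t^z=1$ in the off-diagonal entry $\bar t^z(M_{1,2}-1)-M_{1,2}$ yields $-1$, which is nonzero in the odd characteristic field $\F(p,h(t))$. Hence $M_{1,1}\equiv M_{2,2}\equiv 0\pmod p$.

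It remains to treat the off-diagonal conditions
\[
\bar t^z(M_{1,2}-1)=M_{1,2},\qquad \bar t^z(M_{2,1}+1)=M_{2,1}\ \text{in}\ \F(p,h(t)).
\]
Using Lemma~\ref{AB} to write $M_{2,1}=M_{1,2}-1$, both equations become $(\bar t^z-1)M_{1,2}=\bar t^z$ and $(\bar t^z-1)M_{1,2}=-1$ respectively. Comparing the two right-hand sides gives $\bar t^z=-1$, which is exactly the divisibility $h(t)\mid 1+t^z$ in $\Lambda_p$. Plugging $\bar t^z=-1$ back in yields $M_{1,2}=\frac{1}{2}$ in $\F(p,h(t))$; since $M_{1,2}$ is an integer, this is equivalent to $M_{1,2}\equiv \tfrac{p+1}{2}\pmod p$, and then $M_{2,1}\equiv \tfrac{p-1}{2}\pmod p$. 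Conversely, if all the listed conditions hold then a direct substitution shows that every entry of $N(z,p,h(t))$ vanishes.

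Finally, the parenthetical ``for one, and hence for every'' special diagram is automatic: by Proposition~\ref{ishii:gen} the integer $a_\X(K,z)$ depends only on $(K,z)$ and not on the chosen diagram, so the listed system of conditions -- being equivalent to the diagram-independent equality $a_\X(K,z)=2$ -- is itself diagram-independent. No real obstacle is expected; the only delicate point is remembering that vanishing of an integer constant modulo the ideal $(h(t))\subseteq \Lambda_p$ forces vanishing modulo $p$, which is what makes the ``mod $p$'' form of the conclusion correct.
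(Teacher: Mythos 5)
Your proof is correct and follows essentially the same route as the paper's: reduce $a_\X(K,z)=2$ to the vanishing of all four entries of the $2\times 2$ matrix $N(z,p,h(t))$, extract $M_{1,1}=M_{2,2}=0$ from the diagonal, and combine the two off-diagonal equations via Lemma~\ref{AB} to force $\overline{t}^z=-1$ and $M_{1,2}=\tfrac{p+1}{2}$, $M_{2,1}=\tfrac{p-1}{2}$. The only cosmetic difference is that you exclude the degenerate case $\overline{t}^z=1$ directly from the off-diagonal entry, whereas the paper first disposes of the trivial-quandle and $z=0$ cases using Lemma~\ref{type}; both are fine.
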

\begin{proof}
  If $\X$ is trivial, then for every $z$ we have $a_\X
  (K,z)=0$. Moreover, $h(t)=t-1$ does not divide $1+t^z$, so we may
  suppose that $\X$ is non--trivial. Suppose now that $z=0$ in
  $\Z_{t_\X}$. Then we have $a_\X (K,z)=0$, and $h(t)$ has to divide
  $1-t^z$ by Lemma~\ref{type}--(2). As a consequence, $h(t)$ cannot
  divide $1+t^z$, so the conclusion holds also in this case. We may
  therefore assume that $z\neq 0$ in $\Z_{t_\X}$.

  Observe that $a_\X(K,z)=2$ if and only if $N(z,p,h(t))=0$,
  \emph{i.e.}~if and only if
$$ 
\begin{array}{cc}
(\overline{t}^z-1)M_{1,1}=0 \qquad & (\overline{t}^z-1)M_{2,2}=0,\\ 
\overline{t}^z(M_{1,2}-1)-M_{1,2}=0, \qquad &                
   \overline{t}^z(M_{2,1}+1) -M_{2,1}=0\ .
\end{array}
$$
Since $z\neq 0$ in $\Z_{t_\X}$, we have $\overline{t}^z-1\neq 0$, so
the first and the second relations give $M_{1,1}=M_{2,2}=0$.  By
Lemma~\ref{AB}, the third equation can be rewritten as $M_{2,1}
\overline{t}^z=M_{2,1}+1$. Together with the fourth equation, this
immediately implies that $2M_{2,1}=-1$, whence $M_{2,1}=(p-1)/2$, and
$M_{1,2}=(p+1)/2$ again by Lemma~\ref{AB}. Under these conditions, the
third and the fourth equations are equivalent to the fact
$\overline{t}^z+1=0$, \emph{i.e.}~to the fact that $h(t)$ divides
$t^z+1$ in $\Lambda_p$.
\end{proof}

Proposition~\ref{sharp-2g} readily implies the following:

\begin{cor}\label{sharp-2g-2} 
  With notations as in Proposition~\ref{sharp-2g}, we have
$$\Aa_\Qq(K)=2$$ if
and only if there exist a special diagram $\Dd(T)$ of $K$ and a prime
number $p\geq 3$ such that the following equalities hold in $\Z_p$:
$$ 
M_{1,1}=M_{2,2}=0,\quad
M_{1,2}=\frac{p+1}{2}, \qquad M_{2,1}=\frac{p-1}{2}\ .
$$
Moreover, in this case there exists a dihedral quandle $\X\in
\Qq_\Ff(1)$ such that $a_\X(K,1)=2$, so that
$$ 
\delta (K)=1,\qquad \theta(K)=2\ .
$$ 
\end{cor}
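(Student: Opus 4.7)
The corollary follows fairly directly from Proposition~\ref{sharp-2g} together with the genus bound $\Aa_\Qq(K)\leq 2g(K)=2$ provided by Corollary~\ref{lowerbound}, and the only real work is in the final statement about $\delta(K)$ and $\theta(K)$. My plan is to handle the two directions of the equivalence separately, using the dihedral quandle $\Dd_p$ as the witness that will simultaneously give the conclusion on $\delta$ and $\theta$.

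For the $(\Leftarrow)$ direction, suppose that a special diagram $\Dd(T)$ and a prime $p\geq 3$ exist for which the stated equalities hold in $\Z_p$. I would choose the dihedral quandle $\X=\Dd_p=(\F(p,1+t),\ast)$, which belongs to $\Qq_\Ff(1)$, together with the cycle $z=1$. Since $h(t)=1+t$ trivially divides $1+t^1$ in $\Lambda_p$ and the matrix conditions are exactly those required by Proposition~\ref{sharp-2g}, that proposition gives $a_\X(K,1)=2$. Combined with the genus inequality $\Aa_\Qq(K)\leq 2g(K)=2$, this yields $\Aa_\Qq(K)=2$, and it simultaneously exhibits a dihedral quandle with cycle $1$ realizing the invariant.

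For the $(\Rightarrow)$ direction, assume $\Aa_\Qq(K)=2$. By definition of $\Aa_\Qq$ there exist a quandle $\X=(\F(p,h(t)),\ast)\in\Qq_\Ff$ and a cycle $z$ with $a_\X(K,z)=2$, and Proposition~\ref{sharp-2g} applied to this choice produces precisely the four numerical equalities in $\Z_p$ on the entries of $M$ for any special diagram of $K$. This settles the equivalence.

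Finally, the claims $\delta(K)=1$ and $\theta(K)=2$: from the $(\Leftarrow)$ argument the dihedral quandle $\Dd_p\in\Qq_\Ff(1)$ with cycle $\overline{z}=1$ realizes $\Aa_\Qq(K)$, so $\delta(K)\leq 1$, forcing $\delta(K)=1$, and also $\theta(K)\leq t_{\Dd_p}=2$. To get $\theta(K)\geq 2$, I would argue that $\theta(K)=1$ is impossible: a quandle of type $1$ is the trivial quandle (equivalently $h(t)\doteq t-1$ by Lemma~\ref{type}(1)), and trivial quandles give $a_\X(K,z)=0$ for every knot, contradicting $\Aa_\Qq(K)=2$. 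Alternatively (and more cleanly) one can invoke Proposition~\ref{min}(1), according to which $\theta(K)>1$ implies $\theta(K)\geq \delta(K)+1=2$. The main subtlety is just to verify that the non-triviality condition rules out $\theta(K)=1$, which is essentially immediate.
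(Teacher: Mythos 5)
Your proof is correct and follows the same route the paper intends: the paper gives no written argument beyond ``Proposition~\ref{sharp-2g} readily implies the following,'' and your unpacking --- using $\Dd_p=(\F(p,1+t),\ast)$ with $z=1$ as the witness (noting $1+t$ divides $1+t$), the genus bound $\Aa_\Qq(K)\leq 2g(K)=2$ for the converse, and triviality of type--$1$ quandles to rule out $\theta(K)=1$ --- is exactly the intended deduction. No gaps.
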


\begin{remark}\label{easy} {\rm Notice that if a special diagram
    $(\Dd(T),z)$ verifies the conditions of Corollary~\ref{sharp-2g-2}
	that involve $M_{1,2}$
    and $M_{2,1}$, then we can easily realize also the 
    conditions on $M_{1,1}$ and $M_{2,2}$
via suitable Reidemeister moves of the first
    type (\emph{i.e.}~by ``adding kinks'') on the two strings of $T$.}
\end{remark}

Let us now
discuss the conditions under which $a_\X (K,z)=0$ for every
$\X\in\Qq_\Ff$, $z\in\mathbb{Z}$. 
We begin with the following:

\begin{lem}\label{g1alex}
 We have
$$
\Delta(K)(t)\doteq \det M +(1-2\det M)t +(\det M) t^{2}\ .
$$
\end{lem}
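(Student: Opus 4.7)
The plan is to compute $\Delta(K)(t) = \det(S(K) - tS(K)^T)$ directly, using the relation $S(K) = M + J$ furnished by Corollary~\ref{seifertcor}. Since $g(K)=1$ (so that $\alpha = 2g+k-1 = 2$), $M$ is a $2\times 2$ integer matrix and $J$ is the standard antisymmetric $2\times 2$ matrix with $J_{1,2}=-1$, $J_{2,1}=1$. Because $J^T = -J$, we can rewrite
$$S(K) - tS(K)^T \;=\; M - tM^T + (1+t)J \ .$$

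Writing $M = \bigl(\begin{smallmatrix} a & b \\ c & d \end{smallmatrix}\bigr)$ and invoking Lemma~\ref{AB} (which yields $b - c = 1$, i.e.~$c = b-1$), a short computation shows that the $(1,2)$ entry of $M - tM^T + (1+t)J$ equals $b(1-t) - 1$, while its $(2,1)$ entry equals $b(1-t) + t$. Setting $u := 1-t$, the matrix takes the form
$$\begin{pmatrix} au & bu - 1 \\ bu + t & du \end{pmatrix}\ ,$$
whose determinant is
$$adu^2 - (bu-1)(bu+t) \;=\; (ad - b^2)u^2 + bu(1-t) + t \;=\; (ad - b^2 + b)u^2 + t\ .$$
Since $\det M = ad - bc = ad - b(b-1) = ad - b^2 + b$, this simplifies to $(\det M)(1-t)^2 + t$, which upon expansion gives exactly $\det M + (1 - 2\det M)\, t + (\det M)\, t^2$.

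There is no real obstacle here: once Corollary~\ref{seifertcor} and Lemma~\ref{AB} are available, everything reduces to a two-by-two determinant computation, with the only subtlety being the careful bookkeeping in substituting $c=b-1$ to collapse the answer to a function of $\det M$ alone. As a sanity check, evaluating at $t=-1$ gives $4\det M - 1$, which agrees (up to sign) with the formula $\det K = |4\det M - 1|$ of Lemma~\ref{AB}.
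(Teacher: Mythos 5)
Your computation is correct and follows essentially the same route as the paper: both proofs reduce to a $2\times 2$ determinant built from Corollary~\ref{seifertcor} together with the relation $M_{1,2}-M_{2,1}=1$, the only cosmetic difference being that you expand $\det(S-tS^T)$ with $S=M+J$ while the paper expands $\det\left((t-1)M+tJ\right)=\det(tS-S^T)$, which agrees with yours up to transposition and hence up to $\doteq$. The sanity check at $t=-1$ against Lemma~\ref{AB} is a nice touch.
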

\begin{proof}
Corollary~\ref{seifertcor} implies that 
$$\Delta(K)(t)=\det \left((t-1)M+tJ\right)=(t-1)^2M_{1,1}M_{2,2}-((t-1)M_{1,2}-t)
((t-1)M_{2,1}+t)\ .$$
Since $M_{1,2}-M_{2,1}=1$, the conclusion follows.
\end{proof}


\begin{cor}\label{W(K)}
 The integer
$$
W(K)=\det M
$$
is a well--defined invariant of $K$, \emph{i.e.}~it does not depend
on the special diagram of $K$ defining $M$. Moreover,
$\Delta(K)(t)\doteq \Delta(K')(t)$ if and only if $W(K)=W(K')$.
\end{cor}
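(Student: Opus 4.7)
The plan is to deduce everything directly from Lemma \ref{g1alex}, which asserts
$$\Delta(K)(t)\doteq W+(1-2W)t+Wt^2,\qquad W:=\det M.$$
Since the Alexander polynomial is an invariant of $K$ up to the equivalence $\doteq$, it suffices to prove that the map
$$\Phi\colon \Z\to \Lambda/\!\doteq,\qquad W\mapsto [W+(1-2W)t+Wt^2]_{\doteq}$$
is injective. Well--definedness of $W(K)$ and the ``if and only if'' statement will then follow at once: two special diagrams of $K$ yield matrices $M,M'$ whose associated polynomials both coincide (up to $\doteq$) with $\Delta(K)(t)$, forcing $\det M=\det M'$; and $\Delta(K)(t)\doteq \Delta(K')(t)$ translates, via $\Phi$, into $W(K)=W(K')$.

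To verify injectivity of $\Phi$, suppose $W+(1-2W)t+Wt^2\doteq W'+(1-2W')t+W't^2$, that is,
$$W+(1-2W)t+Wt^2=\pm t^{k}\bigl(W'+(1-2W')t+W't^2\bigr)$$
for some $k\in\Z$. First I would treat the case $W=0$: then the left-hand side equals $t$, a monomial, so the right-hand side is a monomial too; since $W'\in\Z$ implies $1-2W'\neq 0$, the polynomial $W'+(1-2W')t+W't^2$ has nonzero middle coefficient, so the only way it can be a monomial is $W'=0$. Next, if $W\neq 0$, then $1-2W\neq 0$ (as $W$ is an integer), hence the left-hand side has exactly three nonzero monomials of degrees $0,1,2$. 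Matching lowest and highest degrees on both sides forces $k=0$; comparing the degree--$0$ (or degree--$2$) coefficients gives $W=\pm W'$, and comparing the degree--$1$ coefficient gives $1-2W=\pm(1-2W')$ with the \emph{same} sign. The sign $-$ leads at once to the contradiction $-1=1$, so we must have $+$, and hence $W=W'$.

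The main (mild) obstacle is making sure the case $W=0$ is handled separately, since otherwise the ``breadth $=2$'' argument breaks down; once this is isolated, the proof is essentially immediate from Lemma \ref{g1alex}.
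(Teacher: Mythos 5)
Your proof is correct and follows essentially the same route as the paper's: both deduce the statement from Lemma~\ref{g1alex} by comparing coefficients of $W+(1-2W)t+Wt^2$ up to the equivalence $\doteq$. You are somewhat more careful than the paper in ruling out a nontrivial shift $t^k$ and in isolating the degenerate case $W=0$, but the underlying argument is identical.
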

\begin{proof}
Suppose that $\Dd'(T)$ is a special diagram of $K$ of genus 1, and let
$M'$ be the matrix encoding the linking numbers of the strings
of $T$. By Lemma~\ref{g1alex} we have 
$$
\det M +(1-2\det M)t +(\det M) t^{2}
\doteq 
\det M' +(1-2\det M')t +(\det M') t^{2},
$$
so $\det M=\pm \det M'$, $
1-2\det M=\pm(1-2\det M')$, and $\det M=\det M'$.
\end{proof}

Putting together Lemma~\ref{g1alex} and Lemma~\ref{a>1} we readily get
the following:

\begin{prop}\label{genus1a1}
Let $K$ be a knot such that $g(K)=1$, let us take
$z\in\N$ and a quandle $\X=(\F(p,h(t)),*)\in \Qq_\Ff$.
Then $$a_\X(K,z)\geq 1$$ if
  and only if 
$$h(t) \; \Big|\; W(K) +(1-2W(K))t^z +W(K) t^{2z} \quad {\rm in}\ 
\Lambda_p\ .$$
In particular, if $W(K)=0$ then $a_\X (K,z)=0$.
\end{prop}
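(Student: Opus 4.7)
The plan is to deduce the proposition directly from the two main ingredients already proved earlier in this section: Lemma~\ref{g1alex}, which computes $\Delta(K)(t)$ explicitly in terms of $W(K)=\det M$, and Corollary~\ref{a>1}, which reformulates positivity of $a_\X(L,\Pp_m,\overline{z})$ as a divisibility condition in $\Lambda_p$.

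First I would note that for a knot the partitions $\Pp_m=\Pp_M$ coincide, so $a_\X(K,z)$ is exactly $a_\X(K,\Pp_m,\overline{z})$ for the cycle $\overline{z}$ taking the constant value $z$. Applying Corollary~\ref{a>1} then gives: $a_\X(K,z)\geq 1$ if and only if $h(t)$ divides $\pi_p(\Delta(K)(t^z))$ in $\Lambda_p$. Next, Lemma~\ref{g1alex} yields $\Delta(K)(t)\doteq W(K)+(1-2W(K))t+W(K)t^2$ in $\Lambda$. Since the substitution $t\mapsto t^z$ is a ring endomorphism sending the unit $\pm t^k$ to the unit $\pm t^{kz}$, the relation $\doteq$ is preserved; reducing modulo $p$ likewise preserves $\doteq$. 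Hence
$$
\pi_p(\Delta(K)(t^z))\doteq W(K)+(1-2W(K))t^z+W(K)t^{2z} \quad \text{in } \Lambda_p,
$$
and divisibility by $h(t)$ in $\Lambda_p$ depends only on the generated ideal. The first claim follows.

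For the ``in particular'' statement, substituting $W(K)=0$ reduces the polynomial $W(K)+(1-2W(K))t^z+W(K)t^{2z}$ to $t^z$, which is a unit in $\Lambda_p=\Z_p[t,t^{-1}]$. Since $h(t)$ is irreducible of breadth $\geq 1$, it is not a unit in $\Lambda_p$, hence does not divide $t^z$; therefore $a_\X(K,z)$ cannot be $\geq 1$, so it equals $0$.

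There is no real obstacle here: the only care needed is to verify that $\doteq$ interacts correctly with the substitution $t\mapsto t^z$ and with $\pi_p$, which is routine since both operations send units to units. Once this is granted, the proposition is essentially a one--line consequence of the machinery already in place.
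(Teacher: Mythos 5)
Your argument is exactly the paper's: the proposition is stated there as an immediate consequence of Lemma~\ref{g1alex} and Corollary~\ref{a>1}, which is precisely the combination you use, and your extra checks (that $\doteq$ is preserved under $t\mapsto t^z$ and $\pi_p$, and that $t^z$ is a unit not divisible by the positive-breadth irreducible $h(t)$) correctly fill in the routine details the paper leaves implicit.
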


\begin{cor}\label{g=1completo} 
  Let $K$ be a knot, such that $g(K)\leq 1$.  Then $\Aa_\Qq(K)=0$ if
  and only if $W(K)=0$
(due to Lemma~\ref{AB}, this condition is
  equivalent to $\det K=1$). In all the other cases there exists a
  dihedral quandle $\X\in \Qq_\Ff(1)$ such that
  $\Aa_\Qq(K)=a_\X(K,1)\geq 1$. We have in particular
$$ 
\delta (K)=1,\qquad \theta(K)=2\ .
$$ 
\end{cor}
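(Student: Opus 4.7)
The plan is to exploit the explicit genus--$1$ machinery already developed, most notably Lemma~\ref{g1alex}, Proposition~\ref{genus1a1} and Corollary~\ref{sharp-2g-2}, together with the general bound $\Aa_\Qq(K)\leq 2g(K)\leq 2$ from Corollary~\ref{lowerbound}. The unknot case is trivial (any $\X$-coloring is constant, so $\Aa_\Qq(K)=0$, consistent with $\det K=1$ and the convention $W(K)=0$), so I concentrate on the case $g(K)=1$, where $W(K)$ is unambiguously defined by Corollary~\ref{W(K)}. In particular $\Aa_\Qq(K)\in\{0,1,2\}$, so the task splits into three sub-cases.

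The equivalence $\Aa_\Qq(K)=0\Longleftrightarrow W(K)=0$ is dispatched quickly. If $W(K)=0$, Proposition~\ref{genus1a1} gives $a_\X(K,z)=0$ for every $\X$ and $z$, hence $\Aa_\Qq(K)=0$. Conversely, Lemma~\ref{g1alex} says $\Delta(K)(t)\doteq W(K)+(1-2W(K))t+W(K)t^2$, so $W(K)\neq 0$ forces $\Aa(K)=\br\Delta(K)(t)=2>0$, and then Theorem~\ref{K=K} forces $\Aa_\Qq(K)>0$.

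Assume now $W(K)\neq 0$; I will produce an explicit dihedral quandle that realizes $\Aa_\Qq(K)$. By Lemma~\ref{AB} we have $\det K=|4W(K)-1|$, which is always odd and, since $W(K)\neq 0$, satisfies $|4W(K)-1|\geq 3$. Pick any odd prime $p$ dividing $\det K$ and set $\X=\Dd_p=(\F(p,1+t),*)$. Evaluating the quadratic $W(K)+(1-2W(K))t+W(K)t^2$ at $t=-1$ yields $4W(K)-1$, which vanishes modulo $p$, so $(1+t)\mid W(K)+(1-2W(K))t+W(K)t^2$ in $\Lambda_p$. Proposition~\ref{genus1a1} therefore gives $a_{\Dd_p}(K,1)\geq 1$. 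If $\Aa_\Qq(K)=1$, the inequality $a_{\Dd_p}(K,1)\leq \Aa_\Qq(K)$ forces equality. If instead $\Aa_\Qq(K)=2$, Corollary~\ref{sharp-2g-2} directly supplies a dihedral quandle in $\Qq_\Ff(1)$ realizing the value $2$ already at $z=1$.

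The conclusion $\delta(K)=1$, $\theta(K)=2$ is then immediate: the realizing $\Dd_p$ lies in $\Qq_\Ff(1)$ and has type $2$, so $\delta(K)\leq 1$ and $\theta(K)\leq 2$; the opposite inequality $\theta(K)\geq 2$ holds because a trivial quandle produces only constant colorings, hence can never realize a positive value of $\Aa_\Qq$. The only genuine obstacle I foresee is the $\Aa_\Qq(K)=1$ step: Proposition~\ref{subfam}(2) gives $\delta(K)=1$ but does not a priori guarantee a \emph{dihedral} representative. The evaluation-at-$t=-1$ trick, which works precisely because Lemma~\ref{g1alex} pins down $\Delta(K)(t)$ to a single symmetric quadratic in $W(K)$, is what converts the abstract bound into the concrete dihedral realization claimed in the statement.
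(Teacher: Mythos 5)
Your proof is correct and follows essentially the same route as the paper: the equivalence with $W(K)=0$ via Proposition~\ref{genus1a1}, and the choice of an odd prime $p$ dividing $1-4W(K)$ (equivalently $\det K$) so that $1+t$ divides the quadratic of Lemma~\ref{g1alex} mod $p$, yielding $a_{\Dd_p}(K,1)\geq 1$. You are in fact slightly more careful than the paper's terse proof, which only exhibits a dihedral quandle with $a_\X(K,1)\geq 1$ and leaves implicit the appeal to Corollary~\ref{sharp-2g-2} needed when $\Aa_\Qq(K)=2$, as well as the verification that $\theta(K)\geq 2$.
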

\Dim By Proposition~\ref{genus1a1}, it is sufficient to show that,
if $W(K)\neq 0$, then there exists
 a dihedral quandle $\X\in \Qq_\Ff(1)$ such that $\Aa_\Qq(K)=a_\X(K,1)\geq 1$.

 In fact, if $W(K)\neq 0$ we may choose an odd prime $p$ be dividing
 $1-4W(K)$. Then the polynomial $1+t$ divides $W(K)t^2+(1-2W(K))t+W(K)$ in $\Lambda_p$. By Proposition~\ref{genus1a1}, this
 implies that $a_\X (K,1)\geq 1$, where $\X$ is the dihedral quandle
 $\X=(F(p,1+t),\ast)$.  \cvd

\smallskip 

\begin{remark} {\rm By Corollary~\ref{g=1completo}, every genus--1
    knot such that $\Aa_\Qq(K)\geq 1$ is such that
    $\delta(K)=1$. Since for every such knot we obviously have
    $\Aa(K)\leq 2g(K)=2$, this fact is also a consequence of
    Proposition~\ref{min}.  }
\end{remark}

\subsection {A few manipulations on special diagrams} 
Here below we describe a few simple manipulations on (genus--1) special
diagrams, which are useful to construct large families of examples.

\begin{lem}\label{sharp-3} 
  Let $K$ be a genus--1 knot with  $a_\X(K,z)=2$
for some quandle $\X\in\Qq_\Ff$, 
and let
  $(\Dd(T),z)$ be a special diagram of $(K,z)$.
Then by adding kinks to only one of the two strings of
  $T$ we can arbitrarily modify either $M_{1,1}$ or $M_{2,2}$, so
  that the resulting $(D(T'),z)$ is a special diagram of some $(K',z)$
  such that $a_\X(K',z)=1$.
\end{lem}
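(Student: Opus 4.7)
The plan is to combine the explicit matrix characterization from Proposition~\ref{sharp-2g} with a straightforward analysis of how Reidemeister~I moves (kinks) affect the entries of $M$.

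First I would recall that, by Proposition~\ref{sharp-2g} applied to the special diagram $(\Dd(T),z)$ of $(K,z)$, the hypothesis $a_\X(K,z)=2$ forces $M_{1,1}=M_{2,2}=0$, $M_{1,2}=(p+1)/2$ and $M_{2,1}=(p-1)/2$ in $\Z_p$, together with the divisibility $h(t)\mid (1+t^z)$ in $\Lambda_p$. The matrix $N(z,p,h(t))$ consequently vanishes identically. Adding a positive (resp.~negative) Reidemeister~I kink to the $i$-th string of $T$ produces a new primary tangle whose linking-number matrix differs from $M$ only in the entry $M_{i,i}$, which is shifted by $+1$ (resp.~$-1$); in particular $M_{1,2}$, $M_{2,1}$, and the other diagonal entry are unaffected. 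Iterating such moves on a single string therefore realizes any prescribed integer value for $M_{1,1}$ or $M_{2,2}$, while leaving the rest of $M$ fixed. This gives a new special diagram $(\Dd(T'),z)$ of some new knot $(K',z)$.

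Next I would compute the new matrix $N'(z,p,h(t))$. Since the off-diagonal entries of $N(z,p,h(t))$ were already zero and remain unchanged, and the untouched diagonal entry remains zero, $N'(z,p,h(t))$ is diagonal with one entry equal to $0$ and the other equal to $(\overline{t}^z-1)\,k$, where $k$ is the new integer value of the modified $M_{i,i}$. By Lemma~\ref{Qlemma}, $a_\X(K',z)$ equals the dimension of $\ker N'(z,p,h(t))$, which is either $2$ (when $(\overline{t}^z-1)k=0$ in $\F(p,h(t))$) or $1$ (otherwise). So the task reduces to choosing $k$ so that $(\overline{t}^z-1)k\ne 0$ in $\F(p,h(t))$.

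The main point to verify—and the only subtle step—is that $\overline{t}^z-1\ne 0$ in $\F(p,h(t))$, i.e.~that $h(t)$ does not divide $t^z-1$ in $\Lambda_p$. This follows from the hypothesis $h(t)\mid (1+t^z)$: were $h(t)$ also to divide $t^z-1$, it would divide their difference $2$, and since $p$ is odd $2$ is a unit in $\Z_p$, forcing $h(t)$ to be a unit, contradicting $\br h(t)\ge 1$. Given this, it suffices to pick $k$ whose class in $\Z_p\subseteq \F(p,h(t))$ is nonzero. Concretely, adding a single kink to the chosen string makes the new $M_{i,i}$ differ from the original (which reduces to $0$ mod $p$) by $\pm 1$, so its image in $\Z_p$ is $\pm 1\ne 0$. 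Hence $a_\X(K',z)=1$, as required.
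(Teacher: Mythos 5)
Your proof is correct; the paper states this lemma without proof, treating it as an immediate consequence of Proposition~\ref{sharp-2g} and Lemma~\ref{Qlemma}, and your argument supplies exactly the intended details (kinks shift only the relevant diagonal entry $M_{i,i}$ by $\pm 1$, so one kink already moves it off $0$ in $\Z_p$). The key point you isolate --- that $h(t)\mid(1+t^z)$ forces $\overline{t}^z-1=-2\neq 0$ in $\F(p,h(t))$ because $p$ is odd --- is precisely what makes the modified diagonal entry of the new matrix $N'(z,p,h(t))$ nonzero and drops the kernel dimension from $2$ to $1$.
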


\begin{lem}\label{sharp-4}
  Let $K$ be a genus--1 knot with $a_\X(K,z)\geq 1$ for some quandle
  $\X=(\F(p,h(t)),*)$, and let $(\Dd(T),z)$ be a special
  diagram of $(K,z)$.  Let us modify $T$ by means of any sequence of
  usual second and third Reidemeister moves, of first Reidemeister
  moves provided that $M_{1,1}$ and $M_{2,2}$ are kept constant mod $(p)$,
  and of positive (negative) {\rm linking moves} between the two
  strings, (see Figure \ref{linking-move}; here the actual sign of the
  move depends also on the omitted orientations of the strings),
  provided that their number is equal to $0$ mod $\ (p)$. Then we get
  a special diagram $(\Dd(T'),z)$ of some $(K',z)$ such that
  $a_\X(K,z)= a_\X(K',z)\geq 1$.

\end{lem}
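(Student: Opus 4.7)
My plan is to reduce the statement to an invariance of the matrix $M$ modulo $p$, and then check case by case that each allowed move on $T$ preserves $M$ mod $p$. The starting point is Lemma \ref{Qlemma}, which identifies $a_\X(K,z)$ with $\dim_{\F(p,h(t))}\ker N(z,p,h(t))$. Since $N(z)=(t^z-1)M+t^z J$ and the matrix $J$ is the universal one coming from Lemma \ref{B==1}, the matrix $N(z,p,h(t))$ and hence its kernel depend only on the class of $M$ in the ring $M_{2}(\Z_p)$. Thus it suffices to prove that each admissible move on $T$ leaves the entries of $M$ unchanged modulo $p$; as $K'$ will by construction still arise as the boundary of the (genus--$1$) Seifert surface associated to the new primary tangle with the same closing tangle $C$, it remains a knot, and the common value of $a_\X$ is $\geq 1$ by hypothesis.

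Now I would go through each move. Reidemeister moves of type II and III leave the collection of signed crossings between any ordered pair of strings of $T$ unchanged: a type II move adds (or removes) two crossings of the \emph{same} under--over pattern but of opposite signs, which cancel in the signed count, while a type III move only permutes existing crossings and preserves both their signs and which strand is the overstrand. Hence $M$ is preserved on the nose. A Reidemeister I move on the $i$--th string adds a self--crossing of sign $\pm 1$ and thus changes $M_{i,i}$ by $\pm 1$ while leaving every other entry of $M$ unchanged. The hypothesis that $M_{1,1}$ and $M_{2,2}$ are kept constant modulo $p$ is exactly what is needed to ensure that the accumulated effect of all Reidemeister I moves is zero modulo $p$.

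Finally I would analyse the linking moves. Each such move modifies the tangle locally between the two strings; by direct inspection of Figure \ref{linking-move} (and bearing in mind that Lemma \ref{B==1} forces $M_{1,2}-M_{2,1}=1$ to be preserved by \emph{any} admissible modification), a positive (respectively negative) linking move increases (respectively decreases) both $M_{1,2}$ and $M_{2,1}$ by $1$, independently of the omitted orientations. Consequently, if the total algebraic number of such moves is $\equiv 0 \pmod p$, then both $M_{1,2}$ and $M_{2,1}$ are preserved modulo $p$. Combining the three bullets, every allowed modification preserves $M$ mod $p$, so $N(z,p,h(t))$ is unchanged and $a_\X(K',z)=a_\X(K,z)$.

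The main subtle point will be the analysis of the linking move: one has to verify from the figure that its effect on the two off--diagonal entries of $M$ is \emph{the same} sign (rather than opposite signs, which would have matched the behaviour of the linking number but not of $M$), and that this is independent of the orientations that were suppressed from the picture. Once that local check is done, everything else is a direct count of signed crossings, and the conclusion $a_\X(K',z)=a_\X(K,z)\geq 1$ is immediate from Lemma \ref{Qlemma}.
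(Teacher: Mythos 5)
Your proof is correct and takes precisely the route the paper intends: the lemma is stated there without an explicit proof, but the surrounding material (Lemma~\ref{Qlemma} together with the explicit genus--$1$ form of $N(z,p,h(t))=(\overline{t}^z-1)M+\overline{t}^zJ$) shows that the intended argument is exactly your reduction to the invariance of $M$ modulo $p$ under the listed moves. Your local check that a linking move shifts \emph{both} off--diagonal entries of $M$ by the same sign (consistently with Lemma~\ref{B==1}) is the one genuinely nontrivial verification, and you have it right.
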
 
\begin{figure}[htbp]
\begin{center}
 \includegraphics[height=3cm]{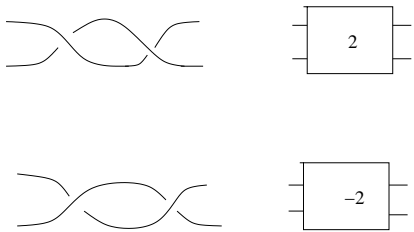}
\caption{\label{linking-move} Linking moves.}
\end{center}
\end{figure}

\medskip 




\subsection{The dihedral case}\label{dihedral} 
Let us specialize the results above to the simplest case of dihedral
quandles, \emph{i.e.} to the case when $\X=\D_p=(\F(p,1+t),\ast)$ and
$z=1$.  In such a case we simply write $$a_p(K)=a_{\D_p}(K,1) \ . $$
The following result is an immediate consequence of
Propositions~\ref{sharp-2g} and~\ref{genus1a1}.

\begin{lem}\label{dihedral:lem} Let $K$ be a knot such that $g(K)=1$,
represented by a special diagram $\Dd(T)$.
\begin{enumerate}
\item $a_p(K)=2$ if and only the following system of relations is
  satisfied in $\Z_p$:
$$M_{1,2}=\frac{p+1}{2}, \ M_{2,1} = \frac{p-1}{2}, \ M_{2,2}=0, \  M_{1,1}=0 \ . $$
\item $a_p(K)\geq 1$ if and only if 
$$1-4W(K) =0\quad {\rm in}\ \Z_p\ .$$
By Lemma~\ref{AB}, this condition holds if and only if
$p$ divides $\det K$.
\end{enumerate}
\end{lem}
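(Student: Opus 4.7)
The plan is to derive both statements as direct specializations of Propositions~\ref{sharp-2g} and~\ref{genus1a1} to the case $\X=\D_p=(\F(p,1+t),*)$, $z=1$, observing that here $h(t)=1+t$ and $t^z=t$.

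First I would address part~(1). By Proposition~\ref{sharp-2g}, $a_\X(K,1)=2$ is equivalent to the four numerical conditions $M_{1,1}=M_{2,2}=0$, $M_{1,2}=(p+1)/2$, $M_{2,1}=(p-1)/2$ in $\Z_p$, together with the divisibility $h(t)\mid (1+t^z)$ in $\Lambda_p$. Since $h(t)=1+t$ and $z=1$, the polynomial $1+t^z$ coincides with $h(t)$, so the divisibility condition is automatic. This yields exactly the system of relations in the statement.

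For part~(2), I would apply Proposition~\ref{genus1a1}: $a_\X(K,1)\geq 1$ if and only if
$$1+t \;\Big|\; W(K)+(1-2W(K))t+W(K)t^{2} \quad\text{in } \Lambda_p.$$
A linear polynomial $1+t$ divides a polynomial $q(t)\in\Lambda_p$ precisely when $q(-1)=0$ in $\Z_p$. Evaluating at $t=-1$ gives
$$W(K)-(1-2W(K))+W(K)=4W(K)-1,$$
so the divisibility holds if and only if $4W(K)-1=0$ in $\Z_p$, equivalently $1-4W(K)=0$ in $\Z_p$. Finally, by Lemma~\ref{AB} we have $\det K=|4\det M-1|=|4W(K)-1|$, so this last condition is equivalent to $p\mid \det K$.

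No real obstacle is expected: both parts are essentially substitutions of $h(t)=1+t$ and $z=1$ into the general criteria already established, together with the elementary observation that $1+t$ divides a polynomial over $\Lambda_p$ iff that polynomial vanishes at $t=-1$. The only minor point to record is the reformulation $1-4W(K)\equiv 0 \pmod p \Longleftrightarrow p\mid \det K$, which follows immediately from Lemma~\ref{AB}.
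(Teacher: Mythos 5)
Your proof is correct and follows exactly the route the paper intends: the paper states the lemma as an immediate consequence of Propositions~\ref{sharp-2g} and~\ref{genus1a1}, and your specialization to $h(t)=1+t$, $z=1$ (with the observation that $1+t\mid 1+t^z$ is automatic, and that divisibility by $1+t$ amounts to vanishing at $t=-1$) is precisely the omitted verification.
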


\smallskip

Now we want to show that, for every $p>2$, the first set of conditions
in the last Lemma can be actually realized by a special diagram
$\Dd(T_p)$ of some knot $K_p$. Let us consider the tangle of Figure
\ref{T(p)}. Here $p=2k+1$ and there are $k$ (resp.~$k+1$) overcrossing
(resp.~undercrossing) vertical strands. So it is immediate to verify
that (in $\Z$):
$$ M_{1,1}=0, \quad M_{2,2}=p,\quad M_{1,2}=- \frac{p-1}{2}, \quad M_{2,1}
=-\frac{p+1}{2} \ , $$
whence
$$
1-4W(K)=1-4\det M=p^2\ .
$$ 
\begin{figure}[htbp]
\begin{center}
 \includegraphics[height=6cm]{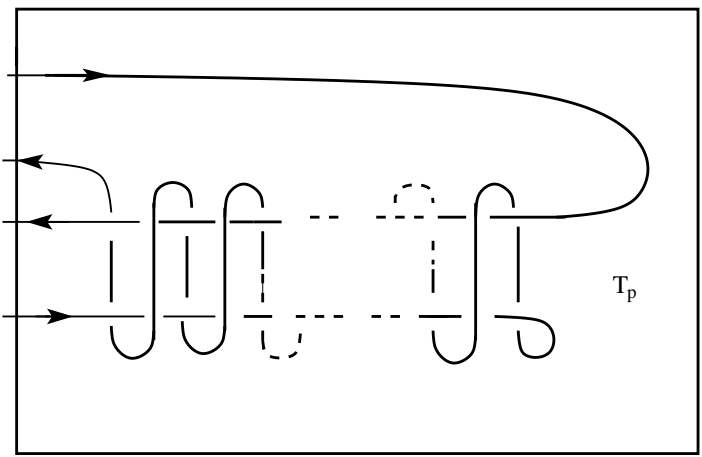}
\caption{\label{T(p)} The tangle $T(p)$.}
\end{center}
\end{figure}
Hence:

\begin{lem}\label{Kp} The family $\{K_p\}$ of genus--1 
  knots constructed above is such that $a_p(K_p)=2$ while
  $a_{p'}(K_p)=0$ for every $p'\neq p$. In particular, $K_p$ is not
  isotopic to $K_{p'}$ if $p\neq p'$, and $\Aa_\Qq(K_p)=2$ for every
  $p$.
\end{lem}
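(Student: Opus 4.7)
The plan is to verify everything by direct substitution, since all the heavy lifting has already been done in the preceding lemmas. The matrix entries $M_{i,j}$ computed for the tangle $T(p)$ are integer numbers, and the claim $a_p(K_p)=2$ is a congruence condition modulo $p$; the claim $a_{p'}(K_p)=0$ (for $p'\neq p$) is a divisibility condition on $\det K_p$; and $\Aa_\Qq(K_p)=2$ is a matter of combining what we get with the general upper bound from Corollary~\ref{lowerbound}.

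First I would check that the explicit values
$$M_{1,1}=0,\quad M_{2,2}=p,\quad M_{1,2}=-\frac{p-1}{2},\quad M_{2,1}=-\frac{p+1}{2}$$
satisfy the system in Lemma~\ref{dihedral:lem}(1) when read modulo $p$. Indeed $M_{1,1}\equiv 0$ and $M_{2,2}\equiv 0$ are obvious, while $-(p-1)/2\equiv 1/2\equiv (p+1)/2\pmod p$ and $-(p+1)/2\equiv -1/2\equiv (p-1)/2\pmod p$. This gives $a_p(K_p)=2$ at once.

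Next, for a prime $p'\neq p$, Lemma~\ref{dihedral:lem}(2) says that $a_{p'}(K_p)\geq 1$ if and only if $p'$ divides $\det K_p$. But the computation done just before the statement of the lemma gives $1-4W(K_p)=p^2$, hence $\det K_p=|1-4W(K_p)|=p^2$ (using the identity $\det K=|4\det M-1|$ from Lemma~\ref{AB}). Since $p'\neq p$ is prime, it cannot divide $p^2$, so $a_{p'}(K_p)=0$. In particular, the integer $a_p(\cdot)$ takes the value $2$ on $K_p$ and $0$ on $K_{p'}$ whenever $p\neq p'$, so the knots $K_p$ are pairwise non-isotopic.

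Finally, $g(K_p)=1$ by construction (the tangle $T(p)$ is the primary tangle of a genus--$1$ special diagram), so Corollary~\ref{lowerbound} gives $\Aa_\Qq(K_p)\leq 2g(K_p)=2$; combined with $a_p(K_p)=2$ this yields $\Aa_\Qq(K_p)=2$. There is no real obstacle here: every step is a direct consequence of results already in the paper, and the only thing to be careful about is the sign conventions in reading off the $M_{i,j}$ from the picture of $T(p)$ (i.e., that the overcrossings and undercrossings have been counted with the correct signs so that $W(K_p)=\det M=(1-p^2)/4$).
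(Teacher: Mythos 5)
Your proposal is correct and follows exactly the route the paper intends: the paper itself gives no separate proof of Lemma~\ref{Kp}, but derives it (via the word ``Hence'') from the displayed values of the $M_{i,j}$ for $T(p)$, the identity $1-4W(K_p)=p^2$, Lemma~\ref{dihedral:lem}, and the genus bound of Corollary~\ref{lowerbound}, which is precisely what you verify. The only cosmetic point is that the construction directly gives $g(K_p)\le 1$ rather than $g(K_p)=1$, but your chain $2=a_p(K_p)\le\Aa_\Qq(K_p)\le 2g(K_p)\le 2$ forces equality everywhere, so nothing is missing.
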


\medskip

Let us now modify $T_p$ into a tangle $T'_p$ by adding one positive
kink to the second string of $T$, and let us denote by $K'_p$ the knot
described by the special diagram $\Dd (T'_p)$.

\begin{lem}\label{K'p}  
  The family $\{K'_p\}$ of genus--1 knots just constructed is such that
  $a_p(K'_p)=1$ while $a_{p'}(K'_p)=0$ for every $p'\neq p$. In
  particular, $K'_p$ is not isotopic to $K'_{p'}$ if $p\neq p'$, and
  $\Aa_\Qq(K'_p)=1$ for every $p$. Moreover, $\Delta(K_p)(t)\doteq \Delta(K'_p)(t)$
for every odd prime $p$.
\end{lem}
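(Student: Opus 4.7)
The plan is to mimic the analysis carried out for $K_p$ after recording how the perturbation changes the linking matrix. A positive first Reidemeister move on the second string of $T_p$ adds $+1$ to its self-crossing count and leaves all other entries untouched (compare Lemma~\ref{sharp-3}), so the new linking matrix is
\[
M' \;=\; \begin{pmatrix} 0 & -\tfrac{p-1}{2}\\[2pt] -\tfrac{p+1}{2} & p+1 \end{pmatrix}.
\]
A direct computation yields $\det M' = -(p^2-1)/4$, so $1-4\det M' = p^2 = 1-4\det M$, and Lemma~\ref{AB} gives $\det K'_p = p^2$. Combining Lemma~\ref{g1alex} with Corollary~\ref{W(K)} we then get $\Delta(K'_p)(t)\doteq \Delta(K_p)(t)$, which settles the last assertion of the statement.

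Next I would extract the dihedral invariants from Lemma~\ref{dihedral:lem}: part~(2), together with $\det K'_p = p^2$, shows that $a_{p'}(K'_p)\geq 1$ if and only if $p'$ divides $p^2$, i.e.\ if and only if $p'=p$; hence $a_{p'}(K'_p)=0$ for every $p'\neq p$. To pin down $a_p(K'_p)$ exactly, I would apply part~(1): the required condition $M'_{2,2}\equiv 0\pmod p$ fails because $M'_{2,2}=p+1\equiv 1\pmod p$, so $a_p(K'_p)=1$.

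It remains to upgrade $a_p(K'_p)=1$ to $\Aa_\Qq(K'_p)=1$, ruling out that some non-dihedral quandle reaches the value~$2$. Since $K'_p$ is nontrivially colored by $\Dd_p$ it is knotted, so $g(K'_p)=1$ and Corollary~\ref{lowerbound} gives $\Aa_\Qq(K'_p)\leq 2$. If the value~$2$ were attained, Corollary~\ref{sharp-2g-2} applied to the special diagram $\Dd(T'_p)$ would provide an odd prime $p'$ with $M'_{2,2}\equiv 0$, $M'_{1,2}\equiv (p'+1)/2$ and $M'_{2,1}\equiv (p'-1)/2$ in $\Z_{p'}$. The first relation forces $p'\mid p+1$; the second, after multiplying by $2$, reads $-(p-1)\equiv 1\pmod{p'}$, which forces $p'\mid p$, hence $p'=p$. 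These two divisibilities are incompatible for $p\geq 3$, so $\Aa_\Qq(K'_p)=1$. The knots $K'_p$ are then pairwise non-isotopic because the unique prime $p'$ for which $a_{p'}(K'_p)$ is nonzero is $p$ itself.

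The heart of the argument is this last step: one must check that the single increment of $M_{2,2}$ genuinely destroys the congruence pattern that allowed $K_p$ to achieve $\Aa_\Qq=2$, while simultaneously preserving $\det K$ (and hence the Alexander polynomial up to units) so that the $\Dd_p$-invariant drops from $2$ to exactly~$1$ instead of to~$0$.
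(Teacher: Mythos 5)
Your proof is correct and follows essentially the same route as the paper's: you record that the kink changes only $M_{2,2}$ (from $p$ to $p+1$), so $\det M'=\det M$ and hence $W(K'_p)=W(K_p)$, which gives the Alexander--polynomial claim together with $a_{p'}(K'_p)=0$ for $p'\neq p$ and $a_p(K'_p)\geq 1$, while $M'_{2,2}\equiv 1\not\equiv 0 \pmod p$ rules out $a_p(K'_p)=2$. The only difference is that you make explicit, via Corollary~\ref{sharp-2g-2}, the passage from $a_p(K'_p)=1$ to $\Aa_\Qq(K'_p)=1$ (ruling out non--dihedral quandles), a step the paper leaves implicit since Corollary~\ref{g=1completo} guarantees that for knots of genus at most $1$ the invariant $\Aa_\Qq$ is always realized by a dihedral quandle.
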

\begin{proof}
  It is readily seen that $W(K'_p)=W(K_p)=(p^2-1)/4$, so
$\Delta(K'_p)(t)=\Delta(K_p)(t)$ and 
  $a_p(K'_p)\geq 1$, while $a_{p'}(K'_p)=0$ for every $p'\neq
  p$. Moreover, $a_p (K'_p)\neq 2$ since for the tangle $T'_p$ the
  value of $M_{2,2}$ is equal to $p+1$ which is not null in $\Z_p$.
\end{proof}

\begin{figure}[htbp]
\begin{center}
 \includegraphics[height=5cm]{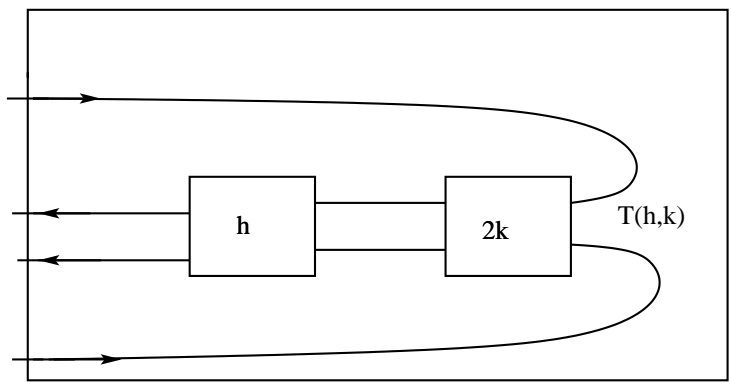}
\caption{\label{h-k} 
The tangle $T(h,k)$. The rectangular boxes refer to the tangles
described in Figure~\ref{linking-move}.}
\end{center}
\end{figure}

Lemmas~\ref{Kp} and~\ref{K'p} imply that the quandle invariant $\Aa_\Qq$ can be more
effective than
the Alexander polynomial in distinguishing knots, and
this phenomenon shows up already in the case of genus--1 knots:

\begin{cor}\label{diversi:cor}
 There exist genus--1 knots $K$, $K'$ such that $\Delta(K)(t)\doteq \Delta(K')(t)$,
while $\Aa_\Qq (K)=2$ and $\Aa_\Qq(K')=1$. Moreover, 
$\Aa_\Qq(K)$ and $\Aa_\Qq(K')$ may be realized by the same dihedral quandle and the same
cocycle $z=1$. 
\end{cor}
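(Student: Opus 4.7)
The plan is to extract the corollary directly from the two preceding Lemmas~\ref{Kp} and~\ref{K'p}, which already do all the work. Fix any odd prime $p$, and set $K := K_p$ and $K' := K'_p$, where $K_p$ is the genus--1 knot presented by the tangle $T_p$ from Figure~\ref{T(p)} and $K'_p$ is obtained from $T_p$ by adding one positive kink to the second string. Both knots have genus at most $1$ by construction (they come with an explicit special diagram of genus $1$), so the hypotheses of the results in this section apply.

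Next, I would quote the two lemmas: Lemma~\ref{Kp} tells us $a_p(K_p) = 2$ and $a_{p'}(K_p)=0$ for every prime $p'\neq p$, hence (after checking the last step below) $\Aa_\Qq(K_p)=2$, realized by the dihedral quandle $\Dd_p = (\F(p,1+t),\ast)$ with cycle $\overline{z}=1$. Lemma~\ref{K'p} tells us $a_p(K'_p)=1$ and $a_{p'}(K'_p)=0$ for $p'\neq p$, hence $\Aa_\Qq(K'_p)=1$, realized by the same quandle $\Dd_p$ and the same cycle. The equality of Alexander polynomials $\Delta(K_p)(t)\doteq \Delta(K'_p)(t)$ is also part of Lemma~\ref{K'p}, and can be seen directly from $W(K_p)=W(K'_p)=(p^2-1)/4$ combined with Lemma~\ref{g1alex}, which expresses the Alexander polynomial of a genus--1 knot purely in terms of $W$.

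The one point that requires a short argument (and which I would spell out carefully) is why the values $a_p(K_p)=2$ and $a_p(K'_p)=1$, computed only for the single dihedral quandle $\Dd_p$, actually equal $\Aa_\Qq(K_p)$ and $\Aa_\Qq(K'_p)$. Since $g(K_p)=g(K'_p)\leq 1$, Corollary~\ref{lowerbound} gives $\Aa_\Qq(K_p),\Aa_\Qq(K'_p)\leq 2g\leq 2$. For $K_p$ this immediately forces $\Aa_\Qq(K_p)=2$. For $K'_p$ we have $\Aa_\Qq(K'_p)\geq 1$, and the value cannot be $2$ by Corollary~\ref{sharp-2g-2} applied to the tangle $T'_p$: indeed the diagonal entry $M_{2,2}=p+1\neq 0$ in $\Z_p$ shows the mandatory conditions of that corollary fail, ruling out $\Aa_\Qq(K'_p)=2$ for every dihedral quandle $\Dd_{p'}$, and in combination with Corollary~\ref{g=1completo} (which guarantees $\Aa_\Qq$ is already detected by a dihedral quandle for genus--1 knots) this yields $\Aa_\Qq(K'_p)=1$.

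There is no real obstacle here: once the knots $K_p$ and $K'_p$ are in hand, the statement is just a repackaging of the two lemmas together with Corollary~\ref{g=1completo}. The only delicate point is making the distinction between $\Aa_\Qq(K'_p)=1$ and $\Aa_\Qq(K'_p)=2$, which is what Corollary~\ref{sharp-2g-2} is designed to do, and the argument above makes the reduction explicit.
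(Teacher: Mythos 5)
Your proposal is correct and follows essentially the same route as the paper, which presents this Corollary as an immediate consequence of Lemmas~\ref{Kp} and~\ref{K'p} applied to the pair $K_p$, $K'_p$ for a fixed odd prime $p$. The extra care you take in passing from the single values $a_p(K_p)=2$, $a_p(K'_p)=1$ to the full invariants $\Aa_\Qq$ (via Corollary~\ref{lowerbound}, Corollary~\ref{sharp-2g-2} and Corollary~\ref{g=1completo}) is exactly the content already packaged inside those two lemmas, so nothing is missing.
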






\subsection{An example involving a quandle of order $p^2$}
All the previous explicit examples are obtained by
using some Alexander quandle structure on $\F_p$. By
Corollary~\ref{g=1completo}, such quandle structures
encode the relevant information about the invariant $\Aa_\Qq$
of genus--1 knots.

Let us show anyway also an example based on a quandle of order $p^2$.
Consider the tangle $T(h,k)$ of Figure \ref{h-k},
encoding a knot $K(h,k)$. One can verify that $a_\X(K(5,3),2)=2$, when
$\X=(\F(11,1+t^2),*)$, which is of type $t_\X>2$ (here $q=11^2$).




\subsection{The general picture of genus--1 knots}
The following Proposition summarizes the discussion carried out in the preceding Subsections:

\begin{prop}\label{AFvAg=1} Let $K$ and $K'$ be knots of genus $g\leq 1$.
Then:
\begin{enumerate}
\item
Let $M$ be the matrix associated to a special diagram of $K$. Then
the integer $W(K)=\det M$ is a well--defined invariant of $K$
(\emph{i.e.}~it does not depend on the chosen diagram).

\item  $\Delta(K)(t)\doteq \Delta(K')(t)$ if and only if $W(K)=W(K')$.

\item $\Aa(K)=0$ if and only if $\Aa_\Qq(K)=0$ if and only if $W(K)=0$.

\item $\Aa(K) \in \{0,2\}$, while $\Aa_\Qq(K)\in \{0,1,2\}$.
More precisely, for every $\eta\in \{0,1,2\}$ there exists a genus--1 knot
$K$ such that $\Aa_\Qq(K)=\eta$.

\item There exist $K$ and $K'$ such that  $\Delta(K)(t)=\Delta(K')(t)$,
while $\Aa_\Qq(K)=2$ and $\Aa_\Qq(K')=1$.
\end{enumerate}
\end{prop}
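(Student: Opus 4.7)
The plan is to assemble the five assertions from results already established in Sections~\ref{AvsA} and~\ref{g=1}, since all needed ingredients are in place; the task reduces to carefully pointing to the relevant statements and verifying that nothing is missing.

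For item~(1), the invariance of $W(K)=\det M$ under the choice of special diagram of $K$ is exactly the content of Corollary~\ref{W(K)}. For item~(2), Lemma~\ref{g1alex} gives the explicit formula $\Delta(K)(t)\doteq W(K)+(1-2W(K))t+W(K)t^{2}$, so equal $W$'s yield $\doteq$-equal Alexander polynomials; conversely, as argued in the proof of Corollary~\ref{W(K)}, from $\Delta(K)(t)\doteq \Delta(K')(t)$ one first obtains $W(K)=\pm W(K')$ by comparing leading coefficients, and then the sign is fixed by comparing the middle coefficient $1-2W$.

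For item~(3), the equivalence $\Aa(K)=0\iff \Aa_\Qq(K)=0$ is the second assertion of Theorem~\ref{K=K}, applicable to any knot. Combining with Lemma~\ref{g1alex}: if $W(K)=0$, the formula shows $\Delta(K)(t)\doteq t$, whence $\Aa(K)=0$; conversely, if $W(K)\neq 0$, Corollary~\ref{g=1completo} provides a dihedral quandle with $a_\X(K,1)\geq 1$, so $\Aa_\Qq(K)\geq 1$. Item~(4) splits into a bound and a realization. For the bound, $\Aa(K)\leq 2g(K)=2$ comes from the classical inequality $\Aa(L)\leq 2g(L)+k-1$ quoted in the paper, and $\Aa(K)$ is always even (as the breadth of a symmetric polynomial); hence $\Aa(K)\in\{0,2\}$. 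The bound $\Aa_\Qq(K)\leq 2g(K)=2$ is Corollary~\ref{lowerbound}. The three values $0,1,2$ of $\Aa_\Qq$ are realized respectively by the unknot, by the family $\{K'_p\}$ (Lemma~\ref{K'p}), and by the family $\{K_p\}$ (Lemma~\ref{Kp}).

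Finally, item~(5) is Corollary~\ref{diversi:cor}, realized by any pair $(K_p,K'_p)$ with $p$ odd prime: by construction $W(K_p)=W(K'_p)=(p^2-1)/4$, hence $\Delta(K_p)(t)\doteq \Delta(K'_p)(t)$ by item~(2), while $\Aa_\Qq(K_p)=2$ and $\Aa_\Qq(K'_p)=1$ by Lemmas~\ref{Kp} and~\ref{K'p}. Since every step is an appeal to a previously proved result, there is no real obstacle; the only care needed is to include a short remark that the unknot indeed gives $W=0$ and $\Aa_\Qq=0$ (realizing the value $\eta=0$ in item~(4)), which is immediate from any trivial diagram.
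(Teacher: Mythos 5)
Your proposal follows essentially the same route as the paper: Proposition~\ref{AFvAg=1} is an assembly of Lemma~\ref{g1alex}, Corollary~\ref{W(K)}, Theorem~\ref{K=K}, Corollary~\ref{g=1completo}, Lemmas~\ref{Kp} and~\ref{K'p}, and Corollary~\ref{diversi:cor}, and your citations for items (1), (2), (3), (5) and for the bounds in item (4) are all correct.

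There is, however, one genuine gap, and it is exactly the point that the paper's own proof singles out as the only thing left to argue. Item (4) asserts that every $\eta\in\{0,1,2\}$ is realized by a \emph{genus--1} knot, and you realize $\eta=0$ by the unknot, which has genus $0$, not $1$. By item (3), a genus--1 knot with $\Aa_\Qq(K)=0$ is the same thing as a genus--1 knot with trivial Alexander polynomial (equivalently $W(K)=0$), and the existence of such a knot is not automatic from the formal machinery you have assembled -- it requires exhibiting an example. The paper does so by taking the Whitehead double of the figure--eight knot (any untwisted Whitehead double of a nontrivial knot works: it bounds a genus--1 Seifert surface, is knotted, and has trivial Alexander polynomial). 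Replacing your appeal to the unknot with such an example closes the gap; everything else in your write-up is sound.
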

\begin{proof}
By Lemmas~\ref{g1alex}, Corollaries~\ref{W(K)}, \ref{g=1completo}, \ref{diversi:cor},
we are only left to prove that 
there exists a genus--1 knot $K$ such that $\Aa_\Qq(K)=0$. 
As an example of such a knot, one may take any genus--1 knot with trivial Alexander polynomial, such as
the Whitehead double of the figure--eight knot.
\end{proof}

\subsection{On genus--1 knots with minimal Seifert rank}
    Recall that a Seifert surface $\Sigma$ of a knot $K$ is said to have
    {\it minimal Seifert rank} if the rank of its Seifert form $S$ equals
    the genus $g=g(\Sigma)$. Moreover, a knot has {\it minimal Seifert rank} if it admits
    a Seifert surface (of arbitrary genus) having minimal
    Seifert rank. It is a well--known fact that every knot $K$ with minimal Seifert
    rank has trivial Alexander polynomial $\Delta(K)(t)\doteq 1$
    (i.e. $\Aa(K)=0$). We claim that:

\smallskip

{\it If $K$ is a knot such that $g(K)\leq 1$ and $\Aa(K)=0$, then
every genus--$1$ Seifert surface for $K$ has minimal Seifert rank.
It follows that a genus--$1$ knot has trivial Alexander polynomial if and only
if it has minimal Seifert rank.}

\smallskip

In fact, let $\Dd$ be a special diagram for $K$ associated to a given
genus--$1$ Seifert surface $\Sigma$, let $M$ be the matrix associated to $\Dd$, and
let $S$ be the matrix representing the Seifert form on $\Sigma$ 
with respect to the
geometric basis carried by $\Dd$. Corollary~\ref{seifertcor} implies
that
$$
S=\left( \begin{array}{cc} M_{1,1} & M_{1,2}-1\\
          M_{2,1}+1 & M_{2,2}
         \end{array}\right)\ ,
$$
so $S$ has rank equal to $1$ if and only if $0=\det S=\det M
-M_{1,2}+M_{2,1}+1=\det M=W(K)$.  By Proposition~\ref{AFvAg=1}--(3),
if $\Aa(K)=0$ then $W(K)=0$, so $S$ has minimal rank.

It is a non--trivial fact proved in \cite{GT} that the last statement
of the claim does not hold in general for knots of genus $\geq 2$.

\section {Sums of genus--1 knots}
\label{g-arbitrario}
We can use genus--1 knots as buiding blocks for the
construction of examples of arbitrary genus. Let us first observe
that, if $K$ and $K'$ are (oriented) knots endowed respectively with
special diagrams $\Dd(T)$ and $\Dd(T')$, then the knot $K+K'$ admits
an obvious special diagram $\Dd(T+T')$ (see Figures~\ref{sum:fig}
and~\ref{sum2:fig}).

\begin{figure}[ht]
\begin{center}
 \includegraphics[width=8cm]{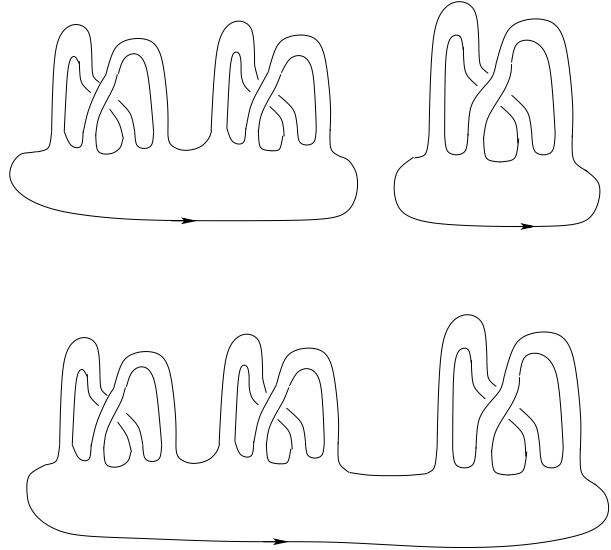}
\caption{\label{sum:fig} On the top: two special diagrams $\Dd,\Dd'$ of the unknot
$K_0$. On the bottom:
the special diagram for $K_0+K_0=K_0$ obtained by ``summing'' the special diagrams
on the top.}
\end{center}
\end{figure}

\begin{figure}[ht]
\begin{center}
 \includegraphics[width=8cm]{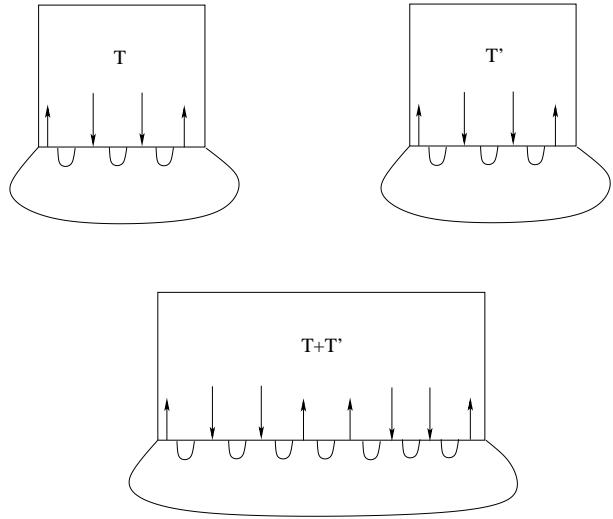}
\caption{\label{sum2:fig} If $T$ and $T'$ are the primary tangles of special
diagrams of $K$ and $K'$, then $T+T'$ is the primary tangle of
a special diagram of $K+K'$.}
\end{center}
\end{figure}



Let $N(z)$, $N'(z)$, $N''(z)$ be the matrices associated to the special diagrams
$\Dd(T)$, $\Dd(T')$, $\Dd(T+T')$ 
as in Section~\ref{main-dim}, where $z$ is a natural number. It is immediate to realize that 
$$
N''(z)=\left( \begin{array}{ccc} N(z) & \vline & 0\\
              \hline 
	0 & \vline & N'(z)
              \end{array}\right)\ .
$$
Since $N(1)$ (resp.~$N'(1)$, $N''(1)$) is 
a Seifert matrix for $K$ (resp.~$K'$, $K+K'$), this readily implies the well--known:

\begin{lem}\label{additive2} 
We have
$$ \Delta(K_1 + K_2 + \dots + K_h)(t)= \prod_{j=1}^h \Delta(K_j)(t)\ ,$$
whence
$$ \Aa(K_1 + K_2 + \dots + K_h)= \sum_{j=1}^h \Aa(K_j) \ . $$ 
\end{lem}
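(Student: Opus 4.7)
The plan is to reduce to the two-summand case and then extract everything from the block-diagonal structure of $N''(1)$ that the excerpt has already established.

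First I would argue that it suffices to prove the case $h=2$, since both $\Delta$ and $\Aa$ extend by an immediate induction on $h$. For $h=2$, the construction of special diagrams for connected sums (described in Figures~\ref{sum:fig} and~\ref{sum2:fig}) yields
$$
N''(z)=\left( \begin{array}{ccc} N(z) & \vline & 0\\
              \hline
	0 & \vline & N'(z)
              \end{array}\right),
$$
as noted immediately above the lemma. Specializing to $z=1$ and invoking Corollary~\ref{seifertcor}, we see that $N(1) = tS(K)-S(K)^T$, $N'(1)=tS(K')-S(K')^T$, and $N''(1)=tS(K+K')-S(K+K')^T$ are (up to the overall conventions made when writing $\Delta$ as $\det(S-tS^T)$ versus $\det(tS-S^T)$) Seifert presentations of the respective Alexander modules. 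The determinant of a block diagonal matrix factors as the product of its block determinants, so
$$
\Delta(K+K')(t) \doteq \det N''(1) = \det N(1)\cdot \det N'(1) \doteq \Delta(K)(t)\,\Delta(K')(t),
$$
which proves the first formula (iterating over $h$ summands).

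For the second formula, I would use the following elementary fact: if $p(t), q(t)\in\Lambda=\Z[t,t^{-1}]$ are both nonzero, then $\br(p(t)q(t)) = \br p(t) + \br q(t)$. Indeed, writing $p(t) = a_r t^r + \cdots + a_s t^s$ with $a_r, a_s \neq 0$ and similarly for $q$, the extreme monomials of $pq$ have coefficients $a_r b_{r'}$ and $a_s b_{s'}$, which are nonzero since $\Z$ is an integral domain. Since every knot $K$ satisfies $\Delta(K)(1) = \pm 1 \neq 0$, each factor $\Delta(K_j)(t)$ is nonzero in $\Lambda$, and applying the fact inductively to the product gives $\Aa(K_1+\cdots+K_h) = \sum_{j=1}^h \Aa(K_j)$.

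I do not expect any substantial obstacle: the geometric input (block structure of $N''$) has already been supplied by the preceding discussion, Corollary~\ref{seifertcor} supplies the identification with the Seifert form, and the additivity of breadth is purely formal given nonvanishing of $\Delta(K_j)(t)$. The only minor point requiring attention is the $\doteq$ ambiguity in $\Delta$, which is harmless since breadth is invariant under multiplication by $\pm t^k$.
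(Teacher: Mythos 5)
Your proposal is correct and follows essentially the same route as the paper, which likewise deduces the lemma from the block-diagonal form of $N''(z)$ together with the identification of $N(1)$, $N'(1)$, $N''(1)$ as presentation matrices $tS-S^T$ for the respective knots. The only difference is that you spell out the breadth-additivity step (using that $\Delta(K_j)(1)=\pm 1$ forces each factor to be nonzero in the domain $\Z[t,t^{-1}]$), which the paper leaves implicit as ``well--known''.
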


Let us now fix an odd prime $p$ and an irreducible element
$h(t)\in\Lambda_p$ of positive breadth.
Since $N(z,p,h(t))$
(resp.~$N'(z,p,h(t))$, $N''(z,p,h(t))$) is obtained
from $N(z)$ (resp.~$N'(z)$, $N''(z)$) just by projecting the coefficients
onto $\F(p,h(t))$, from Lemma~\ref{Qlemma} we deduce the following: 

\begin{lem}\label{additive} 
For every $\X\in \Qq_\Ff$, $z\in \Z_{t_\X}$,  we have
$$
a_\X (K_1+K_2+\ldots+K_h,z)=\sum_{i=1}^h a_\X (K_j,z)\ .
$$
Therefore,
$$ \Aa_\Qq(K_1 + K_2 + \dots + K_h,z)\leq \sum_{j=1}^h \Aa_\Qq(K_j) \ . $$
\end{lem}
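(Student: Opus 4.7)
The approach is to combine the block decomposition
$$N''(z) = \begin{pmatrix} N(z) & 0 \\ 0 & N'(z) \end{pmatrix}$$
noted in the paragraph preceding the lemma with Lemma~\ref{Qlemma}, which identifies $a_\X(K,z)$ with $\dim_{\F(p,h(t))} \ker N(z,p,h(t))$. Since the reduction $\Lambda \to \Lambda_p \to \F(p,h(t))$ is a ring homomorphism applied entrywise, the block structure is preserved, so
$$N''(z,p,h(t)) = \begin{pmatrix} N(z,p,h(t)) & 0 \\ 0 & N'(z,p,h(t)) \end{pmatrix}$$
as an endomorphism of $\F(p,h(t))^{\alpha+\alpha'}$, and therefore
$$\ker N''(z,p,h(t)) = \ker N(z,p,h(t)) \,\oplus\, \ker N'(z,p,h(t)).$$

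Taking $\F(p,h(t))$-dimensions of both sides and applying Lemma~\ref{Qlemma} three times gives
$$a_\X(K+K',z) = a_\X(K,z) + a_\X(K',z).$$
A trivial induction on $h$ then upgrades this to the first claimed equality, for arbitrary finite sums $K_1 + \cdots + K_h$. (One needs only that the construction $T \mapsto T+T'$ of the primary tangle of the sum, sketched in Figure~\ref{sum2:fig}, is associative up to isotopy, which is clear.)

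The inequality for $\Aa_\Qq$ is then formal: taking the supremum over $\X \in \Qq_\Ff$ and over $\Pp_m$-cycles $z$, and using that the supremum of a sum is at most the sum of suprema, we obtain
$$\Aa_\Qq(K_1 + \cdots + K_h) = \sup_{\X,z}\, \sum_{j=1}^h a_\X(K_j,z) \,\leq\, \sum_{j=1}^h \sup_{\X,z} a_\X(K_j,z) \,=\, \sum_{j=1}^h \Aa_\Qq(K_j).$$
There is no real obstacle — every ingredient is already assembled in Section~\ref{main-dim} together with the block-matrix identity recorded just before the lemma. The only conceptual remark worth making is that the inequality is in general strict, because different $K_j$'s may attain their individual suprema on different quandles $\X$ or different cycles $z$; this slack is precisely what leaves room for constructions such as the one in Proposition~\ref{betterinvariant}.
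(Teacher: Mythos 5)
Your proof is correct and follows essentially the same route as the paper: the paper also deduces the additivity directly from the block--diagonal form of $N''(z)$, the fact that projection onto $\F(p,h(t))$ preserves that block structure, and the identification $a_\X(K,z)=\dim\ker N(z,p,h(t))$ from Lemma~\ref{Qlemma}, with the inequality for $\Aa_\Qq$ then following by the sup-of-a-sum argument. Your closing remark about why the inequality can be strict matches the paper's own comment immediately after the lemma.
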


We observe that the equality $\Aa_\Qq(K_1 + K_2 + \dots + K_h,z)= \sum_{j=1}^h \Aa_\Qq(K_j)$
does not hold in general. The equality holds if a single quandle
$\X\in\Qq_\Ff$ exists which realizes all the $\Aa_\Qq(K_i)$'s with respect
to the same cycle.

\smallskip

We are now ready to prove Proposition~\ref{betterinvariant},
which we recall here for the convenience of the reader.

\begin{prop}
Let us fix $g\geq 1$. Then, for every $r_1,r_2$ such that
$1\leq r_1\leq r_2 \leq 2r_1 \leq 2g$, there exist
  knots $K_1$ and $K_2$ such that the following conditions hold: 
$$g(K_1)=g(K_2)=g, \qquad  \Delta (K_1)=\Delta (K_2)
\ {\rm (whence}\  \Aa(K_1)=\Aa(K_2){\rm )}\ ,
$$
while $$\Aa_\Qq(K_1)=r_1, \qquad \Aa_\Qq(K_2)= r_2\ .$$ 
Moreover, we
  can require that both $\Aa_\Qq(K_1)$ and $\Aa_\Qq(K_2)$ are realized by
  means of some dihedral quandle with cycle $\overline{z}=1$. 
\end{prop}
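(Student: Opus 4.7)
The plan is to construct $K_1$ and $K_2$ as connected sums of three types of genus--1 building blocks, exploiting the multiplicativity of the Alexander polynomial under connected sum (Lemma~\ref{additive2}) and the additivity of $a_\X(\cdot,z)$ (Lemma~\ref{additive}).

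Fix an odd prime $p$. By Lemmas~\ref{Kp} and~\ref{K'p}, the genus--1 knots $K_p$ and $K'_p$ satisfy $\Delta(K_p)\doteq \Delta(K'_p)$, $\Aa_\Qq(K_p) = a_{\Dd_p}(K_p,1) = 2$, and $\Aa_\Qq(K'_p) = a_{\Dd_p}(K'_p,1) = 1$. Let $K_0$ be a genus--1 knot with trivial Alexander polynomial, for instance the Whitehead double of the figure--eight knot (as in the proof of Proposition~\ref{AFvAg=1}); by Theorem~\ref{K=K}, $\Aa_\Qq(K_0) = 0$, and hence $a_\X(K_0,z)=0$ for every $\X\in\Qq_\Ff$ and every cycle $z$. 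For non--negative integers $a,b,c$, set
\[
K(a,b,c) := \underbrace{K_p + \cdots + K_p}_{a\ \text{copies}} + \underbrace{K'_p + \cdots + K'_p}_{b\ \text{copies}} + \underbrace{K_0 + \cdots + K_0}_{c\ \text{copies}}.
\]
Since genus is additive under connected sum, $g(K(a,b,c))=a+b+c$; Lemma~\ref{additive2} gives $\Delta(K(a,b,c))\doteq \Delta(K_p)^{a+b}$; and for any $\X\in\Qq_\Ff$ and any cycle $z$, Lemma~\ref{additive} together with the bounds $a_\X(K_p,z)\leq \Aa_\Qq(K_p)=2$ and $a_\X(K'_p,z)\leq 1$ yields $a_\X(K(a,b,c),z)\leq 2a+b$; taking $(\X,z)=(\Dd_p,1)$ produces equality, so $\Aa_\Qq(K(a,b,c))=2a+b$, realized by the dihedral quandle $\Dd_p$ with cycle $\overline z=1$.

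The problem thus reduces to finding non--negative integers $(a_i,b_i,c_i)$, $i=1,2$, satisfying
\[
a_i + b_i + c_i = g,\qquad 2a_i + b_i = r_i,\qquad a_1+b_1 = a_2+b_2,
\]
the last equation guaranteeing $\Delta(K(a_1,b_1,c_1))\doteq \Delta(K(a_2,b_2,c_2))$. Since $a_i+b_i=r_i-a_i$, matching is equivalent to $a_2=a_1+(r_2-r_1)$, forcing $b_i=r_i-2a_i$ and $c_i=g-r_i+a_i$; the non--negativity conditions become $\max\{0,\,r_i-g\}\leq a_i\leq r_i/2$. When $r_2$ is even, take $a_1=r_1-r_2/2$ and $a_2=r_2/2$, giving $(b_1,c_1)=(r_2-r_1,\,g-r_2/2)$ and $(b_2,c_2)=(0,\,g-r_2/2)$. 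When $r_2$ is odd (so $r_2\leq 2r_1-1$ and $r_2\leq 2g-1$, because $2r_1$ and $2g$ are even), take $a_1=r_1-(r_2+1)/2$ and $a_2=(r_2-1)/2$, giving $(b_1,c_1)=(r_2-r_1+1,\,g-(r_2+1)/2)$ and $(b_2,c_2)=(1,\,g-(r_2+1)/2)$. The hypotheses $1\leq r_1\leq r_2\leq 2r_1\leq 2g$ make all entries non--negative integers; setting $K_i:=K(a_i,b_i,c_i)$ concludes the proof.

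The only non--trivial step is the elementary bookkeeping in this last paragraph: the bound $r_2\leq 2r_1$ is precisely what secures $a_1\geq 0$, and $r_2\leq 2g$ is what secures $c_1,c_2\geq 0$. Everything else reduces to a direct application of the additivity lemmas and to the building blocks constructed in Section~\ref{dihedral}.
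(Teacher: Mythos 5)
Your proof is correct and follows essentially the same route as the paper's: connected sums of the three genus--1 building blocks with $\Aa_\Qq$ equal to $2$, $1$, $0$ (and matching or trivial Alexander polynomials), combined with the additivity statements of Lemma~\ref{additive2} and Lemma~\ref{additive}, with the upper bound from additivity met by the single dihedral quandle $\Dd_p$ at cycle $\overline{z}=1$. The only difference is the choice of multiplicities: the paper takes $K_1$ to be $r_1$ copies of $K'$ plus $g-r_1$ copies of $K''$, and $K_2$ to be $2r_1-r_2$ copies of $K'$, $r_2-r_1$ copies of $K$ and $g-r_1$ copies of $K''$, which solves your linear system uniformly and avoids the parity case split.
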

\begin{proof}
 Let $K,K'$ be the genus--1 knots provided by Corollary~\ref{diversi:cor},
and let $K''$ be a genus--1 knot with trivial Alexander polynomial
(see Proposition~\ref{AFvAg=1}. Then we may define $K_1$ as the sum
of $r_1$ copies of $K'$ and $g-r_1$ copies of $K''$, and 
$K_2$ as the sum of $2r_1-r_2$ copies of $K'$, $r_2-r_1$
copies of $K$ and $g-r_1$ copies of $K''$. The additivity of the genus
gives that $g(K_1)=g(K_2)=g$, and
Lemma~\ref{additive2} readily implies that $\Delta(K_1)(t)=\Delta(K_2)(t)$.
Moreover, by Lemma~\ref{additive} we have that 
$\Aa_\Qq(K_1)\leq r_1$ and $\Aa_\Qq(K_2)\leq r_2$. However, 
Corollary~\ref{diversi:cor} ensures that
there exists a dihedral quandle $\X$ such that $\Aa_\Qq(K)=a_\X (K,1)=2$ and $\Aa_\Qq(K')=a_\X(K',1)=1$, so by Lemma~\ref{additive}
$a_\X(K_1,1)=r_1$ and $a_\X (K_2,1)=r_2$, whence the conclusion. 
\end{proof}



\subsection{The case of links}\label{linkadd:sub}
Let $L=K_1\cup\ldots \cup K_h$ be a split link,
where $K_i$ is a knot for every $i=1,\ldots,h$. Let also
$\Pp_M$ be the maximal partition of $L$, let $\overline{z}\colon\Pp_M\to \N$
be a $\Pp_M$--cycle and 
set $z_i=\overline{z}(K_i)$.
The following Lemma is an immediate consequence of the definition
of quandle coloring:

\begin{lem}\label{link:add}
 For every quandle $\X\in\Qq_\Ff$ we have
$$
c_\X (L,\Pp_M,\overline{z})=\prod_{i=1}^h c_\X(K_i,z_i)\ ,
$$
so
$$
a_\X(L,\Pp_M,\overline{z})=\left(\sum_{i=1}^h a_\X (K_i,z_i)\right)+h-1\ ,
$$
and
$$
\Aa_\Qq (L,\Pp_M)\leq \left(\sum_{i=1}^h \Aa_\Qq (K_i)\right)+h-1\ .
$$
Moreover, if $h\geq 2$ then $\Delta(L)(t)=0$, so $\Aa(L)=0$.
\end{lem}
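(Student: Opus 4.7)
The plan is to exploit the splitness of $L$ in order to reduce the count of colorings to a product. Since $L$ is split, there is a diagram $D$ of $L$ that decomposes as a disjoint union $D = D_1 \sqcup \ldots \sqcup D_h$, where each $D_i$ is a diagram of $K_i$ lying in its own disk of the plane, and no arc of $D_i$ meets any arc of $D_j$ for $i\neq j$. Moreover, with respect to $\Pp_M$, every arc of $D_i$ inherits the label $z_i$.

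Since the coloring rule described in Figure \ref{quandle2} is entirely local at a crossing, and every crossing of $D$ lies inside some $D_i$, the linear system cutting out the space of $\X$-colorings of $(D,\Pp_M,\overline{z})$ splits as the disjoint union of the $h$ independent subsystems cutting out the $\X$-colorings of $(D_i,z_i)$. Therefore an $\X$-coloring of $(D,\Pp_M,\overline{z})$ is precisely an $h$-tuple of $\X$-colorings of the $(D_i,z_i)$'s, giving
$$c_\X(L,\Pp_M,\overline{z}) \;=\; \prod_{i=1}^h c_\X(K_i,z_i)\ .$$
Setting $q=|\F(p,h(t))|$ and recalling $c_\X(K_i,z_i)=q^{a_\X(K_i,z_i)+1}$, applying $\log_q$ yields
$$a_\X(L,\Pp_M,\overline{z}) + 1 \;=\; \sum_{i=1}^h \bigl(a_\X(K_i,z_i)+1\bigr)\ ,$$
which rearranges to the additive formula of the statement. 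Taking the supremum first over the $\Pp_M$-cycles $\overline{z}$ and then over $\X\in\Qq_\Ff$, and bounding the supremum of a sum by the sum of suprema (the constant $h-1$ passing through), produces the inequality $\Aa_\Qq(L,\Pp_M)\leq \sum_{i=1}^h \Aa_\Qq(K_i)+h-1$.

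The last assertion is the classical vanishing of the Alexander polynomial of a split link with $h\geq 2$ components (see e.g.~\cite{burde}). For instance, using a Seifert surface $\Sigma$ for $L$ obtained by connecting disjoint Seifert surfaces $\Sigma_1,\ldots,\Sigma_h$ of the $K_i$'s by $h-1$ bands, one obtains a geometric basis of $H_1(\Sigma)$ containing a class supported on the band region which, thanks to the splitting sphere, has zero linking number with every basis element. The corresponding row of $tS(L)-S(L)^T$ vanishes identically, hence $\Delta(L)(t)=\det(tS(L)-S(L)^T)=0$ and $\Aa(L)=0$. The only subtle step is this last geometric computation of the linking numbers, but it is entirely standard.
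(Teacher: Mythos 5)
Your proof is correct and is exactly the argument the paper has in mind: the paper states this Lemma as an immediate consequence of the definition of quandle coloring, and your split-diagram factorization (independent linear systems on the disjoint sub-diagrams, then $\log_q$, then sup of a sum bounded by sum of sups) spells that out. The only cosmetic point concerns the final classical fact: the disjoint Seifert surfaces should be joined by \emph{tubes} rather than boundary bands (so that $\partial\Sigma$ remains $L$), and one should observe that the splitting sphere kills both $\lk(\beta,\gamma^+)$ and $\lk(\gamma,\beta^+)$ for the belt circle $\beta$ of a tube, so that both the row and the column of $S(L)$ indexed by $\beta$ vanish and hence the corresponding row of $tS(L)-S(L)^T$ is zero --- but this is the standard cited fact in any case.
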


Just as in Lemma~\ref{additive}, the equality
$
\Aa_\Qq (L,\Pp_M)\leq \left(\sum_{i=1}^h \Aa_\Qq (K_i)\right)+h-1
$
does not hold in general.

Let now $K_0$ be a genus--1 knot such that $\Aa_\Qq(K_0)=2$ (see Section~\ref{g=1}
for examples of such knots), and let
$L_h$ be the split link having $h$ components, each isotopic to $K_0$.
The following result implies Proposition~\ref{nobounds}

\begin{prop}
We have 
$\Aa_\Qq(L_h)=3h-1$.
\end{prop}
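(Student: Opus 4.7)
The plan is to mimic the multiplicative argument of Lemma~\ref{link:add}, but to carry it out with the \emph{minimal} partition $\Pp_m$ of $L_h$ in place of $\Pp_M$; recall that by the convention of the Introduction, $\Aa_\Qq(L_h)$ means $\Aa_\Qq(L_h,\Pp_m)$.

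First I would observe that a $\Pp_m$--cycle on $L_h$ is determined by a single value $z\in\Z_{t_\X}$ assigned simultaneously to every component. Since $L_h$ is a split link, it admits a diagram $D=D_1\sqcup\ldots\sqcup D_h$ whose sub-diagrams lie in disjoint disks of the plane, with $D_i$ a diagram of $K_0$ and no crossings between distinct $D_i$'s. An $\X$--coloring of $(D,\Pp_m,z)$ then restricts to an independent $\X$--coloring of each $(D_i,z)$, and conversely any $h$-tuple of such colorings glues. Consequently
$$c_\X(L_h,\Pp_m,z)=\prod_{i=1}^h c_\X(K_0,z)=c_\X(K_0,z)^h,$$
which, upon taking $\log_q$ and subtracting $1$, yields
$$a_\X(L_h,\Pp_m,z)=h\cdot a_\X(K_0,z)+h-1.$$

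Now I would take the supremum over all quandles $\X\in\Qq_\Ff$ and all $\Pp_m$--cycles $z$ on both sides. Since the additive term $h-1$ and the multiplier $h>0$ are fixed,
$$\Aa_\Qq(L_h)=\sup_{\X,z}\bigl(h\cdot a_\X(K_0,z)+h-1\bigr)=h\cdot\Aa_\Qq(K_0)+h-1=2h+h-1=3h-1.$$
Concretely, the upper bound $\Aa_\Qq(L_h)\le 3h-1$ follows from $a_\X(K_0,z)\le\Aa_\Qq(K_0)=2$, while the lower bound is realized by choosing a pair $(\X_0,z_0)$ with $a_{\X_0}(K_0,z_0)=2$ (which exists by the hypothesis that $\Aa_\Qq(K_0)=2$) and using the same $(\X_0,z_0)$ for $L_h$.

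There is no real obstacle here: the only point that must be handled cleanly is the passage from the multiplicative statement over $\Pp_M$ in Lemma~\ref{link:add} to the desired one over $\Pp_m$, which works precisely because splitness of $L_h$ decouples the colorings whether or not the components carry a common label. Note that this shows the upper bound of Lemma~\ref{link:add} is in general \emph{not} tight (there one gets $\le 3h-1$ for $\Pp_M$ also, and equality is achieved simultaneously), and in particular that $\Aa_\Qq(L_h)=\Aa_\Qq(L_h,\Pp_M)$ for these examples.
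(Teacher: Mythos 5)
Your proof is correct and follows essentially the same route as the paper: both rest on the fact that splitness decouples the colorings of the components, so that $c_\X(L_h,\Pp_m,z)=c_\X(K_0,z)^h$ — the paper phrases this via Lemma~\ref{link:add} applied to the constant cycle $\overline{1}$, for which the $\Pp_m$- and $\Pp_M$-colorings coincide, and obtains the upper bound from $\Aa_\Qq(L_h)\le\Aa_\Qq(L_h,\Pp_M)$. The only quibble is your closing parenthetical: this example shows that the bound of Lemma~\ref{link:add} \emph{is} attained here, not that it fails to be tight (the paper's remark that equality "does not hold in general" refers to other links, where no single quandle and cycle realize all the $\Aa_\Qq(K_i)$ simultaneously).
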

\begin{proof}
We have $a_\X(K_0,1)=2$ for some quandle $\X\in\Qq_\Ff$, so Lemma~\ref{link:add}
implies that 
$$
\Aa_\Qq(L_h)\geq a_\X(L,\Pp_m,\overline{1})=a_\X(L,\Pp_M,\overline{1})=3h-1\ .
$$
\end{proof}

\begin{remark}
{\rm
Strictly speaking, the equality $\Aa_\Qq(L_h)=3h-1$ does not provide a sharp bound
on $g(L_h)$, since Corollary~\ref{lowerbound} states that
$\Aa_\Qq(L)\leq 2g(L)+2k-2$ for every $k$--component link. This inequality
provides the bound $2g(L_h)\geq h+1$, which is not sharp since
of course $g(L_h)=h$. However, it is immediate to see that if $L$ is a split link,
then $g(L)=g(L,\Pp_M)$. The inequalities
$$
3h-1=\Aa_\Qq(L_h)\leq \Aa_\Qq(L_h,\Pp_M)\leq 2g(L_h,\Pp_M)+h-1=2g(L_h)+h-1
$$
imply now $g(L_h)\geq h$.
} 
\end{remark}

\end{document}